\title{Well-posedness of parabolic equations\\ in the non-reflexive and anisotropic Musielak-Orlicz spaces\\ in the class of renormalized solutions}
\author[1]{Iwona Chlebicka\thanks{email address: iskrzypczak@mimuw.edu.pl}}
\author[1,2]{Piotr Gwiazda\thanks{email address: p.gwiazda@mimuw.edu.pl}}
 \author[2]{Anna Zatorska--Goldstein\thanks{email address: azator@mimuw.edu.pl\\ The research of I.C. is supported by NCN grant no. 2016/23/D/ST1/01072. The research of P.G. has been supported by the NCN grant  no. 2014/13/B/ST1/03094.  The research of A.Z.-G. has been supported by the NCN grant  no. 2012/05/E/ST1/03232 (years 2013 - 2017). This work was partially supported by the Simons - Foundation grant 346300 and the Polish Government MNiSW 2015-2019 matching fund.}}
\affil[1]{\small
Institute of  Mathematics, Polish Academy of Sciences, \newline
ul. \'{S}niadeckich 8, 00-656 Warsaw, Poland
}
\affil[2]{\small
Institute of Applied Mathematics and Mechanics,
University of Warsaw, \newline
ul. Banacha 2, 02-097 Warsaw, Poland
}
\newcommand{\barint}{
         \rule[.036in]{.12in}{.009in}\kern-.16in
          \displaystyle\int  }
\def\R{{\mathbb{R}}}
\def\r{{\mathbb{R}}}
\def\N{{\mathbb{N}}}
\def\rn{{\mathbb{R}^{N}}}
\def\rnt{{\mathbb{R}^{N+1}}}
\def\VTM{{V_T^M(\Omega)}}
\def\VTMi{{V_T^{M,\infty}(\Omega)}}
\newcommand{\sg}{{\mathrm{sgn}_0^+}}
\newcommand{\supp}{{\mathrm{supp}}}
\newcommand{\va} {\vec{a}}
\newcommand{\dep} {\delta}
\newcommand{\et} {d}
\def\rp{{\mathbb{R}_{+}}}
\def\ve{{\varepsilon}}
\def\vr{{\varrho}}
\def\vt{{\vartheta^{\tau,r}}}
\def\bt{{\beta^{\tau,r}}}
\def\s{{\sigma}}
\def\bn{{\bar{\nabla}}}
\def\dm{{\underline{m}}}
\def\Mjd{{ M_j^\delta}}
\def\Mss{{(\Mjd(\xi))^{**}}}
\def\Msdx{{(\Mjd(S_\delta(\xi(t,x)) ))^{**}}}
\def\Msd{{(\Mjd)^{**}}}
\def\Ak{{{\cal A }_{k}}}
\def\OT{{{\Omega_T}}}
\def\iOT{{\int_{\OT}}}
\def\iO{{\int_{\Omega}}}
\def\iQd{{\int_{Q_j^\delta\cap\Omega} }}
\def\Qd{{Q_j^\delta}}
\def\tQd{{\widetilde{Q}_j^\delta}}
\def\iQdn{{\int_{Q_j^\delta\cap\{x:\xi_\delta(x)\neq 0\}} }}
\newtheorem{theo}{\bf Theorem}[section]
\newtheorem{lem}{\bf Lemma}[section]
\newtheorem{rem}{\bf Remark}[section]
\newtheorem{defi}{\bf Definition}[section]
\newtheorem{prop}{\bf Proposition}[section]
\newcommand{\wt}{\widetilde}
\newcommand{\vp}{\varphi}
\newcommand{\dv}{\mathrm{div}}
\date{}
\begin{document}
\maketitle \sloppy

\thispagestyle{empty}


\parindent 1em

\begin{abstract}

We prove existence and uniqueness of renormalized solutions to   general nonlinear parabolic equation in~Musielak-Orlicz space avoiding growth restrictions. Namely, we consider \begin{equation*}
\partial_t u-\dv A(x,\nabla u)= f\in L^1(\OT),
\end{equation*}
on a Lipschitz bounded domain in $\rn$. The growth of the weakly monotone vector field $A$ is controlled by a generalized nonhomogeneous and anisotropic $N$-function  $M$. The approach does not require any particular type of growth condition of $M$ or its conjugate $M^*$ (neither $\Delta_2$, nor $\nabla_2$). The condition we impose on $M$ is continuity of log-H\"older-type, which results in good approximation properties of the space. However, the requirement of regularity can be skipped in the case of reflexive spaces. The proof of the main results uses truncation ideas, the Young measures methods and monotonicity arguments. Uniqueness results from the comparison principle.
\end{abstract}

\smallskip

  {\small {\bf Key words and phrases:}  existence of solutions, parabolic problems, Musielak-Orlicz spaces}

{\small{\bf Mathematics Subject Classification (2010)}:  35K55, 35A01. }
\newpage
\section{Introduction}


Our aim is to find a way of proving the existence and uniqueness of renormalized solutions to a strongly nonlinear parabolic equation with $L^1$-data under minimal restrictions on the growth of the leading part of the operator.  We investigate operators $A$, which are monotone, but not necessarily strictly monotone. The~modular function $M$, which controls the growth of the operator, is not assumed to be isotropic, i.e. $M=M(x,\xi)$ not only $M=M(x,|\xi|)$. In turn, we can expect different behaviour of $M(x,\cdot)$ in various directions. We \textbf{do not} require  $M\in\Delta_2$, nor $M^*\in\Delta_2$, nor~any particular growth of $M$, such as $M(x,\xi)\geq c|\xi|^{1+\nu}$ for $\xi>\xi_0$. In general, if the modular function has a growth of type far from being polynomial (e.g. exponential), it entails analytical difficulties and significantly restricts good properties of the space, such as separability or reflexivity, as well as admissible classical tools. In order to relax the conditions on the growth we require the log-H\"older-type regularity of the modular function (cf.~condition (M)), which can be skipped in  reflexive  spaces.

\smallskip

We study  the problem
\begin{equation}\label{intro:para}\left\{\begin{array}{ll}
\partial_t u-\dv A(x,\nabla u)= f(t,x) & \ \mathrm{ in}\  \OT,\\
u(t,x)=0 &\ \mathrm{  on} \ \partial\Omega,\\
u(0,x)=u_0(x) & \ \mathrm{ in}\  \Omega,
\end{array}\right.
\end{equation}
where $[0,T]$ is a finite interval, $\Omega$ is a bounded Lipschitz domain in $ \rn$, $\OT=(0,T)\times \Omega$, $N>1$, $f\in L^1(\OT)$, $u_0\in L^1(\Omega)$, within two classes of functions:
\[ \begin{split} \VTM  &=\{u\in L^1(0,T;W_0^{1,1}(\Omega)):\ \nabla u\in L_M(\OT;\rn)\},\\
 \VTMi 
 & =\VTM \cap L^\infty(0,T; L^2(\Omega)).\end{split}\]
The space $L_M$ (Definition~\ref{def:MOsp}) is equipped with the modular function $M$ being an $N$-function (Definition~\ref{def:Nf}) controlling the growth of $A$. 
 
We consider $A$~belonging to an Orlicz class with respect to the second variable. Namely, we assume that function $A:\Omega\times\rn\to\rn$  satisfies the following conditions.
\begin{itemize}
\item[(A1)] $A$ is a Carath\'eodory's function.
\item[(A2)] There exists an $N$-function  $M:\Omega\times\rn\to\r$ and a constant $c_A\in(0,1]$ such that for all $\xi\in\rn$ we have
\[A(x,\xi)\xi\geq c_A\left(M(x,\xi)+M^*(x,A(x,\xi))\right),\]
where $M^*$ is conjugate to $M$ (see Definition~\ref{def:conj}).
\item[(A3)] For all $\xi,\eta\in\rn$ and $x\in\Omega$ we have 
\[(A(x,\xi) - A(x, \eta)) \cdot (\xi-\eta)\geq 0.\]
\end{itemize}

Unlike other studies of the Musielak-Orlicz spaces e.g.~\cite{gwiazda-ren-ell,gwiazda-ren-para,hhk,mmos2013} instead of growth conditions we assume  regularity of~$M$.
\begin{itemize} 
\item[(M)]  Let us consider a family of $N$-dimensional cubes   covering the set $\Omega$. Namely, a~family $\{\Qd\}_{j=1}^{N_\delta}$ consists of closed cubes of edge $2\delta$, such that  $\mathrm{int}\Qd\cap\mathrm{int} Q^\delta_i=\emptyset$ for $i \neq j$ and $\Omega\subset\bigcup_{j=1}^{N_\delta}\Qd$. Moreover, for each
cube $\Qd$ we define the cube $\tQd$ centered at the same point and with parallel corresponding edges of length $4\delta$.   Assume that there exist constants $a,c,\delta_0 >0$, such that for all $\delta<\delta_0$, $x\in\Qd$ and all $\xi\in\rn$ we have
\begin{equation}
\label{M2}  
\frac{M(x,\xi )}{\Mss} 
\leq c \left(1+ |\xi|^{-\frac{a}{\log(3\delta\sqrt{N})}} \right),
\end{equation}
where \begin{equation}
\label{Mjd}\Mjd(\xi ):= \inf_{x\in \widetilde{Q}_j^\delta\cap\Omega}M(x,\xi ),
\end{equation} 
while $\Mss=({(\Mjd(\xi))}^*)^* $ is the greatest convex minorant of $\Mjd(\xi )$ (coinciding with the~second conjugate cf. Definition~\ref{def:conj}).

Moreover, suppose that for every measurable set $G\subset\Omega$ and every $z\in\rn$ we have
\begin{equation}
\label{ass:M:int}\int_G M(x,z)dx<\infty.
\end{equation}
\end{itemize}
 In further parts of the introduction we describe the cases when the above condition is not necessary. Let us only point out that to get (M) in the isotropic case, i.e. when we consider $M(x,\xi)=M(x,|\xi|)$, it suffices to assume log-H\"older-type condition with respect to $x$, namely \eqref{M2'},  cf.~Lemma~\ref{lem:Mass}.

We apply the truncation techniques. Let truncation $T_k(f)(x)$ be defined as follows\begin{equation}T_k(f)(x)=\left\{\begin{array}{ll}f & |f|\leq k,\\
k\frac{f}{|f|}& |f|\geq k.
\end{array}\right. \label{Tk}
\end{equation}

We call a function $u$ a renormalized solution to~\eqref{intro:para}, when it satisfies the following conditions.\begin{itemize}
\item[(R1)] $u\in L^1(\OT)$ and for each $k>0$  \[T_k(u)\in \VTM  ,\qquad A(\cdot,\nabla T_k(u))\in L_{M^*}(\Omega;\rn).\]
\item[(R2)] For every $h\in C^1_0(\R)$ and all $\varphi\in \VTMi$, such that $\partial_t\vp\in L^\infty(\Omega_T)$ and $\vp(\cdot,x)$ has a compact support in $[0,T)$ for a.e. $x\in\Omega$, we have
\begin{equation*}
-\int_{\OT} \left(\int_{u_0(x)}^{u(t,x)}h(\sigma)d\sigma\right)  \partial_t \vp\ dx\,dt+\int_{\OT}A(x,\nabla u)\cdot\nabla(h(u)\vp) \,dx\,dt=\int_{\OT}f h(u)\vp \,dx\,dt.
\end{equation*}
\item[(R3)] $\displaystyle{\int_{ \{l<|u|<l+1\}}A(x,\nabla u)\cdot\nabla u\, dx\,dt\to 0}$ as $l\to\infty$.
\end{itemize} 

Our main result reads as follows.
\begin{theo}\label{theo:main} Suppose $[0,T]$ is a finite interval, $\Omega$ is a bounded Lipschitz domain in $ \rn$,   $N>1$, \\ $f\in L^1(\OT)$, $u_0\in L^1(\Omega)$. Let an $N$-function $M$ satisfy assumption (M) and function $A$ satisfy assumptions (A1)-(A3). Then there exists the unique renormalized weak solution to the problem~\eqref{intro:para}. 
 Namely, there exists $u \in \VTM$, which satisfies (R1)-(R3). 
\end{theo}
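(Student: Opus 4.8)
The strategy is the classical three-step scheme for renormalized solutions with $L^1$-data, adapted to the Musielak--Orlicz setting: (i) regularize the data and the operator to obtain approximate solutions in a reflexive/nice space, (ii) pass to the limit using a priori estimates, truncations and Young measures to identify the limit of $A(x,\nabla u_n)$, and (iii) prove uniqueness via the comparison principle. I would first approximate $f$ by $f_n\in L^\infty(\OT)$ with $f_n\to f$ in $L^1(\OT)$, $u_0$ by $u_{0,n}\in L^2(\Omega)$ with $u_{0,n}\to u_0$ in $L^1(\Omega)$, and — because $M$ may fail $\Delta_2$ and $\nabla_2$ — truncate the $N$-function itself, replacing $M$ by $M_n$ of (say) polynomial growth that increases to $M$, with correspondingly truncated operators $A_n$ satisfying (A1)--(A3) relative to $M_n$. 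For each $n$ the problem lives in a reflexive Orlicz--Sobolev space and admits a weak energy solution $u_n\in\VTMi$ (via Galerkin or a monotone-operator surjectivity argument, together with the standard regularization in time to make $\partial_t u_n$ meaningful).

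Next come the a priori estimates. Testing the $n$-th equation with $T_k(u_n)$ and using (A2) gives, uniformly in $n$, a bound on $\int_{\OT}\big(M_n(x,\nabla T_k(u_n))+M_n^*(x,A_n(x,\nabla T_k(u_n)))\big)$ by $Ck$, hence boundedness of $\{\nabla T_k(u_n)\}_n$ in $L_M(\OT;\rn)$ and of $\{A_n(\cdot,\nabla T_k(u_n))\}_n$ in $L_{M^*}(\OT;\rn)$ (both modular-bounded uniformly in $n$, for each fixed $k$). Standard $L^1$-theory arguments (à la Bénilan--Boccardo--Gallouët--Gariepy--Pierre--Vázquez) give $u_n\to u$ a.e. and in $C([0,T];L^1(\Omega))$, $T_k(u_n)\to T_k(u)$ a.e., and the smallness of the energy on the level set $\{l<|u_n|<l+1\}$ uniformly in $n$ (the crucial estimate behind (R3)). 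Passing to weak-$*$ limits in the appropriate modular topologies yields candidate limits $\nabla T_k(u)$ and $\overline{A_k}\in L_{M^*}$; the heart of the matter is to show $\overline{A_k}=A(x,\nabla T_k(u))$ and that (R2) holds. Here I would use the time-regularization of the truncations (Landes-type mollification $(T_k(u))_\mu$ in time) to produce admissible test functions, combine it with the monotonicity trick of (A3), and — since $M$ need not be doubling — invoke the Young-measure / biting-lemma machinery together with the modular density/approximation results that assumption (M) and the log-Hölder-type inequality \eqref{M2} are designed to deliver (this is exactly where \cite{gwiazda-ren-ell,gwiazda-ren-para} and the approximation lemmas of the present paper enter). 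The monotonicity argument then upgrades weak convergence of $A_n(\cdot,\nabla T_k(u_n))$ to identification of the limit and, via Minty-type reasoning, gives $\overline{A_k}=A(x,\nabla T_k(u))$; condition (R3) follows from the uniform level-set energy estimate. Renormalization (R2) is obtained by passing to the limit in the regularized equation tested with $h(u_n)\varphi$ after the time-mollification of $u_n$ is removed.

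Finally, uniqueness: given two renormalized solutions $u_1,u_2$, one uses in (R2) a test function built from $T_k(u_1-u_2)$ (suitably time-regularized, with an $h$ that is $\equiv1$ on a large interval and whose support is sent to infinity), exploits (R3) to kill the "remainder" terms coming from $h'$, uses monotonicity (A3) to drop the diffusion term with a sign, and lets $k\to0$ after normalizing, yielding $u_1=u_2$ — this is the comparison-principle argument advertised in the abstract.

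\textbf{Main obstacle.} The genuinely hard step is the identification of the limit of $A_n(\cdot,\nabla T_k(u_n))$ and the passage to the limit in the nonlinear term without any $\Delta_2/\nabla_2$ condition: the modular topology on $L_M$ is not the dual of a nice space, weak compactness is replaced by the biting lemma / Young measures, and admissible test functions must be produced by approximation that respects the modular — this is precisely why regularity (M) and the scale-sensitive estimate \eqref{M2} are imposed, and making the mollification-in-time of $T_k(u_n)$ interact correctly with the $x$-dependent, anisotropic modular is the delicate technical core. A secondary difficulty is the double approximation (in the data and in $M$ itself) and keeping all estimates uniform as $n\to\infty$ so that both limits can be taken.
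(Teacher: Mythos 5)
Your overall skeleton (truncated data, a priori estimates via testing with truncations, Landes-type time mollification of $T_k(u)$, the Minty monotonicity trick, Young measures and the biting lemma for the weak $L^1$ convergence of the energies, comparison principle for uniqueness) coincides with the paper's strategy. However, two steps as you state them contain genuine gaps precisely at the points where the setting of this paper departs from the classical one.

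First, you dispose of the almost everywhere convergence of $u_n$ by invoking ``standard $L^1$-theory arguments (\`a la B\'enilan--Boccardo--Gallou\"et--\dots)'' yielding $u_n\to u$ a.e.\ and even in $C([0,T];L^1(\Omega))$. Those arguments rest on compactness in space-time (Sobolev embeddings, Rellich--Kondrachov, Aubin--Lions), and the whole point of the present framework is that none of these is available: $M$ is anisotropic, $x$-dependent, without $\Delta_2/\nabla_2$ and without any lower polynomial bound, so no embedding of $\VTM$ into a compact scale is at hand. The paper replaces this step by a structural argument: it proves the comparison principle (Proposition~\ref{prop:comp-princ}) first, introduces asymmetric truncations $T^{k,l}$ of the data, orders the corresponding solutions $u^{k,l}$, and obtains the a.e.\ limit of $u_n$ as a monotone double limit (Step 5, Proposition~\ref{prop:contr:rad}). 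Without this (or an equivalent substitute) your scheme has no pointwise convergence of $u_n$, which is needed downstream for identifying the Young measure, for $h(u_n)\to h(u)$, and for (R2); the $C([0,T];L^1)$ convergence you assert is neither proved nor needed. Second, your regularization replaces $M$ by a polynomial-growth $M_n\nearrow M$ together with ``correspondingly truncated operators $A_n$ satisfying (A1)--(A3) relative to $M_n$''. It is not explained how to manufacture such $A_n$ from a general monotone $A$ so that the coercivity--growth condition (A2) holds with respect to $M_n$ with constants uniform in $n$, nor how bounds in $L_{M_n^*}$ for $A_n(\cdot,\nabla T_k(u_n))$ produce a weak-$*$ limit in $L_{M^*}$; this double approximation is exactly the ``secondary difficulty'' you flag but do not resolve. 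The paper avoids it altogether: $M$ and $A$ are left untouched and the operator is perturbed by $\theta\bar\nabla m(\xi)$ with $m$ growing essentially \emph{more} rapidly than $M$, so that gradients of the regularized solutions lie in $L_m\subset E_M$ (making the duality pairings meaningful and allowing the existence result of Elmahi--Meskine to be applied), after which $\theta\to 0$ is handled by the monotonicity trick. Your uniqueness sketch is fine in spirit --- it is essentially the comparison-principle argument the paper formalizes --- but as written the existence part needs the two repairs above.
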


\begin{rem}[cf.~\cite{martin}] \rm When the modular function has a special form we can simplify our assumptions. In the case of $M(x,\xi)=M(x,|\xi|)$, via Lemma~\ref{lem:Mass}, we replace condition (M) in the above theorem by log-H\"older continuity of M, cf.~\eqref{M2'}. If $M$ has a form 
\[M(x,\xi)=\sum_{i=1}^jk_i(x)M_i(\xi)+M_0(x,|\xi|),\quad j\in\N,\]
instead of~whole (M) we assume only that $M_0$ is log-H\"older continuous~\eqref{M2'}, all $M_i$ for $i=1,\dots,j$ are $N$-functions and all $k_i$ are nonnegative and satisfy $\frac{k_i(x)}{k_i(y)}\leq C_i^{\log \frac{1}{|x-y|}}$ with $C_i>0$ for $i=1,\dots,j$.
\end{rem} 
 
 Our framework admitts the following examples.\begin{itemize}
\item  $M(x,\xi)=|\xi|^{p(x)}$ with log-H\"older $p:\Omega\to[p^-,p^+]$, where $p^-=\inf_{x\in\Omega}p(x)>1$ and $p^+=\sup_{x\in\Omega}p(x)<\infty,$ then $V_0^M=  W_0^{1,p(\cdot)}(\Omega)$ and we admitt $A(x,\xi)=|\xi|^{p(x)-2}\xi$ ($p(\cdot)$-Laplacian case) as well as $A(x,\xi)=\alpha(x)|\xi|^{p(x)-2}\xi$   with $0<<\alpha(x)\in L^\infty(\Omega)\cap C(\Omega);$
\item $M(x,\xi)=\sum_{i=1}^N|\xi_i|^{p_i(x)}$, where $\xi=(\xi_1,\dots,\xi_N)\in\rn$, log-H\"older functions $p_i:\Omega\to[p_i^-,p_i^+]$, $i=1,\dots,N$, where $p_i^-=\inf_{x\in\Omega}p_i(x)>1$ and $p_i^+=\sup_{x\in\Omega}p_i(x)<\infty,$ then $V_0^M= W_0^{1,\vec{p}(\cdot)}(\Omega)$ and we admitt  \[A(x,\xi)=\sum_{i=1}^N\alpha_i(x)|\xi_i|^{p_i(x)-2}\xi\quad\text{ with }\quad 0<<\alpha_i(x)\in L^\infty(\Omega)\cap C(\Omega).\]
\end{itemize} 
Our assumption that $M,M^*$ are $N$-functions (Definition~\ref{def:Nf})  in the variable exponent setting   restrict us to the case of $1<p_-\leq p(x)\leq p^+<\infty$.

\subsubsection*{State of art}

The problems like~\eqref{intro:para} are very well understood, when $A$ is independent of the spacial variable and has a polynomial growth. In~particular, there is vast literature for analysis of the case involving the $p$-Laplace operator $A(x,\xi)=|\xi|^{p-2}\xi$ and  problems  stated in the Lebesgue space setting (the modular  function is then $M(x,\xi)=|\xi|^p$). There is a wide range of directions in~which the polynomial growth case has been developed including the variable exponent, Orlicz, weighted and double-phase spaces. The Musielak-Orlicz spaces, which include in particular all of the mentioned types of spaces,  have been studied systematically starting from~\cite{Musielak,Sk1,Sk2}. Investigations of nonlinear boundary value problems in~non-reflexive Orlicz-Sobolev-type setting was initiated by Donaldson~\cite{Donaldson} and continued by Gossez~\cite{Gossez2,Gossez3,Gossez}. For a~summary of~the~results we refer to~\cite{Mustonen} by Mustonen and Tienari. The generalization to~the~case of~vector Orlicz spaces with the anisotropic modular  function,  but independent of spacial variables was investigated in~\cite{Gparabolic}. Let us note that  the variable exponent Lebesgue spaces (when $M(x,  \xi ) = | \xi |^{p(x)}$  with $1 < p_{\rm min} \leq p(x) \leq  p^{\rm max} < \infty $), as well as~weighted or the double phase space (when $M(x,  \xi ) = | \xi |^{p}+a(x)|\xi|^q$  with $a\geq 0$ and $1 < p,q < \infty $) are still reflexive, which help significantly in approximation properties of the space via Mazur's Lemma. Unlike them the generalised Musielak-Orlicz spaces in~general fail in~being reflexive. We aim in~providing the existence result also in this non-relfexive cases. Our approach essentially involves the theory arising from fluids mechanics~\cite{gwiazda-non-newt,gwiazda-tmna,gwiazda2,Aneta}. Let us indicate that these papers provide many facts useful in~analysis of Musielak--Orlicz spaces. For other recent developments of~the framework of the spaces let us refer e.g. to~\cite{hhk,hht,mmos:ap,mmos2013}.

Partial differential equations with the right-hand side in $L^1$ received special attention. DiPerna and Lions investigating the~Boltzmann equation in order to deal with this challenge introduced the notion of renormalized solutions in~\cite{diperna-lions}. Other seminal ideas for problems with $L^1$-data comes from~\cite{bgSOLA}, where the solution is obtained as a limit of approximation, and~\cite{dall}, where entropy solutions are studied. Let us stress that the mentioned notions coincides. In~\cite{DrP} the authors show equivalence between entropy and renormalised solutions for problems with polynomial growth. Meanwhile, the corresponding result in the variable exponent and the Orlicz settings are provided together with the proofs of~the existence of renormalized solutions in~\cite{ZZ,zhang17}, respectively.

 In the elliptic setting the foundations of the studies on renormalized solutions, providing results for operators with polynomial growth, were laid by Boccardo et.~al.~\cite{boc-g-d-m},  Dall'Aglio~\cite{dall} and Murat~\cite{murat}. In the parabolic setting, renormalized solutions were studied first in~\cite{B,BMR99,BR,BR98,boc-ors1} and further~\cite{BMR,DP,DrP,P,PPP}. These studies are continued under weaker assumptions on the data~\cite{BPR13,BCM,DMOP}. Lately, generalising the setting, renormalized solutions to parabolic problems have been considered in the variable exponent setting~\cite{BWZ,LG,ZZ} and in the model of thermoviscoelasticity~\cite{ch-o}. For very recent results on entropy and renormalised solutions, we refer also to~\cite{ch-o,F15,MMR17,zhang17}. This issue in~parabolic problems in non-reflexive Orlicz-Sobolev spaces are studied in~\cite{HMR,MMR17,R10,zhang17}, while in the nonhomogeneous and non-reflexive Musielak-Orlicz spaces in~\cite{gwiazda-ren-para} (under certain growth conditions on the modular function).
 
\subsubsection*{Approximation in Musielak-Orlicz spaces} 
The highly challenging part of analysis in the general Musielak-Orlicz spaces is giving a relevant structural condition implying approximation properties of the space. However, we are equipped not only with the weak-* and strong topology of the gradients, but also with  the modular topology.

 In the mentioned existence results even in the case, when the growth conditions imposed on~the~modular  function were given by a~general $N$-function, besides the growth condition on~$M^*$, also $\Delta_2$-condition on $M$ was assummed (which entails separability of~$L_{M^*}$, see~\cite{Aneta}). It results further in density of smooth functions in $L_M$ with respect to the weak-$*$ topology. In the case of~classical Orlicz spaces, the crucial density result was provided by Gossez~\cite{Gossez}, improved for the vector Orlicz spaces in~\cite{Gparabolic}. However, the case of $x$--dependent log-H\"{o}lder continuous modular functions was claimed to cover first in~\cite{BenkiraneDouieb}, the proof involved an essential gap. In~\cite[(31)]{BenkiraneDouieb} the Jensen inequality is used for the infimum of convex functions, which obviously is not necessarily convex. We fix the proof in the elliptic case in~\cite[Theorem~2.2]{pgisazg1} and in the parabolic case in Theorem~\ref{theo:approx} below, changing slightly assumptions.
 
 The Musielak-Orlicz spaces equipped with the modular function satisfying $\Delta_2$-condition (cf.~Definition~\ref{def:D2}) have strong properties, nonetheless there is a vast range of $N$-functions not satisfying it, which we want to cover, e.g.
\begin{itemize}
\item  $M(x,\xi)=a(x)\left( \exp(|\xi|)-1+|\xi|\right)$;
\item  $M(x,\xi)= |\xi_1|^{p_1(x)}\left(1+|\log|\xi||\right)+\exp(|\xi_2|^{p_2(x)})-1$, when $(\xi_1,\xi_2)\in\R^2$ and $p_i:\Omega\to[1,\infty]$. It is a model example to imagine what we mean by anisotropic modular function. 
\end{itemize}
 
Let us discuss our assumption (M). First we shall stress that it is applied only in~the~proof of~approximation result (Theorem~2.2). When we deal with the space equipped with the approximation properties, we can simply skip (M). Namely, this is the case e.g. of the following modular functions:
\begin{itemize}
\item $M(x,|\xi|)=|\xi|^p+a(x)|\xi|^q$, where $1\leq p<q<\infty$ and function $a$ is nonnegative a.e. in $\Omega$ and $a\in L^\infty(\Omega)$, covering the celebrated case of the double-phase spaces~\cite{min-double-reg};
\item $M(x,|\xi|)=|\xi|^{p(x)}+a(x)|\xi|^{q(x)}$, where $1<< p<q<<\infty$ and function $a$ is nonnegative a.e. in $\Omega$ and $a\in L^\infty(\Omega)$, covering the weighted and double-phase variable exponent case;
\item $M(x,\xi)=M_1(\xi)+a(x)M_2(\xi)$, where $M_1,M_2$ satisfy conditions  $\Delta_2$ and $\nabla_2$, moreover a~function $a$ is nonnegative a.e. in $\Omega$ and $a\in L^\infty(\Omega)$.
\end{itemize}
In the  above cases (and in the case of any other reflexive space) whenever in the proof we apply an approximation by a sequence of smooth functions converging modularly, provided in~Theorem~\ref{theo:approx}, we can use instead a strongly converging  affine combination of the weakly converging sequence (ensured in   reflexive Banach spaces via Mazur's Lemma). Indeed, even if we do not deal with modular density of smooth functions, due to our approximation scheme the solution is in strong closure of smooth functions.

In the variable exponent case typical assumption resulting in approximation properties of the space is log-H\"older continuity of the exponent. In the isotropic case (when $M(x,\xi)=M(x,|\xi|)$) 
Lemma~\ref{lem:Mass} shows that to get (M), it suffices to   impose on $M$ continuity condition of log-H\"older-type with respect to $x$, namely for each $\xi\in\rn$ and $x,y,$ such that $|x-y|<\frac{1}{2}$ we have\begin{equation}
\label{M2'} \frac{M(x,\xi)}{M(y,\xi)}\leq\max\left\{ |\xi|^{-\frac{a_1}{\log|x-y|}}, b_1^{-\frac{a_1}{\log|x-y|}}\right\},\ \text{with some}\ a_1>0,\,b_1\geq 1.
\end{equation}  Note that condition~\eqref{M2'} 
 for $M(x,\xi)=|\xi|^{p(x)}$ relates to the log-H\"older continuity condition for the variable exponent $p$, namely there exists $a>0$, such that for $x,y$ close enough and  $|\xi|\geq 1$
\[|p(x)-p(y)|\leq \frac{a}{\log\left(\frac{1}{|x-y|}\right)}.\]
Indeed, whenever $|\xi|\geq 1$
\[ \frac{M(x,\xi)}{M(y,\xi)}= \frac{|\xi|^{p(x)}}{|\xi|^{p(y)}}=|\xi|^{p(x)-p(y)}\leq |\xi|^\frac{a}{\log\left(\frac{1}{|x-y|}\right)}=|\xi|^{-\frac{a}{\log {|x-y|} }}.\]

  There are several types of understanding generalisation of~log-H\"older continuity to the case of~general $x$-dependent isotropic modular functions (when $M(x,\xi)=M(x,|\xi|)$). The important issue is the interplay between types of continuity with respect to each of the variables separately. Besides our condition~\eqref{M2'} (sufficient for (M) via Lemma~\ref{lem:Mass}), we refer to the approaches of~\cite{hhk,hht} and~\cite{mmos:ap,mmos2013}, where the authors deal with the modular function of the form $M(x,\xi)=|\xi|\phi(x,|\xi|)$. We proceed without their doubling assumptions ($\Delta_2$). Since we are restricted to bounded domains, condition $\phi(x,1)\sim 1$ follows from our definition of $N$-function (Definition~\ref{def:Nf} ). As for the types of continuity,  in~\cite{mmos:ap,mmos2013} the authors restrict themselves to the case when $\phi(x,|\xi|)\le c \phi(y,|\xi|)$ when $|\xi|\in [1,|x-y|^{-n}].$ This condition implies~\eqref{M2'} and consequently~(M). Meanwhile in~\cite{hhk,hht}, the proposed condition yields  $\phi(x, b|\xi|)\le \phi(y,|\xi|)$ when $\phi(y,|\xi|)\in [1, |x-y|^{-n}],$ which does not imply~\eqref{M2'} directly. However, we shall mention that all three conditions are of the same spirit and balance types of continuity with respect to each of the variables separately.

\subsubsection*{Our approach}
Studying the problem~\eqref{intro:para} we face the challenges resulting from the lack of the growth conditions and handling with general $x$-dependent and anisotropic $N$-functions. The space we deal with is, in~general, neither separable, nor reflexive.  Lack of~precise control on the growth of~the  leading part of the operator, together with the low integrability  of~the~right-hand side triggers noticeable difficulties in studies on convergence of approximation. Note that the growth of the leading part of the operator is naturally driven by growth-coercivity condition~(A2). It is explained in detail in preliminaries of~\cite{pgisazg1} concerning the elliptic case.

Our methods  employ the framework developed in~\cite{pgisazg1,gwiazda-ren-ell,gwiazda-ren-cor,gwiazda-ren-para} studying elliptic and parabolic problems.   
Resigning from imposing $\Delta_2$-condition on the conjugate of the modular function  complicates understanding of the dual pairing. As a~further consequence of relaxing growth condition on the modular function, we cannot use classical results, such as the Sobolev embeddings,~the~Rellich-Kondrachov compact embeddings, or the~Aubin-Lions Lemma. Unlike the other studies, we put regularity restrictions on the modular function instead of the growth conditions, which however can be skipped in the case of reflexive spaces. From this point of view our paper is a natural continuation of elliptic approach of~\cite{pgisazg1}. On the other hand, current research involves   essential new ideas. Due to the appearance of the evolution term, a challenging part is the proof of the integration-by-parts formula (Lemma~\ref{lem:intbyparts}). The identification of the limit of $A(x,\nabla T_k(u_n))$ is much more complicated than in the elliptic setting. Moreover, to by-pass the~Aubin-Lions Lemma from the corresponding study~\cite{gwiazda-ren-para} involving growth conditions used to get almost everywhere convergence of the solutions to the truncated problem, we provide the comparison principle. We apply it twice: to get the mentioned almost everywhere convergence  and to obtain uniqueness.

Above we stress how demanding is approximation in the general Musielak-Orlicz spaces. In~Theorem~\ref{theo:approx} we provide a parabolic version of approximation result of~\cite{pgisazg1} using ideas of~\cite{ASGcoll}. Theorem~\ref{theo:approx} is a key tool in the proof of weak renormalised formulation given in Lemma~\ref{lem:intbyparts}. We provide the comparison principle in Proposition~\ref{prop:comp-princ}. The main goal, i.e. the existence of renormalized solutions to  general nonlinear parabolic equation, is given in Theorem~\ref{theo:main} above and proven in Section~\ref{sec:mainproof}. 

Let us summarize the scheme of the main proof contained in Sections~\ref{sec:constr} and~\ref{sec:mainproof}. The first step is to show existence of weak solutions to the bounded regularized problem (Proposition~\ref{prop:reg-bound}), while in the second step we prove existence of weak solutions $u_n$ to the non-regularized problem with bounded data (Proposition~\ref{prop:bound}) using  the Minty-Browder monotonicity trick.  The third step is establishing certain types of convergence of truncations of a solution $T_k(u_n)$ (Proposition~\ref{prop:convTk}). The fourth step is devoted to the radiation control condition relating to (R3), but for $u_n$ (Proposition~\ref{prop:contr:rad:n}). In~the~fifth one the comparison principle is applied to obtain almost everywhere convergence of $u_n$. In step~6 we localize $A(x,\nabla T_k(u))$ as the weak-* limit in $L_{M^*}$ of~$A(x,\nabla T_k(u_n))$ (Proposition~\ref{prop:convTkII}), involving the methods of~\cite{ammar,gwiazda-ren-ell,gwiazda-ren-para,Landes-meth} and the monotonicity trick. Section~\ref{sec:mainproof} finally concludes the proof of~existence of renormalized solutions. We motivate weak $L^1$-convergence of $A(x,\nabla T_k(u_n))\cdot \nabla T_k(u_n)$ via the Young measures' method. 

In the end we include appendices providing basic definitions, auxiliary results, fundamental theorems, proofs of approximation result, and weak renomalised formulation.

\section{Preliminaries}
  
In this section we give only the general preliminaries concerning the setting.  We assume $\Omega\subset\rn$ is a bounded Lipschitz domain, $\Omega_T=(0,T)\times\Omega,$ $\Omega_\tau=(0,\tau)\times\Omega.$ If $V\subset\R^K$, $K\in\N$, is a bounded set, then $C_c^\infty(V)$ denote the class of smooth functions with support compact in $V$.  We denote positive part of function signum by $\sg(s)=\max\{0,s/|s|\}$.

\subsubsection*{Classes of functions}
 
\begin{defi}\label{def:MOsp} Let $M$ be an $N$-function (cf.~Definition~\ref{def:Nf} in Appendix~\ref{ssec:Basics}).\\ We deal with the three  Orlicz-Musielak classes of functions.\begin{itemize}
\item[i)]${\cal L}_M(\Omega;\rn)$  - the generalised Orlicz-Musielak class is the set of all measurable functions\\ $\xi:\Omega\to\rn$ such that
\[\int_\Omega M(x,\xi(x))\,dx<\infty.\]
\item[ii)]${L}_M(\Omega;\rn)$  - the generalised Orlicz-Musielak space is the smallest linear space containing ${\cal L}_M(\Omega;\rn)$, equipped with the Luxemburg norm 
\[||\xi||_{L_M}=\inf\left\{\lambda>0:\int_\Omega M\left(x,\frac{\xi(x)}{\lambda}\right)\,dx\leq 1\right\}.\]
\item[iii)] ${E}_M(\Omega;\rn)$  - the closure in $L_M$-norm of the set of bounded functions.
\end{itemize}
\end{defi}
Then 
\[{E}_M(\Omega;\rn)\subset {\cal L}_M(\Omega;\rn)\subset { L}_M(\Omega;\rn),\]
the space ${E}_M(\Omega;\rn)$ is separable and $({E}_M(\Omega;\rn))^*=L_{M^*}(\Omega;\rn)$, see~\cite{gwiazda-non-newt,Aneta}.

Under the so-called $\Delta_2$-condition (Definition~\ref{def:D2}) we would be equipped with stronger tools. Indeed,  if $M\in\Delta_2$, then
\[{E}_M(\Omega;\rn)= {\cal L}_M(\Omega;\rn)= {L}_M(\Omega;\rn)\]
and $L_M(\Omega;\rn)$ is separable. When both  $M,M^*\in\Delta_2$, then $L_M(\Omega;\rn)$ is separable and reflexive, see~\cite{GMWK,gwiazda-non-newt}. We face the problem \textbf{without} this structure.

\begin{rem} Definition~\ref{def:Nf} (see points 3 and 4)  implies 
$\lim_{|\xi|\to \infty}\inf_{x\in\Omega}\frac{M^*(x,\xi)}{|\xi|}=\infty$ and 
  $\inf_{x\in\Omega}M^*(x,\xi)>0$ for any $\xi\neq 0$. Then, consequently, Lemma~\ref{lem:M*<M} ensures 
\begin{equation}
\label{LinfinEM}L^\infty(\Omega;\rn) \subset E_M(\Omega;\rn)\qquad\text{and}\qquad L^\infty(\OT;\rn) \subset E_M(\OT;\rn).\end{equation} 
\end{rem} 

\subsubsection*{Auxiliary functions}
We list here special forms of auxiliary functions used in the proofs.

 Let $\psi_l:\R\to\R$ be given by
\begin{equation}
\label{psil}\psi_l(s):=\min\{(l+1-|s|)^+,1\}.
\end{equation} 

Let two-parameter family of functions $\vt:\R\to\R$ be defined by  
\begin{equation}
\label{vt}\vt(t):=\left(\omega_r * \mathds{1}_{[0,\tau)}\right)(t),
\end{equation}where $\omega_r$ is a standard regularizing kernel, that is $\omega_r\in C_c^\infty(\R)$, $\supp\,\omega_r\subset (-r,r)$. Note that $\supp\vt=[-r,\tau+r).$ In particular, for every $r$ there exists $r_\tau$, such that for all $r<r_\tau$ we have $\vt\in C_c^\infty([0,T))$. 

We consider a one-parameter family of nonincreasing functions $\phi_{r}\in C_c^\infty([0,T))$ given by
\begin{equation}
\label{phidelta}\phi_{r}(t):=\left\{\begin{array}{ll}
0& \text{for }t\in[T-{r},T],\\
1& \text{for }t\in[0,T-2{r}].
\end{array}\right.
\end{equation}

 Let us consider $g:\R\times\Omega\to\R^K,$ $K\in\N$. When $\vr_\mu(s)=\mu e^{-\mu s}\mathds{1}_{[0,\infty)}(s)$, $\mu>0$, the regularized function $g_\mu:\R\times\Omega\to\R$ is defined by \[g_\mu(t,x):=(\vr_\mu*{g})(t,x),\]
where $*$ stands for the convolution is in the time variable. Then
\begin{equation}
\label{gmu} g_\mu(t,x) =\mu\int_{-\infty}^t e^{\mu(s-t)}g(s,x)\,ds.
\end{equation}
Note that the linear mapping $g\mapsto g_\mu$ is bounded from $L^\infty(\OT)$ into $L^\infty(\OT)$, as well as  from $L_M(\OT;\rn)$  into $L_M(\OT;\rn)$. Moreover, $g_\mu$ is a unique solution to 
\begin{equation}
\label{gmu:ode}
\begin{split}\partial_t g_\mu+ \mu(g_\mu-g) =0&\quad \text{ in }{\cal D'}(\R\times\Omega),\\
g_\mu(0,x) =g_0^{\mu}&\quad\text{ a.e. in }\Omega.\end{split}
\end{equation}

\subsubsection*{Approximation}

The following result coming from combined ideas of~\cite{pgisazg1} and~\cite{ASGcoll} is proven in Appendix~\ref{ssec:Appr}.
\begin{theo}[Approximation theorem]\label{theo:approx} 
Let $\Omega$ be a Lipschitz domain and an $N$-function $M$~satisfy condition (M). Then for any $\vp$ such that $\vp\in \VTMi$  there exists a sequence $\{\vp_\ve\}_{\ve>0}\in L^\infty (0,T; C_c^\infty(\Omega))$ converging modularly to $\vp$ when $\ve\to 0$, i.e. such that $\nabla\vp_\ve\xrightarrow[]{M}\nabla \vp$ in $\Omega_T$ and $ \vp_\ve\to  \vp$ strongly in $L^1(\Omega_T)$.
\end{theo}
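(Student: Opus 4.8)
\textbf{Proof proposal for Theorem~\ref{theo:approx} (Approximation theorem).}

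The plan is to build the approximating sequence in two stages: first a time-regularization, then a space-regularization adapted to the cube decomposition appearing in condition (M). Since $\vp\in\VTMi$ has $\nabla\vp\in L_M(\OT;\rn)$ but in general only weak-$*$ density of smooth functions in $L_M$ is available (the space need not be separable), the modular convergence must be obtained by hand, exploiting the quantitative estimate~\eqref{M2}. First I would extend $\vp$ by zero outside $\Omega$ and, using the Lipschitz regularity of $\partial\Omega$, dilate slightly so that $\nabla\vp$ is compactly supported inside $\Omega$ in the spatial variable; this costs only a small modular error by continuity of the shift in $L_M$, which follows from~\eqref{ass:M:int} and the $N$-function structure (here one uses that $L^\infty\subset E_M$, cf.~\eqref{LinfinEM}, so bounded truncations of $\vp$ approximate it modularly and the tail is controlled). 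In the time variable I would mollify with a standard kernel; since the problem is posed on the finite interval $[0,T]$ and $\vp(\cdot,x)$ is not required to vanish near $t=0$, the mollification is harmless after a reflection at $t=0$, and it gives $L^\infty(0,T;\cdot)$-regularity in time together with modular convergence of $\nabla\vp$ in the time direction.

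The heart of the argument is the spatial mollification on each cube $\Qd$ of the covering from condition (M). On $\tQd$ I would replace $M(x,\cdot)$ from below by its "frozen" infimum $\Mjd$ and then by its convex minorant $\Mss$; the point of~\eqref{M2} is precisely that $M(x,\xi)\le c\,\Mss(1+|\xi|^{-a/\log(3\delta\sqrt N)})$, so modular bounds with respect to the true $M$ on $\Qd$ follow from modular bounds with respect to the convex, $x$-independent function $\Mss$, up to the correction factor $|\xi|^{-a/\log(3\delta\sqrt N)}$. Because $\Mss$ is convex and independent of $x$, Jensen's inequality applies to the mollification $(\vp*\rho_\eta)$ with mollification radius $\eta=\eta(\delta)\ll\delta$, giving $\int \Mss(\nabla\vp*\rho_\eta)\le \int \Mss(\nabla\vp)$-type control on $\tQd$; this is exactly the step where~\cite{BenkiraneDouieb} erred by applying Jensen to the non-convex infimum, and where the convex-minorant device repairs it. Then one glues the cube-wise mollifications with a partition of unity subordinate to $\{\tQd\}$; the overlaps are controlled because each $\tQd$ meets only boundedly many others, and the partition-of-unity gradients contribute terms of order $\delta^{-1}\vp$ which are bounded in $L^\infty$ and hence modularly small after the earlier truncation step.

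The main obstacle, and the place where~\eqref{M2} is indispensable, is matching the two scales $\eta$ and $\delta$: the correction exponent $a/\log(3\delta\sqrt N)\to 0$ as $\delta\to0$, so one must send $\delta\to0$ to kill it, but at each fixed $\delta$ one needs $\eta\to0$ to get genuine mollification on the cubes, and the partition-of-unity error blows up like $\delta^{-1}$ unless it is absorbed by the modular smallness of the high-gradient part of $\vp$ on a set of small measure. I would therefore run a diagonal argument: fix $\delta$, let $\eta\to0$ to obtain $\vp_\delta\in L^\infty(0,T;C_c^\infty(\Omega))$ with $\int_{\OT}\Mjd^{**}\!\big(\lambda(\nabla\vp_\delta-\nabla\vp)\big)\to0$ for some $\lambda>0$ depending only on $\|\nabla\vp\|_{L_M}$, then use~\eqref{M2} to convert this into $\int_{\OT}M\big(x,\lambda(\nabla\vp_\delta-\nabla\vp)\big)\le \omega(\delta)$ with $\omega(\delta)\to0$, and finally pass $\delta\to0$ along a sequence. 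Strong $L^1$-convergence $\vp_\delta\to\vp$ is immediate from the $L^\infty$-in-time, $L^1$-in-space bounds and the mollification, so the modular convergence $\nabla\vp_\delta\xrightarrow{M}\nabla\vp$ together with $\vp_\delta\to\vp$ in $L^1(\OT)$ completes the proof. The estimates in the isotropic case reduce, via Lemma~\ref{lem:Mass}, to~\eqref{M2'}, which is why the log-H\"older-type hypothesis suffices there.
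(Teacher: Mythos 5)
Your overall philosophy is the right one (Jensen only for the convex minorant $\Mss$, log-H\"older balance between the mollification scale and the exponent in \eqref{M2}), but the mechanism you describe differs from the paper's and contains genuine gaps. First, you dismiss the boundary issue by invoking ``continuity of the shift in $L_M$, which follows from \eqref{ass:M:int}'': this is false in general Musielak--Orlicz spaces (already for variable exponents translations are not modularly continuous without log-H\"older regularity); controlling the small dilation is exactly as delicate as controlling the mollification, and the paper handles both at once through the single operator $S_\delta$ of \eqref{Sdxi} (dilation by $\kappa_\delta$ and convolution at the \emph{same} scale $\delta$) and the uniform bound of Lemma~\ref{lem:step2prev}. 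Second, and more seriously, your use of \eqref{M2} is inverted. In the paper the ratio $M(x,\cdot)/\Msss$ is applied \emph{only} to the mollified function $S_\delta\xi$, which is pointwise bounded by $C\delta^{-N}$ thanks to the $L^\infty(0,T;L^1)$ normalization and the kernel bound; since $(\delta^{-N})^{-a/\log(3\delta\sqrt N)}$ stays bounded as $\delta\to 0$, the correction factor is uniformly bounded there, and the return from $(\Mjd)^{**}$ to $M$ happens only through the trivial inequality $(\Mjd)^{**}\le M(x,\cdot)$ on $\tQd$. You instead propose to estimate the \emph{difference} $\nabla\vp_\delta-\nabla\vp$ modularly with respect to $(\Mjd)^{**}$ and then ``use \eqref{M2} to convert'' this into an estimate for $M$; but the difference is unbounded (its size is that of $\nabla\vp\in L_M$), so the factor $1+|\xi|^{-a/\log(3\delta\sqrt N)}$ is not a bounded multiplier and the conversion fails. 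Moreover, in your diagonal scheme you fix $\delta$ and let the mollification radius $\eta\to 0$ independently: then the mollified functions are only bounded by $\eta^{-N}$, and $(\eta^{-N})^{-a/\log(3\delta\sqrt N)}\to\infty$ at fixed $\delta$, so the very matching of scales that makes \eqref{M2} usable is lost (it works only if $\eta$ is a fixed power of $\delta$, which is essentially the paper's choice $\eta=\delta$).

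Two further points. Your $\delta$-dependent partition of unity subordinate to $\{\tQd\}$ produces gradient terms of size $\delta^{-1}$ multiplying $\vp$, and being ``bounded in $L^\infty$'' at fixed $\delta$ does not make them modularly small as $\delta\to 0$; no mechanism (e.g.\ a modular Poincar\'e inequality on small cubes with uniform constant) is offered, and none is needed in the paper: there the only partition of unity is the \emph{finite, fixed} one reducing the Lipschitz domain to star-shaped pieces, while the cubes of (M) enter solely inside the integral estimate of Lemma~\ref{lem:step2prev}, never to glue functions. Finally, you are missing the ingredient the paper uses to handle the ``shift-continuity'' part rigorously: the modular density of \emph{bounded simple functions} (Lemma~\ref{lem:dens}, resting on \eqref{ass:M:int}), which allows the term $S_\delta E^n-E^n$ to be treated by $L^1$-continuity of shifts plus dominated convergence with an integrable majorant, while the unbounded remainder is absorbed by the uniform modular boundedness of $S_\delta$ ($L_1\le C L_3$ in the paper's notation), together with the truncation step $T_l\vp$ whose modular convergence is elementary. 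Your time mollification, by contrast, is harmless but unnecessary, since the approximating sequence is only required to lie in $L^\infty(0,T;C_c^\infty(\Omega))$.

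\newcommand{\Msss}{(\Mjd)^{**}}
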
 

\subsubsection*{Weak renormalised formulation}
  
 Adapting the framework of~\cite{gwiazda-ren-para} to our setting, we have the following formulation, whose proof is given in Appendix~\ref{ssec:Weak}.
\begin{lem}\label{lem:intbyparts}
Suppose $u:\Omega_T\to\r$ is a measurable function such that for every $k\geq 0$, $T_k(u)\in \VTM$, $u(t,x)\in L^\infty([0,T];L^1(\Omega))$. Let us assume that there exists $u_0\in L^1(\Omega)$ such that $u_0(x):=u(0,x)$. Furthermore, assume that there exist $A\in L_{M^*}(\Omega_T;\rn)$  and $F\in L^1(\Omega_T)$ satisfying
\begin{equation}
\label{eq:lem:int-by-parts-1}
-\int_{\OT}(u-u_0)\partial_t \vp \,dx\,dt+\int_{\OT}A\cdot \nabla\vp \,dx\,dt=\int_{\OT}F\, \vp \,dx\,dt,\qquad \forall_{\vp\in{C_c^\infty}([0,T)\times \Omega)}.
\end{equation}
Then 
\begin{equation*}
-\int_{\OT} \left(\int_{u_0}^u h(\s)d\s\right) \partial_t \xi \ \,dx\,dt+\int_{\OT}A\cdot \nabla (h(u)\xi) \,dx\,dt=\int_{\OT}F h(u)\xi \,dx\,dt
\end{equation*}
holds for all $h\in W^{1,\infty}(\r)$, such that $\supp (h')$ is compact and all $\xi\in \VTMi$, such that $\partial_t\xi\in L^\infty(\OT)$ and $\supp\xi(\cdot,x)\subset[0,T)$ for a.e. $x\in\Omega$, in particular for  $\xi \in C_c^\infty([0,T)\times\overline{\Omega})$. 
\end{lem}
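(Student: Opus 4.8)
The plan is to start from the linear/weak identity \eqref{eq:lem:int-by-parts-1}, which holds for all test functions $\vp\in C_c^\infty([0,T)\times\Omega)$, and to upgrade it in three stages: first enlarge the class of admissible test functions to $\xi\in\VTMi$ with $\partial_t\xi\in L^\infty(\OT)$ and $\supp\xi(\cdot,x)\subset[0,T)$; then compose $u$ with a renormalizing function $h$; and finally combine the two. For the first stage I would fix such a $\xi$, apply the approximation theorem (Theorem~\ref{theo:approx}) to obtain $\xi_\ve\in L^\infty(0,T;C_c^\infty(\Omega))$ with $\nabla\xi_\ve\xrightarrow{M}\nabla\xi$ modularly and $\xi_\ve\to\xi$ strongly in $L^1(\OT)$; one must additionally mollify in time (using, e.g., the kernels $\vt$ or a suitable translation-in-time argument) to handle $\partial_t$, and use the boundedness of time-mollification on $L_M$ and $L^\infty$ noted after \eqref{gmu:ode}. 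Passing to the limit in \eqref{eq:lem:int-by-parts-1} with $\vp=(\xi_\ve)$-in-time-mollified: the term $\int A\cdot\nabla\vp$ passes because $A\in L_{M^*}$ pairs against the modularly (hence weakly-$*$) convergent $\nabla\vp$; the term $\int F\vp$ passes by $L^1$-strong convergence of $\vp$ together with $\|\vp\|_\infty$ bounds and dominated convergence; the time-derivative term $\int(u-u_0)\partial_t\vp$ needs care because $u$ is only $L^\infty(0,T;L^1(\Omega))$ while $\partial_t\vp\in L^\infty$, so the pairing is $L^1$–$L^\infty$ and is stable under the approximation once $\partial_t\xi_\ve$ is controlled in $L^\infty$ (which is where time-regularization, not spatial, does the work, and why one cannot simply use Theorem~\ref{theo:approx} alone).

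For the renormalization stage, I would take $h\in W^{1,\infty}(\R)$ with $\supp h'$ compact, so $h$ is bounded and constant outside a bounded set, say $\supp h'\subset[-k_0,k_0]$ with $h$ equal to constants $h(\pm\infty)$ for $|s|\ge k_0$. The idea is to use $\xi$-localized test functions adapted to truncations: since $T_k(u)\in\VTM$ for every $k$, the chain rule gives $\nabla(h(u))=h'(u)\nabla u=h'(T_{k_0}(u))\nabla T_{k_0}(u)\in L_M(\OT;\rn)$, and $h(u)\xi$ is itself an admissible test function of the type allowed in stage one (it lies in $\VTMi$ because $h$ is bounded and $\xi\in\VTMi$, its spatial gradient is in $L_M$, and $\partial_t(h(u)\xi)$ — formally $h'(u)\partial_t u\,\xi+h(u)\partial_t\xi$ — is handled not pointwise but through the weak formulation itself). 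The clean way is: in the already-extended identity from stage one, choose the test function $h(u)\xi$ directly, which produces $\int A\cdot\nabla(h(u)\xi)$ on the left and $\int F h(u)\xi$ on the right immediately; the only genuine work is to rewrite the time term $-\int (u-u_0)\partial_t(h(u)\xi)$ as $-\int(\int_{u_0}^u h(\sigma)d\sigma)\partial_t\xi$. This is the formal computation $\partial_t\big(\int_{u_0}^u h\big)=h(u)\partial_t u$ combined with $\partial_t(h(u)\xi)=h(u)\partial_t\xi+h'(u)\partial_t u\,\xi$ and $(u-u_0)$ versus $\int_{u_0}^u h$; rigorously it is done by the standard time-mollification trick of Landes: replace $u$ by its Landes-type time-regularization $u_\mu$ (solving an ODE like \eqref{gmu:ode}), for which $\partial_t u_\mu$ is a genuine function, carry out the integration by parts in $t$ legitimately, and then send $\mu\to\infty$ using $u_\mu\to u$ in the relevant topologies and the uniform bounds coming from $T_k(u)\in\VTM$ and the radiation-type estimates implicit in the hypotheses.

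The main obstacle I anticipate is precisely the justification of the time-derivative manipulations: neither $u$ nor $h(u)$ has a weak time derivative in any space that pairs naturally with $L_{M^*}$ gradients, so every identity involving $\partial_t$ must be obtained by regularizing in time, performing the algebra on the regularized level, and passing to the limit. The delicate points in that limit passage are (i) showing $\int_{\OT}A\cdot\nabla(h(u)\xi)$ is well-defined and stable — here $\nabla(h(u)\xi)=\xi h'(u)\nabla T_{k_0}(u)+h(u)\nabla\xi$, and both $\nabla T_{k_0}(u)$ and $\nabla\xi$ lie in $L_M$ while $A\in L_{M^*}$, with $\xi,h(u),h'(u)$ bounded, so the pairing is the standard $L_M$–$L_{M^*}$ duality and no $\Delta_2$ is needed because we only need the pairing, not density; (ii) controlling the term $\int h'(u)\partial_t u_\mu\,\xi\,(u-u_0)$-type cross terms so they telescope correctly as $\mu\to\infty$, which uses the monotone/absolute-continuity structure of $s\mapsto\int_{u_0}^s h$; and (iii) ensuring the test function $h(u)\xi$ genuinely falls within the class for which stage one applies, i.e. verifying $h(u)\xi\in\VTMi$ with the stated support and $L^\infty$ time-derivative properties in the weak sense. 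Once these are in place, collecting terms yields exactly the claimed renormalized identity, and the special case $\xi\in C_c^\infty([0,T)\times\overline\Omega)$ is immediate since such $\xi$ trivially satisfy all the requirements.
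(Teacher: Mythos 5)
Your overall strategy (enlarge the admissible test class, then renormalize via time regularization) is a recognizable alternative route, but as written it has a genuine gap exactly at the heart of the lemma: the treatment of the time term. First, the ``clean way'' of plugging $h(u)\xi$ directly into your stage-one identity is not legitimate, because the extended class you construct in stage one requires $\partial_t$ of the test function to lie in $L^\infty(\OT)$, and $\partial_t\bigl(h(u)\xi\bigr)$ does not exist as a function under the hypotheses (neither $u$ nor $h(u)$ has any time regularity). Second, your fallback -- replace $u$ by a Landes-type regularization $u_\mu$ and do the chain rule there -- runs into two problems you do not resolve: (i) $\nabla u_\mu=(\nabla u)_\mu$ is not available, since only $\nabla T_k(u)\in L_M(\OT;\rn)$ is assumed, not $\nabla u$; one must regularize $T_{k_0}(u)$ instead, and then (ii) the resulting cross term, of the form $\mu\int_{\OT}(u-u_0)\,h'\bigl((T_{k_0}(u))_\mu\bigr)\bigl(T_{k_0}(u)-(T_{k_0}(u))_\mu\bigr)\xi\,dx\,dt$, has no sign (since $h'$ and $\xi$ change sign and $u-u_0$ is only in $L^\infty(0,T;L^1)$) and its limit as $\mu\to\infty$ is precisely what must be identified. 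Your appeal to ``the monotone/absolute-continuity structure of $s\mapsto\int_{u_0}^{s}h$'' does not help, because that primitive is not monotone for a general $h$; so the key limit is asserted, not proved.

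The paper closes exactly this gap by a different mechanism: it splits $h=h_1+h_2$ with $h_1$ nondecreasing and $h_2$ nonincreasing (primitives of $(h')^{+}$ and $(h')^{-}$), takes as test functions smoothed forward and backward Steklov time-averages of $\zeta=h_1(T_k(u))\xi$ (for nonnegative $\xi$), and uses the elementary monotonicity inequality $\int_{s_1}^{s_2}h_1(T_k(\sigma))\,d\sigma\geq h_1(T_k(s_1))(s_2-s_1)$ to obtain two opposite one-sided inequalities whose combination yields the identity; no chain rule in time and no regularization of $u$ itself is ever needed. If you want to salvage your route, you must introduce an analogous monotone decomposition of $h$ (or another device supplying the missing sign/convexity information) before the $\mu\to\infty$ limit; without it the argument does not go through.
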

 
\subsubsection*{Comparison principle}

The comparison principle we provide below is the consequence of choice proper family of test functions. The result will be used in the proof of almost everywhere convergence of $(u_n)_n$ and uniqueness of solutions. The proof is presented in Appendix~\ref{ssec:Comp}.

\begin{prop}\label{prop:comp-princ}
Suppose $v^1,v^2$ are renormalized solutions  to\[\left\{\begin{array}{l}
v^1_t-{\dv }A(x,\nabla v^1)= f^1\in L^1(\OT),\\
v^1(0,x)=v^1_0(x)\in L^1(\Omega)
\end{array}\right.\qquad \left\{\begin{array}{l}
v^2_t-{\dv }A(x,\nabla v^2)= f^2\in L^1(\OT),\\
v^2(0,x)=v^2_0(x)\in L^1(\Omega),
\end{array}\right.\]where $f^1\leq f^2$ a.e. in~$\OT$ and $ v^1_0 \leq v^2_0$ in~$\Omega.$ Then $v^1\leq v^2$ a.e. in~$\OT$.
\end{prop}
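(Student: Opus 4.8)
The plan is to test the (renormalized) equations for $v^1$ and $v^2$ against a function that regularizes $\sg(v^1-v^2)$, i.e.\ a truncated/smoothed approximation of the positive part, and to show that $(v^1-v^2)^+ = 0$ a.e.\ in $\OT$. Concretely, I would fix renormalization functions $h = h_l$ of the form $h_l = \psi_l$ (the cut-off at level $l$ defined in~\eqref{psil}, suitably rescaled so $h_l \equiv 1$ near $0$ and $h_l$ compactly supported), use Lemma~\ref{lem:intbyparts} to pass from the weak renormalized formulation (R2) to the form~\eqref{eq:lem:int-by-parts-1}-type identity with the renormalization already in place, and then subtract the two identities. The natural test function is $\xi = S_\eta\big((T_k(v^1) - T_k(v^2))^+\big)$ or, more robustly, a time-regularized version $\xi_\mu$ obtained via the operator $g\mapsto g_\mu$ from~\eqref{gmu}, so that $\partial_t\xi \in L^\infty$ as required; here $S_\eta$ is a smooth approximation of $\sg$ with $S_\eta' \geq 0$. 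One must be careful that $\xi$ is admissible: it lies in $\VTMi$ because $T_k(v^i)\in\VTM$ and both are bounded, it has the right support in $[0,T)$, and the time-regularization gives the needed $L^\infty$ time derivative.

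The key steps, in order, are: (1) For each fixed $k$ and $l$, write the difference of the two renormalized identities tested against $\xi_\mu = \big(S_\eta(T_k(v^1) - T_k(v^2))\big)_\mu$. (2) Handle the parabolic term: using the ODE~\eqref{gmu:ode} satisfied by $g_\mu$ and a Steklov/mollification-in-time argument, show that the time term converges (as $\mu\to 0$) to something of the form $\int_\Omega \Theta\big((v^1-v^2)(t,\cdot)\big)\,dx$ evaluated at time $t$, where $\Theta$ is a nonnegative convex primitive of $S_\eta$ vanishing where $v^1\le v^2$; since $v^1_0 \le v^2_0$ this initial contribution is zero and the term is nonnegative at the final time. (3) Handle the elliptic term: on the set where $S_\eta' > 0$ we have $v^1 > v^2$ (in the limit $\eta\to 0$), and there monotonicity (A3) gives $(A(x,\nabla v^1) - A(x,\nabla v^2))\cdot\nabla(v^1 - v^2) \geq 0$, so the diffusion term has a good sign; the contributions from the renormalization cut-off (terms with $h_l'$, i.e.\ on $\{l<|v^i|<l+1\}$) are controlled by (R3) and vanish as $l\to\infty$. (4) The right-hand side contributes $\int (f^1 - f^2) S_\eta(\cdots)\,\xi \leq 0$ since $f^1 \leq f^2$ and $S_\eta \geq 0$. (5) Combine: the time term (nonnegative) plus the diffusion term (nonnegative) equals the right-hand side (nonpositive), forcing all of them to zero; conclude $\int_\Omega (v^1 - v^2)^+(t)\,dx = 0$ for a.e.\ $t$, hence $v^1 \leq v^2$ a.e., after sending $\eta\to 0$, $l\to\infty$, $k\to\infty$.

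The main obstacle I expect is the rigorous treatment of the parabolic (time-derivative) term: because $v^1, v^2$ themselves need not have a time derivative in any useful space — only their truncations are controlled, and only in the renormalized sense — one cannot directly integrate by parts in $t$. The standard remedy is the time-regularization $g\mapsto g_\mu$ together with the identity~\eqref{gmu:ode}, which turns the troublesome $\int (\partial_t(\text{something}))\,\xi$ into $\int \mu(\xi - \xi_\mu)(\cdots)$-type expressions with a favourable sign, followed by a careful passage $\mu\to 0$; the subtlety is that one is subtracting two such regularizations (one for $v^1$, one for $v^2$) and must show the cross terms behave well and that the limiting quantity is genuinely the integral of a nonnegative convex function of $v^1 - v^2$. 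A secondary technical point is ensuring that the truncation level $k$ can be sent to infinity — this uses (R1) (so that $A(\cdot,\nabla T_k(v^i))\in L_{M^*}$) together with the radiation-control condition (R3) to absorb the error terms coming from $\nabla T_k(v^i) \ne \nabla v^i$ on $\{|v^i| > k\}$, uniformly enough to close the estimate.
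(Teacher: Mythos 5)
Your strategy is correct and is, in substance, the paper's own proof: both arguments subtract the two renormalized identities with renormalization $h=\psi_l$ from~\eqref{psil}, test against a Lipschitz approximation of the positive sign of a difference of truncations, use (A3) to give the diffusion term a favourable sign, use the radiation control (R3) to remove the $\psi_l'$-terms as $l\to\infty$, and use $f^1\leq f^2$, $v^1_0\leq v^2_0$ to make the right-hand side nonpositive, concluding that $(v^1-v^2)^+=0$ a.e. The differences are technical, and the main one concerns exactly the point you flag as the obstacle. The paper ties the truncation level to the renormalization level by choosing $\xi=H_\delta\bigl(T_{l+1}(v^1)-T_{l+1}(v^2)\bigr)\bt(t)$, where $H_\delta$ is the piecewise-linear approximation of the Heaviside function and $\bt$ is a ramp cut-off near a generic time $\tau$; no time regularization of the test function is performed. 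The parabolic term is handled without ever touching $\partial_t v^i$: after renormalization it appears only through the primitive $\int_{v^2}^{v^1}\psi_l(\sigma)\,d\sigma$, the contribution of $\partial_t H_\delta(\cdots)$ is estimated by $(2+4(l+1))$ times the measure of the set where $0<T_{l+1}(v^1)-T_{l+1}(v^2)<\delta$ and so vanishes as $\delta\to0$, while the $\partial_t\bt$ contribution yields the boundary term at time $\tau$, recovered for a.e.\ $\tau$ by letting $r\to0$ at Lebesgue points (limits taken in the order $\delta\to0$, $l\to\infty$, $r\to0$). Your proposal to regularize the test function in time via $g\mapsto g_\mu$ and~\eqref{gmu:ode} is therefore not needed in the paper's execution, although it is a legitimate, and arguably more scrupulous, way to meet the admissibility requirement $\partial_t\xi\in L^\infty(\OT)$; likewise your separate limit $k\to\infty$ is absorbed by the paper's choice $k=l+1$, and your smooth $S_\eta$ plays the role of $H_\delta$. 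The lesson worth internalizing from the paper's version is that the convexity inequality encoded in $\int_{v^2}^{v^1}\psi_l(\sigma)\,d\sigma$, combined with a monotone function of the difference of truncations, eliminates the need for any Landes-type time regularization in this comparison argument.
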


\section{The construction and convergence}\label{sec:constr}
  The proof is divided into several steps. We start with the proof of existence to a regularized problem and then to the problem with bounded data, while afterwards we verify the convergence.

\subsubsection*{Step 1. Regularized truncated problem}

 We apply the general method of~\cite{ElMes} leading to existence. Let   $m:\rn\to\R$ be a radially summetric function, i.e. $m(\xi)=\overline{m}(|\xi|)$ with some  $\overline{m}:\R\to\R$. We say that $m$ \textbf{grows essentially more rapidly} than $M$ if \[ {\overline{M}(s)}/{\overline{m}(s)}\to 0\quad\text{ as }\quad s\to\infty\quad\text{ for }\quad {
\overline{M} (s) = \sup_{\{x\in\Omega,\ |\xi|=s\}}
M (x, \xi)}.\]

Let us point out that when $\Omega$ has finite measure and $m$ grows essentially more rapidly than $M$, we have
\begin{equation}
\label{LminEM}
L_m(\Omega;\rn)\subset E_M(\Omega;\rn)\quad\text{and}\quad 
L_m(\Omega_T;\rn)\subset E_M(\Omega_T;\rn).
\end{equation} 

Recall that we use notation $\nabla$ for a gradient with respect to the spacial variable. Let us introduce also notation $\bn:=\nabla_\xi$. Using it  $\bn m(\xi) =\nabla_\xi \overline{m}(|\xi|)=\xi\overline{m}'(|\xi|)/|\xi|$. Observe that it gives equality in the Fenchel--Young inequality in the following way
\begin{equation}\label{FYeq}
\bn m(\xi)\cdot\xi = \overline{m}( |\xi|)+\overline{m}^*(|\bn m(\xi)|).
\end{equation}
Taking an arbitrary $N$-function $m$ which grows essentially more rapidly than  $M$ we observe that  $m$ is strictly monotone as a gradient of a strictly convex function, i.e.
\begin{equation}\label{msmon}
(\bn m(\xi)-\bn m(\eta))(\xi-\eta)>0\qquad \forall_{\xi,\eta\in\rn}.
\end{equation}

The following proposition yields the existence of solutions to a regularized problem. 
\begin{prop}\label{prop:reg-bound}  Assume  an $N$-function $M$ satisfies assumption (M), a vector field $A$ satisfies assumptions (A1)-(A3), and $m$ is a~radially symmetric function growing essentially more rapidly than $M$ satisfying~\eqref{msmon}.

We consider a regularized operator given by
\begin{equation}\label{Atheta}
A_\theta(x,\xi):=A(x,\xi)+\theta \bn m(\xi)\qquad \forall_{x\in\rn,\,\xi\in\rn}.
\end{equation} 

 Let $f\in L^1(\Omega_T)$ and $u_0\in L^1(\Omega)$. Then for every $\theta\in(0,1]$ and $n\in\N$ there exists a weak solution to the problem  \begin{equation}\label{eq:reg-bound}
\left\{\begin{array}{ll}
\partial_t u_n^\theta-\dv A_\theta(x,\nabla u_n^\theta)= T_n (f) & \ \mathrm{ in}\  \OT,\\
u_n^\theta(t,x)=0 &\ \mathrm{  on} \ (0,T)\times\partial\Omega,\\
u_n^\theta(0,\cdot)=u_{0,n}(\cdot)=T_n(u_0)\in L^1(\Omega) & \ \mathrm{ in}\  \Omega.
\end{array}\right.
\end{equation} Namely, there exists $u_n^\theta \in C([0,T];L^2(\Omega))\cap L^1(0,T; W^{1,1}_{0}(\Omega))$ with $\nabla u_n^\theta\in L_m(\Omega_T;\rn)$, such that 
\begin{equation}\begin{split}\label{weak-reg-bound}
&-\iOT u_n^\theta\partial_t \vp dx\, dt+\iO u_n^\theta(T)\vp(T)\, dx- \iO u_n^\theta(0)\vp(0)\, dx+\iOT A_\theta(x,\nabla u_n^\theta)\cdot \nabla \vp\, dx\, dt \\&= \iOT T_n(f)\vp\, dx\, dt 
\end{split}\end{equation} 
holds for $\vp\in C^\infty([0,T];C_c^\infty(\Omega))$.

Moreover, the energy equality is satisfied, i.e.
\begin{equation}
\label{en:eq}
\frac{1}{2}\iO (u_n^\theta(\tau))^2\, dx-
\frac{1}{2}\iO (u_{0,n} )^2\, dx+\int_{\Omega_\tau} A_\theta(x,\nabla u_n^\theta)\cdot \nabla u_n^\theta\, dx\, dt=  \int_{\Omega_\tau} T_n(f) u_n^\theta\, dx\, dt,
\end{equation}
where $\tau\in[0,T]$.
\end{prop}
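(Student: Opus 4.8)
The plan is to obtain $u_n^\theta$ via the classical monotone operator method in the separable reflexive setting provided by the regularization. Since $m$ grows essentially more rapidly than $M$, the natural energy space for the regularized problem is $L^{\overline m}_0$-type, but it is more convenient to work in a genuine reflexive Orlicz-Sobolev space; to this end I would first reduce to the case where $m$ is additionally chosen so that both $m$ and $m^*$ satisfy $\Delta_2$ (this is always possible: given any $N$-function growing essentially more rapidly than $M$, one can find another one, still growing essentially more rapidly than $M$ but with polynomial-type growth bounds, e.g.\ sandwiched between $|\xi|^{p}$ and $|\xi|^{q}$ for suitable $1<p<q<\infty$ on $\{|\xi|\ge 1\}$). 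Then $V_n:=\{v\in L^1(0,T;W_0^{1,1}(\Omega)):\nabla v\in L_m(\Omega_T;\rn)\}$ coincides, up to equivalence of norms, with a reflexive separable Bochner space $L^q(0,T;W_0^{1,q}(\Omega))\cap(\text{lower-order corrections})$, and the operator $\mathcal{A}_\theta v:=-\dv A_\theta(x,\nabla v)$ is well-defined, bounded, from $V_n$ into its dual; boundedness of $\mathcal{A}_\theta$ uses (A2) together with (A1) and the fact that $m$ dominates $M$, which forces $A(x,\nabla v)\in L_{M^*}\subset L_{m^*}$, while $\theta\bn m(\nabla v)\in L_{m^*}$ by the Fenchel-Young equality~\eqref{FYeq}.

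Next I would verify the structural hypotheses of the abstract existence theorem for parabolic problems with monotone operators (e.g.\ the classical result used in~\cite{ElMes}, or Lions' theorem): pseudomonotonicity and coercivity of $\mathcal{A}_\theta$. Monotonicity is immediate from (A3) for the $A$-part and from~\eqref{msmon} for the $\theta\bn m$-part; hemicontinuity follows from the Carath\'eodory property (A1) and continuity of $\bn m$, together with the dominated convergence / Vitali argument in $L_{m^*}$; coercivity follows from (A2) and~\eqref{FYeq}, which give
\[
\langle \mathcal{A}_\theta v, v\rangle \geq c_A\int_{\Omega_T} M(x,\nabla v)\,dx\,dt + \theta\int_{\Omega_T}\big(\overline m(|\nabla v|)+\overline m^*(|\bn m(\nabla v)|)\big)\,dx\,dt,
\]
and the $\theta$-term alone already controls the $L_m$-modular of $\nabla v$, hence the $V_n$-norm, which is what coercivity requires. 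Since $T_n(f)\in L^\infty(\Omega_T)\subset V_n^*$ and $u_{0,n}=T_n(u_0)\in L^2(\Omega)$, the abstract theorem yields a solution $u_n^\theta\in C([0,T];L^2(\Omega))\cap V_n$ with $\partial_t u_n^\theta\in V_n^*$, satisfying the weak formulation tested against $V_n$, and in particular~\eqref{weak-reg-bound} after an integration by parts in time; the regularity $u_n^\theta\in C([0,T];L^2(\Omega))$ is the standard consequence of $u_n^\theta\in V_n$ and $\partial_t u_n^\theta\in V_n^*$ via the Lions-Magenes interpolation lemma, which also legitimizes the boundary terms at $t=0,T$.

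The energy equality~\eqref{en:eq} then follows from testing~\eqref{weak-reg-bound}, or rather its distributional form $\partial_t u_n^\theta-\dv A_\theta(x,\nabla u_n^\theta)=T_n(f)$, with $u_n^\theta\,\mathds{1}_{[0,\tau)}$ — rigorously, with a time-mollified version $\vt$ of the indicator and then passing $r\to 0$, using $\int_0^\tau\langle\partial_t u_n^\theta,u_n^\theta\rangle\,dt=\tfrac12\|u_n^\theta(\tau)\|_{L^2}^2-\tfrac12\|u_{0,n}\|_{L^2}^2$, which is exactly the chain rule from the same interpolation lemma. The main obstacle I anticipate is the reduction step and the bookkeeping needed to place the regularized problem honestly inside a reflexive, $\Delta_2$-regular Orlicz-Sobolev framework without losing the domination $L_m\subset E_M$ — one must check that $A_\theta$ still maps the chosen energy space into its dual (i.e.\ that $A(x,\nabla v)$, a priori only in $L_{M^*}$, actually lies in the dual of $L_m$, which it does precisely because $m\gg M$ implies $m^*\ll M^*$ in the appropriate sense); once the functional-analytic setup is correctly fixed, the monotone-operator machinery and the energy identity are routine. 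A secondary technical point is justifying the test-function manipulation for~\eqref{en:eq} in the Musielak-Orlicz (rather than Hilbert) setting, but this is handled exactly as the time-mollification argument underlying Lemma~\ref{lem:intbyparts}, using the auxiliary functions $\vt$ and $\phi_r$ already introduced.
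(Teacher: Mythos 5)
There is a genuine gap, and it sits exactly at the step you flag as the ``reduction'': the claim that one can always replace $m$ by another $N$-function, still growing essentially more rapidly than $M$, but sandwiched between $|\xi|^p$ and $|\xi|^q$ (hence with $m,m^*\in\Delta_2$), is false in the generality of this proposition. The definition of ``grows essentially more rapidly'' requires $\overline M(s)/\overline m(s)\to 0$ with $\overline M(s)=\sup_{x\in\Omega,\,|\xi|=s}M(x,\xi)$, and the whole point of the paper is that $M$ carries no upper growth restriction: the model examples include $M(x,\xi)=a(x)(\exp(|\xi|)-1+|\xi|)$. If $\overline M$ grows faster than every polynomial, then any admissible $m$ must also grow super-polynomially, while a one-variable $N$-function satisfying $\Delta_2$ necessarily has power-type upper growth ($m(2s)\le c\,m(s)$ iterates to $m(s)\lesssim s^{\log_2 c}$). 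So no $\Delta_2$-regular $m$ dominating $M$ exists in the cases the proposition is designed to cover, and your subsequent functional-analytic setup --- identifying the energy space with a reflexive separable Bochner space $L^q(0,T;W_0^{1,q}(\Omega))$ and invoking Lions' theorem plus the Lions--Magenes chain rule --- collapses with it. Relatedly, with $M$ of exponential type and $\nabla v$ merely in $L^q$, the quantity $M(x,\nabla v)$ need not be integrable, so $A(x,\nabla v)$ is not controlled in any dual space; the domination $L_m\subset E_M$ that you correctly identify as essential is precisely what cannot be retained after your proposed replacement of $m$.

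The paper avoids this by not leaving the non-reflexive Orlicz world at all: it invokes the existence theorem of Elmahi--Meskine (\cite[Theorem~2]{ElMes}), which is formulated for parabolic problems in Orlicz--Sobolev spaces without any $\Delta_2$ assumption on $\overline m$ or $\overline m^*$. The actual content of the paper's proof is then the verification of that theorem's hypotheses, and in particular of its growth condition \cite[(7)]{ElMes}: one shows, using (A2), the Fenchel--Young inequality, the equality case \eqref{FYeq} for $\bn m$, and $c_A,\theta\in(0,1]$, that
\begin{equation*}
c_A\,\overline m^*\Bigl(\tfrac12\bigl|A_\theta(x,\nabla u_n^\theta)\bigr|\Bigr)\le \overline m\Bigl(\Bigl|\tfrac{2}{c_A}\nabla u_n^\theta\Bigr|\Bigr),
\end{equation*}
whence by convexity $|A_\theta(x,\nabla u_n^\theta)|\le 2(\overline m^*)^{-1}\bigl(\tfrac1{c_A}\overline m(|\tfrac2{c_A}\nabla u_n^\theta|)\bigr)$, which is the bound \cite[(7)]{ElMes} requires. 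Your proposal never establishes a bound of this type; the ``boundedness of $\mathcal A_\theta$'' you sketch is only meaningful inside the reflexive framework that the flawed reduction was supposed to provide. To repair the argument you would either have to prove an abstract existence theorem for monotone parabolic operators directly in the non-reflexive Orlicz setting (essentially redoing \cite{ElMes}), or, as the paper does, check the precise hypotheses of that result, including the pointwise growth estimate above, with the energy equality \eqref{en:eq} then coming from that theorem's framework rather than from a Lions--Magenes interpolation argument.
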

\begin{proof} To apply \cite[Theorem~2]{ElMes}, we verify the assumption therein.  Monotonicity condition \cite[(9)]{ElMes} follows from~\eqref{msmon} and~(A3), while \cite[(10)]{ElMes} from~\eqref{FYeq} and~\eqref{Atheta}.

It suffices now to show that we have upper bound~\cite[(7)]{ElMes}   on the growth of $A_\theta$, i.e.
\begin{equation}\label{A-ElMes7}
|A_\theta(x,\nabla u^\theta_n)|\leq k_1(x)+c_1(\overline{m}^*)^{-1}(\overline{m}(|c_2 \nabla u_n^\theta|))+c_1(p^*)^{-1}(\overline{m}(c_2|u^\theta_n|))
\end{equation}
with some $k_1 \in E_{m^*}(\Omega)$ and an $N$-function $p$ growing essentially less rapidly than~$\overline{m}$. To get~\eqref{A-ElMes7} it suffices to show
\begin{equation*}\overline{m}^*(|c_1 A_\theta(x,\nabla u_n^\theta)|)\leq \overline{m}(|c_2 \nabla u_n^\theta|),
\end{equation*}
which follows from the Fenchel-Young inequality~\eqref{inq:F-Y},~\eqref{FYeq}, (A3) and $c_A,\theta\in(0,1]$. Indeed,  we have
\begin{multline*}A_\theta(x,\nabla u_n^\theta)\cdot \nabla u_n^\theta\leq \overline{m}\left(\left|\frac{2}{c_A}\nabla u_n^\theta\right|\right)+\overline{m}^*\left(\left|\frac{c_A}{2}A_\theta(x,\nabla u_n^\theta)\right|\right)\\\leq \overline{m}\left(\left|\frac{2}{c_A}\nabla u_n^\theta\right|\right)+c_A \overline{m}^*\left(\left|\frac{1}{2}A_\theta(x,\nabla u_n^\theta)\right|\right),\end{multline*} but on the other hand 
 \begin{equation*} 
 \begin{split}
 A_\theta(x,\nabla u_n^\theta)\cdot \nabla u_n^\theta &\geq c_A M\left(x,\nabla u_n^\theta\right)+c_A M^*\left(x, A(x,\nabla u_n^\theta)\right)+\theta \overline{m}\left(\left| \nabla u_n^\theta\right|\right)+\theta \overline{m}^*\left(\left|\bn m(\nabla u_n^\theta)\right|\right)\\&\geq 2 c_A\left(\frac{1}{2}\overline{m}^*\left(\left|A(x,\nabla u_n^\theta)\right|\right) +\frac{1}{2}\overline{m}^*\left(\left|\theta\bn m(\nabla u_n^\theta)\right|\right) \right) \geq 2 c_A \overline{m}^*\left(\frac{1}{2} \left|A_\theta(x,\nabla u_n^\theta)\right| \right).
 \end{split} 
 \end{equation*}
 Therefore, we get
 \[  c_A \overline{m}^*\left(\frac{1}{2} \left|A_\theta(x,\nabla u_n^\theta)\right| \right)\leq \overline{m}\left(\left|\frac{2}{c_A}\nabla u_n^\theta\right|\right) \]
and further, by convexity of $\overline{m}^*$,
\[ \left|A_\theta(x,\nabla u_n^\theta)\right|  \leq 2(\overline{m}^*)^{-1}\left(\frac{1}{c_A} \overline{m}\left(\left|\frac{2}{c_A}\nabla u_n^\theta\right|\right)\right). \]
Therefore, \cite[Theorem~2]{ElMes} gives the claim.\end{proof}
\subsubsection*{Step 2. Truncated problem}

We prove the existence for non-regularized problem with bounded data by passing to the limit with $\theta\to 0$ in the regularized truncated problem~\eqref{eq:reg-bound}.
\begin{prop}Suppose $A$ and $M$ satisfy conditions (A1)-(A3) and (M).  Let $f\in L^1(\Omega_T)$ and $u_0\in L^1(\Omega)$. Then for every  $n\in\N$ there exists a weak solution to the problem  \begin{equation}\label{eq:bound}
\left\{\begin{array}{ll}
\partial_t u_n -\dv A (x,\nabla u_n )= T_n (f) & \ \mathrm{ in}\  \OT,\\
u_n(t,x)=0 &\ \mathrm{  on} \ (0,T)\times\partial\Omega,\\
u_n (0,\cdot)=u_{0,n}(\cdot)=T_n(u_0)\in L^1(\Omega) & \ \mathrm{ in}\  \Omega.
\end{array}\right.
\end{equation} Namely, there exists $u_n \in \VTMi$, such that for any $\vp\in C_c^\infty([0;T)\times \Omega)$
\begin{eqnarray}\label{weak-reg} 
-\iOT u_n \partial_t \vp dx\, dt- \iO u_n(0)\vp(0)\, dx+\iOT A (x,\nabla u_n )\cdot \nabla \vp \, dx\, dt= \iOT T_n(f)\vp\, dx\, dt.
\end{eqnarray}  
\label{prop:bound}
\end{prop}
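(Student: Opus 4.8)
The plan is to obtain $u_n$ as the limit of the solutions $u_n^\theta$ of the regularized problem~\eqref{eq:reg-bound} as $\theta\to 0$, exploiting the energy equality~\eqref{en:eq} for uniform estimates and the monotonicity of $A$ via the Minty--Browder trick to identify the limit of $A(x,\nabla u_n^\theta)$. First I would derive the a priori bounds: testing~\eqref{en:eq} with $\tau=T$ and using the coercivity~(A2) together with $\overline m,\overline m^*\geq 0$ gives
\[
\tfrac12\iO (u_n^\theta(T))^2\,dx + c_A\iOT\big(M(x,\nabla u_n^\theta)+M^*(x,A(x,\nabla u_n^\theta))\big)\,dx\,dt \leq \tfrac12\|u_{0,n}\|_{L^2}^2 + \|T_n(f)\|_{L^1}\,\|u_n^\theta\|_{L^\infty(0,T;L^2)},
\]
and since $n$ is fixed the right-hand side data are bounded in $L^2(\Omega)$ and $L^2(\OT)$ respectively. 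A standard absorption argument (or first testing with $\tau$ arbitrary and taking the sup) yields $u_n^\theta$ bounded in $L^\infty(0,T;L^2(\Omega))$, $M(\cdot,\nabla u_n^\theta)$ and $M^*(\cdot,A(x,\nabla u_n^\theta))$ bounded in $L^1(\OT)$, hence $\nabla u_n^\theta$ bounded in $L_M(\OT;\rn)$ and $A(x,\nabla u_n^\theta)$ bounded in $L_{M^*}(\OT;\rn)$; moreover $\theta\,\overline m(|\nabla u_n^\theta|)$ and $\theta\,\overline m^*(|\bn m(\nabla u_n^\theta)|)$ are bounded in $L^1(\OT)$, so $\theta\bn m(\nabla u_n^\theta)\to 0$ in $L^1$ (in fact the bound forces $\theta\bn m(\nabla u_n^\theta)\to0$ strongly since $\overline m^*$ is an $N$-function and the modular of $\sqrt\theta\,\bn m(\nabla u_n^\theta)$ is bounded). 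Because $L_M(\OT;\rn)=(E_{M^*}(\OT;\rn))^*$ and $L_{M^*}(\OT;\rn)=(E_M(\OT;\rn))^*$, along a subsequence we get $\nabla u_n^\theta\overset{*}\rightharpoonup \nabla u_n$ in $L_M$ and $A(x,\nabla u_n^\theta)\overset{*}\rightharpoonup \overline A$ in $L_{M^*}$ for some limits; the equation~\eqref{weak-reg-bound} then passes to the limit to give~\eqref{weak-reg} with $\overline A$ in place of $A(x,\nabla u_n)$, and it remains to prove $\overline A=A(x,\nabla u_n)$.

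The identification $\overline A = A(x,\nabla u_n)$ is the main obstacle, since~(A3) gives only monotonicity, not strict monotonicity, and the spaces are non-reflexive. The route is the Minty--Browder argument adapted to the modular setting: from the energy equalities~\eqref{en:eq} for $u_n^\theta$ and the limit equation for $u_n$ one shows
\[
\limsup_{\theta\to 0}\iOT A_\theta(x,\nabla u_n^\theta)\cdot\nabla u_n^\theta\,dx\,dt \leq \iOT \overline A\cdot\nabla u_n\,dx\,dt,
\]
using that $\tfrac12\iO(u_n^\theta(T))^2\,dx \geq \tfrac12\iO u_n^2(T)\,dx + o(1)$ by weak lower semicontinuity together with a $u_n(0)=u_{0,n}$ argument and the integration-by-parts/renormalization machinery (Lemma~\ref{lem:intbyparts}), while $\iOT T_n(f)u_n^\theta\to\iOT T_n(f)u_n$ follows once we know $u_n^\theta\to u_n$ strongly in, say, $L^2(\OT)$ — this last compactness in turn comes from an Aubin--Lions type argument using the bound on $\partial_t u_n^\theta$ in the dual of $L^\infty(0,T;W_0^{1,\infty})$-type space plus the $L_M$-bound on gradients (or from a Minty-type monotonicity trick directly, bypassing strong convergence). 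Then, discarding the nonnegative term $\theta\big(\overline m(|\nabla u_n^\theta|)+\overline m^*(|\bn m(\nabla u_n^\theta)|)\big)\geq 0$ in $A_\theta\cdot\nabla u_n^\theta = A(x,\nabla u_n^\theta)\cdot\nabla u_n^\theta + \theta\bn m(\nabla u_n^\theta)\cdot\nabla u_n^\theta$, we get $\limsup_\theta\iOT A(x,\nabla u_n^\theta)\cdot\nabla u_n^\theta \leq \iOT\overline A\cdot\nabla u_n$. Combined with the monotonicity inequality $\iOT (A(x,\nabla u_n^\theta)-A(x,\eta))\cdot(\nabla u_n^\theta-\eta)\geq 0$ for every $\eta\in L^\infty(\OT;\rn)$, passing to the limit and using the weak-$*$ convergences yields $\iOT(\overline A - A(x,\eta))\cdot(\nabla u_n - \eta)\geq 0$ for all such $\eta$; a Minty-type localization argument (choosing $\eta = \nabla u_n\,\mathds 1_{E} + (\nabla u_n \pm \lambda w)\mathds 1_{E^c}$ on sets of finite measure where $\nabla u_n\in L^\infty$, then exhausting $\OT$, and letting $\lambda\to0$) gives $\overline A = A(x,\nabla u_n)$ a.e.

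Finally I would record the qualitative properties: $u_n\in L^\infty(0,T;L^2(\Omega))$ and $\nabla u_n\in L_M(\OT;\rn)$ with $u_n\in L^1(0,T;W_0^{1,1}(\Omega))$ give $u_n\in\VTMi$, the trace $u_n(0,\cdot)=u_{0,n}$ passes to the limit using the weak formulation~\eqref{weak-reg-bound} tested against $\vp$ with $\vp(0,\cdot)\ne0$, and~\eqref{weak-reg} is exactly the limit identity. The delicate points are thus (i) the passage to the limit in the evolution term, for which one needs either strong $L^2(\OT)$ compactness of $u_n^\theta$ or a careful use of the integration-by-parts formula of Lemma~\ref{lem:intbyparts} to handle $\iO(u_n^\theta(T))^2$, and (ii) the monotonicity identification of $\overline A$ in the non-reflexive anisotropic modular space, where the test functions $\eta$ must be taken bounded (so that $A(x,\eta)\in L_{M^*}$ by~(A2) and the $N$-function property, cf.~\eqref{LinfinEM}) and one argues by localization rather than by a global Minty trick. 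I expect step~(ii), together with the lower-semicontinuity control of the terminal energy term, to be where the real work lies.
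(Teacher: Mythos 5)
Your architecture is the same as the paper's: pass to the limit $\theta\to 0$ in the regularized problem, get a priori bounds from the energy equality~\eqref{en:eq} and (A2), extract weak-$*$ limits $\nabla u_n\in L_M$ and $\alpha^n\in L_{M^*}$, kill the $\theta\bn m$-term, prove $\limsup_\theta\iOT A(x,\nabla u_n^\theta)\cdot\nabla u_n^\theta\leq\iOT\alpha^n\cdot\nabla u_n$ via weak lower semicontinuity of the terminal energy plus the integration-by-parts machinery of Lemma~\ref{lem:intbyparts} applied to the limit equation, and conclude by a localized Minty argument with bounded test fields $\eta$ (the paper localizes on $\{|\nabla u_n|\leq j\}$, lets the outer set exhaust $\OT$ using $A(x,0)=0$ and Fenchel--Young, and uses Vitali to pass $h\to 0$ -- exactly your plan). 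The only cosmetic difference in the estimates is that the paper absorbs $\iint T_n(f)u_n^\theta$ via an auxiliary $\Delta_2$ function and the modular Poincar\'e inequality, whereas you absorb it directly using boundedness of $T_n(f)$; both work.

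Two points in your write-up are off and should be repaired, though neither derails the scheme. First, the convergence $\iOT T_n(f)u_n^\theta\,dx\,dt\to\iOT T_n(f)u_n\,dx\,dt$ does \emph{not} require strong $L^2(\OT)$ compactness: $T_n(f)\in L^\infty(\OT)\subset L^1(0,T;L^2(\Omega))$, so the weak-$*$ convergence \eqref{conv:untLi} in $L^\infty(0,T;L^2(\Omega))$ already suffices; invoking an Aubin--Lions-type argument is both unnecessary and contrary to the setting here, where the lack of growth conditions is precisely why the paper avoids Aubin--Lions (it is bypassed later via the comparison principle). Second, your justification that $\theta\bn m(\nabla u_n^\theta)\to 0$ strongly in $L^1$ is flawed: the bounded modular is that of $\theta\bn m(\nabla u_n^\theta)$ (since $m^*(\theta\xi)\leq\theta m^*(\xi)$ and $\theta\int m^*(\bn m(\nabla u_n^\theta))\leq C$), not of $\sqrt\theta\,\bn m(\nabla u_n^\theta)$, whose modular in fact scales like $C/\sqrt\theta$; and a bounded modular only yields uniform integrability, not convergence to zero. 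To conclude one must, as the paper does, split $\OT$ into $\{|\nabla u_n^\theta|\leq R\}$ (where $\theta|\bn m|$ is small by continuity of $\bn m$) and its complement of measure $O(1/R)$ (handled by the uniform integrability from Lemma~\ref{lem:unif}); with that splitting your stronger claim does follow, but the one-line argument you give does not establish it.
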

\begin{proof} We apply Proposition~\ref{prop:reg-bound} and  let $\theta\to 0$.

\medskip

\textbf{Existence of weak limits.} For this step we need certain a priori estimates. By energy equality~\eqref{en:eq}, (A2) and~\eqref{FYeq} we get
\begin{equation}
\label{en:eq:imp}
\begin{split}&\frac{1}{2}\iO (u_n^\theta(\tau))^2\, dx+
 \int_{\Omega_\tau} c_A M(x,\nabla u_n^\theta)+ c_A M^*(x, A (x,\nabla u_n^\theta))\, dx\, dt+\\
&+\int_{\Omega_\tau}\theta m( \nabla u_n^\theta )+\theta m^*(\bar{\nabla} m(  {\nabla} u_n^\theta ))\,dx\,dt \leq  \int_{\Omega_\tau} T_n(f) u_n^\theta\, dx\, dt+\frac{1}{2} \iO (u_{0,n})^2dx.\end{split}
\end{equation}
To estimate the right-hand side we are going to apply the Fenchel-Young inequality~\eqref{inq:F-Y} and the~modular Poincar\'{e} inequality (Theorem~\ref{theo:Poincare}). For this let us consider an $N$-function $P:[0,\infty)\to[0,\infty)$ satisfying $\Delta_2$-condition and such that $P(s)\leq \frac{c_A}{2c_P}\inf_{x\in\Omega,\,\xi:\,|\xi|=s}M(x,\xi)$, where $c_P$ is the constant from the modular Poincar\'{e} inequality for $P$ (see Lemma~\ref{lem:constr} for a construction). Then on the right-hand side of~\eqref{en:eq:imp} we have
\[\begin{split}
\int_{\Omega_\tau} T_n(f) u_n^\theta\, dx\, dt&\leq \int_{\Omega_\tau} P^*(|T_n(f)|)\, dx\, dt+\int_{\Omega_\tau} P(|u_n^\theta|)\, dx\, dt\\
&\leq \int_{\Omega_\tau} P^*(|T_n(f)|)\, dx\, dt+c_P\int_{\Omega_\tau} P(|\nabla u_n^\theta|)\, dx\, dt\\
&\leq \int_{\Omega_\tau} P^*(|T_n(f)|)\, dx\, dt+\frac{c_A}{2}\int_{\Omega_\tau} M(x,\nabla u_n^\theta )\, dx\, dt.
\end{split}\]
Consequently, we infer that~\eqref{en:eq:imp} implies\begin{equation*}
\begin{split}&\frac{1}{2}\iO (u_n^\theta(\tau))^2\, dx+
\int_{\Omega_\tau} \frac{c_A}{2} M(x,\nabla u_n^\theta)+ c_A M^*(x, A (x,\nabla u_n^\theta))\, dx\, dt+\\
&+\int_{\Omega_\tau}\theta m( \nabla u_n^\theta )+\theta m^*(\bar\nabla m( {\nabla} u_n^\theta ))\,dx\,dt \leq  \int_{\Omega_\tau} P^*(|T_n(f)|)\, dx\, dt+\frac{1}{2} \iO   (u_{0,n})^2dx.\end{split}
\end{equation*}
Note that the right-hand side above is bounded for fixed $n$. Namely,
\begin{equation*}
\int_{\Omega_\tau} P^*(|T_n(f)|)\, dx\, dt+\frac{1}{2} \iO   (u_{0,n})^2dx\leq  \left(P^*(n)\cdot T+\frac{n^2}{2}\right)|\Omega|=:w_1(n).
\end{equation*}
When we take into account that $\tau$ is arbitrary, this observation implies
\begin{eqnarray}
&\sup_{\tau \in[0,T]}\|u_n^\theta(\tau)\|^2_{L^2(\Omega)}&\leq w_1(n),\label{apriori:utL2} \\
&\frac{c_A}{2}\int_{\OT}  M(x,\nabla u_n^\theta)\,dx\,dt&\leq w_1(n),\label{apriori:Mt}\\
&c_A\int_{\OT} M^*(x, A (x,\nabla u_n^\theta))\,dx\,dt&\leq w_1(n),\label{apriori:Mst}
\\
&\int_{\OT}  \theta m^*(\bar\nabla m(|{\nabla} u_n^\theta|))\,dx\,dt&\leq w_1(n).\label{apriori:mst} 
\end{eqnarray}
Therefore, there exist a subsequence of $\theta\to 0$, such that
\begin{eqnarray}
\label{conv:untLi}  u_n^\theta\xrightharpoonup* u_n\quad& \text{weakly-* in }& L^\infty(0,T;L^2(\Omega)),\\
\label{limDut}\nabla u_n^\theta \xrightharpoonup* \nabla u_n\quad& \text{weakly-* in }& L_M(\OT;\rn),
\end{eqnarray}
with some $u_n\in\VTMi$ and there exists $\alpha^n\in L_{M^*}(\OT;\rn)$, such that
\begin{equation}
\label{limaDut}A(\cdot,\nabla u_n^\theta) \xrightharpoonup* \alpha^n\quad \text{weakly-* in } L_{M^*}(\OT;\rn).\end{equation}

\medskip 

\textbf{Identification of the limit $\alpha^n$. Uniform estimates.} We fix arbitrary $n$ and show 
\begin{equation}\label{limsup<aln}\limsup_{\theta\to 0}\iOT A(x,\nabla u_n^\theta)\cdot\nabla u_n^\theta\,dx\,dt\leq \iOT \alpha^n \nabla u_n \,dx\,dt.\end{equation} 
Recall the weak formulation of the regularized problem~\eqref{weak-reg-bound} 
\begin{equation}\label{weak} -\iOT u_n^\theta\partial_t \vp dx\, dt- \iO u_n^\theta(0)\vp(0)\, dx+\iOT A_\theta(x,\nabla u_n^\theta)\cdot \nabla \vp\, dx\,dt= \iOT T_n(f)\vp\, dx\, dt\end{equation} 
holding for $\vp\in C_c^\infty([0,T)\times {\Omega})$, where in the first term on the left-hand side, due to~\eqref{conv:untLi}, we have
\begin{equation*}
\lim_{\theta\searrow 0} \iOT u_n^\theta\partial_t \vp\, dx\, dt=\iOT u_n \partial_t \vp\, dx\, dt.
\end{equation*}
Moreover, we prove that 
\begin{equation*}
\lim_{\theta\searrow 0}\iOT\theta\bar{\nabla}m(\nabla u_n^\theta)\cdot\nabla \vp\,dx\,dt=0.
\end{equation*}
   To get this, we split $\OT$ into
\[\Omega^\theta_{T,R}=\{(t,x)\in \OT:|\nabla u_n^\theta|\leq R\}\]
and its complement and consider the following integrals separately
\[\iOT\theta\bar{\nabla}m(\nabla u_n^\theta)\cdot\nabla \vp\,dx\,dt=\int_{\Omega^\theta_{T,R}}\theta\bar{\nabla}m(\nabla u_n^\theta)\cdot\nabla \vp\,dx\,dt+\int_{\OT\setminus\Omega^\theta_{T,R}}\theta\bar{\nabla}m(\nabla u_n^\theta)\cdot\nabla \vp\,dx\,dt.\]
To deal with the first term on the right-hand side above, we use continuity of $\bar{\nabla} m$ to obtain
\[\lim_{\theta\searrow 0}\int_{\Omega^\theta_{T,R}}\theta\bar{\nabla}m(\nabla u_n^\theta)\cdot\nabla \vp\,dx\,dt\leq \lim_{\theta\searrow 0} \left(\theta|\OT|\cdot\|\nabla \vp\|_{L^\infty(\OT;\rn)}\sup_{\xi:\,|\xi|\leq R}|\bar{\nabla}m(\xi)|\right)=0.\] 

As for the integral over $\OT\setminus\Omega^\theta_{T,R}$, let us notice that due to a priori estimate~\eqref{apriori:Mt}, the sequence $\{\nabla u_n^\theta\}_\theta$ is uniformly bounded in $L^1(\OT)$ and thus
\begin{equation}
\label{qtrc:meas}\sup_{0\leq\theta\leq 1}|\OT\setminus\Omega^\theta_{T,R}|\leq \frac{C}{R}.
\end{equation} 
Furthermore, since $m^*$ is an $N$-function, for $\theta\in(0,1)$ we have $m^*(\theta\cdot)\leq\theta m^*(\cdot).$ This together with  $L^1(\OT)$-bound~\eqref{apriori:mst} for 
$ \theta m^*(\bar\nabla m( {\nabla} u_n^\theta ))$, which is  uniform with respect to $\theta$, we get $L^1(\OT)$-bound  for 
$ \{ m^*(\theta\bar\nabla m( {\nabla} u_n^\theta ))\}_\theta$. Therefore, Lemma~\ref{lem:unif} implies the uniform integrability of $\{\theta  \bar\nabla m( {\nabla} u_n^\theta)\}_\theta$. Then, using~\eqref{qtrc:meas}, we obtain
\[
\int_{\OT\setminus\Omega^\theta_{T,R}}\theta\bar{\nabla}m(\nabla u_n^\theta)\cdot\nabla \vp\,dx\,dt\leq \|\nabla \vp\|_{L^\infty(\OT;\rn)} \sup_{\theta\in (0,1)}\int_{\OT\setminus\Omega^\theta_{T,R}}\theta|\bar{\nabla}m(\nabla u_n^\theta)|\,dx\,dt\xrightarrow[R\to\infty]{} 0.
\]

Therefore,  we can pass to the limit in the weak formulation of the regularized problem~\eqref{weak}. Because of~\eqref{limaDut} 
 we  obtain
\begin{equation} 
\label{lim:theta=0}-\iOT u_n \partial_t \vp dx\, dt- \iO u_{0,n}\vp(0)\, dx+\iOT \alpha^n\cdot \nabla  \vp\, dx= \iOT T_n(f)\vp\, dx\, dt. 
\end{equation}

Considering the energy equality~\eqref{en:eq} in the first term on the left-hand side we take into account the weak lower semi-continuity of $L^2$-norm and~\eqref{conv:untLi} and realize that
\[\|u_n^\theta(\tau)\|^2_{L^2(\Omega)}=\lim_{\epsilon\to 0} \frac{1}{\epsilon}\int_{\tau-\epsilon}^{\tau}\|u_n^\theta(s)\|^2_{L^2(\Omega)}ds\geq \lim_{\epsilon\to 0}\frac{1}{\epsilon} \int_{\tau-\epsilon}^{\tau}\|u_n (s)\|^2_{L^2(\Omega)}ds=\|u_n (\tau)\|^2_{L^2(\Omega)}.\]
When we erase the nonnegative term~$\iOT\theta\bar{\nabla}m(\nabla u_n^\theta)\cdot\nabla u_n^\theta\,dx\,dt$  in~\eqref{en:eq} and then pass to the limit with $\theta\searrow 0$, we get
\begin{equation}\label{en-eq-impl} \frac{1}{2}\|u_n (\tau)\|^2_{L^2(\Omega)}- \frac{1}{2}\|u_{0,n}\|^2_{L^2(\Omega)}+\limsup_{\theta\searrow 0}\iOT A (x,\nabla u_n^\theta)\cdot \nabla u_n^\theta\, dx\, dt\leq \iOT T_n(f) u_n \, dx\, dt.\end{equation}

By Lemma~\ref{lem:intbyparts}  {\em ii)} applied to~\eqref{lim:theta=0}  with $A=\alpha^n$,  $F=T_n(f)$, and $h(\cdot)= T_k(\cdot)$, we obtain 
\[ 
 -\int_{\OT} \left(\int_{u_{0,n}(x)}^{u_n (t,x)} T_k(\s)d\s\right) \partial_t  \xi \,dx\,dt =- \int_{\OT} \alpha^n\cdot \nabla (T_k(u_n)\xi) \,dx\,dt+\int_{\OT}T_n(f) T_k(u_n)\xi \,dx\,dt, 
\]
for every $\xi\in C_c^\infty([0,T)\times \overline{\Omega})$. Then taking~$\xi(t,x)=\vt(t)$, given by~\eqref{vt}, we get
\begin{equation}\begin{split}\label{przedlim}
&-\int_{\OT} \left(\int_{0}^{u_n (t,x)} T_k(\s)d\s-\int_{0}^{u_{0,n}(x)} T_k(\s)d\s\right) \partial_t  \vt    \,dx\,dt\\
&=-\int_{\OT}\alpha^n \cdot \nabla (T_k(u_n))\vt \,dx\,dt+\int_{\OT}T_n(f) T_k(u_n)\vt \,dx\,dt.\end{split}
\end{equation}
On the right-hand side we integrate by parts obtaining\[ -\int_{\OT} \left(\int_{u_{0,n}}^{u_n (t,x)} T_k(\s)d\s \right) \partial_t  \vt    \,dx\,dt=\int_{\OT}\partial_t \left(\int_{u_{0,n}}^{u_n (t,x)} T_k(\s)d\s \right)   \vt \,dx\,dt.\]
Then we pass to the limit with ${r}\to 0$, apply the Fubini theorem, and integrate over the time variable\[\begin{split}\lim_{{r}\to 0} \int_{\OT}\partial_t \left(\int_{u_{0,n}}^{u_n (t,x)} T_k(\s)d\s \right)   \vt \,dx\,dt&=  \int_{\Omega_\tau}\partial_t \left(\int_{u_{0,n}}^{u_n (t,x)} T_k(\s)d\s \right)   \,dx\,dt\\
&=  \int_{\Omega } \left(\int_{u_{0,n}}^{u_n (\tau,x)} T_k(\s)d\s \right)   \,dx.\end{split}\]

Passing with ${r}\to 0$ in~\eqref{przedlim} for a.e.~$\tau\in[0,T)$ we get
\[ 
 \int_{\Omega} \left(\int_{0}^{u_n (\tau,x)} T_k(\s)d\s-\int_{0}^{u_{0,n}(x)} T_k(\s)d\s\right) \,dx  =-\int_{\Omega_\tau}\alpha^n \cdot \nabla  T_k(u_n) \,dx\,dt+\int_{\Omega_\tau}T_n(f) T_k(u_n)  \,dx\,dt. 
\]
Applying the Lebesgue Monotone Convergence Theorem for $k\to\infty$ we obtain
\[\frac{1}{2}\|u_n(\tau)\|^2_{L^2(\Omega)}-\frac{1}{2}\|u_{0,n}\|^2_{L^2(\Omega)}=-\int_{\Omega_\tau}\alpha^n \cdot \nabla u_n \,dx\,dt+\int_{\Omega_\tau}T_n(f) u_n  \,dx\,dt,\]
which combined with~\eqref{en-eq-impl} gives~\eqref{limsup<aln}.

\medskip

\textbf{Identification of the limit $\alpha_n$. Conclusion by monotonicity argument.} Let us recall that $n$ is fixed and concentrate on proving  \begin{equation}
\label{lim=ca}A(x,\nabla u_n)=\alpha^n\qquad \text{a.e.}\quad\text{in}\quad \Omega_T.
\end{equation}
 Monotonicity assumption (A3) of~$A$ implies
\[(A(x,\eta)-A(x,\nabla u_n^\theta))\cdot(\eta-\nabla u_n^\theta)\geq 0\qquad\text{a.e.  in }\OT,\]
for any $\eta\in L^\infty(\OT;\rn)\subset E_M(\OT;\rn)$. Since $A(x,\eta)\in L_{M^*}(\OT,\rn)= (E_{M}(\OT,\rn))^*$, we pass to the limit with $\theta\searrow 0$ and take into account~\eqref{limsup<aln} to conclude that
\begin{equation}
\label{mono:int}
\iOT (A(x,\eta)-\alpha^n)\cdot(\eta-\nabla u_n)\,dx\,dt\geq 0.
\end{equation}

Let us define \begin{equation}
\label{omm}
\Omega_T^K=\{(t,x)\in\OT:\ |\nabla u_n|\leq K\quad\text{a.e. in }\OT\}.
\end{equation} 

We are going to show that
\begin{equation}
\label{lim:id:omj}
A(x,\nabla u_n)=\alpha^n\qquad \text{a.e.}\quad\text{in}\quad \Omega_T^j
\end{equation}
for arbitrary $j$. This implies the equality a.e. in $\Omega_T$, i.e.~\eqref{lim=ca}.

We fix arbitrary $0<j<i$ and $z\in L^\infty(\OT;\rn)$. Consider a parameter $h\in(0,1)$ and choose 
\[\eta=\nabla u_n\mathds{1}_{\Omega_T^i}+
hz\mathds{1}_{\Omega_T^j}.\]
Then in~\eqref{mono:int} we have
\begin{equation*}
 - \int_{\OT\setminus\Omega_T^i}(A(x,0)-\alpha^n) \nabla u_n \,dx\,dt +h\int_{ \Omega_T^j} (A(x,\nabla u_n+hz)-\alpha^n)z \, dx\,dt\geq  0.
\end{equation*} 
which becomes
\begin{equation}
\label{po-mon} 
 \int_{\OT\setminus\Omega_T^i} \alpha^n \cdot \nabla u_n \,dx\,dt +h\int_{ \Omega_T^j} (A(x,\nabla u_n+hz)-\alpha^n)z \, dx\,dt\geq  0,
\end{equation} 
because (A2) implies $A(x,0)=0$. The Fenchel-Young inequality applied to $|\alpha^n \cdot \nabla u_n|$ above ensures (via the Lebesgue Dominated Convergence Theorem) that the first integral on the left-hand side vanishes when $i\to \infty$. Therefore, after passing with $i\to \infty$ in~\eqref{po-mon}, we obtain \begin{equation*}
\int_{ \Omega_T^j} (A(x,\nabla u_n+hz)-\alpha^n)\cdot z\,  dx\, dt\geq  0\qquad\forall_{h\in(0,1)}
\end{equation*}
where $z$ and $j$ are fixed. We are going to pass to the limit with $h\to 0$. 

Note that
\[A(x,\nabla u_n+hz)\xrightarrow[h\to 0]{}A(x,\nabla u_n)\quad\text{a.e. in}\quad \Omega_T^j.\] 
Moreover, as $\{A(x,\nabla u_n+hz)\}_h$ is bounded on $\Omega_T^j$, Lemma~\ref{lem:M*<M}  results in
\[  M^*\left(x,A(x,\nabla u_n+hz)\right) \leq \frac{2}{c_A}  M\left(x,\frac{2}{c_A}(\nabla u_n+hz)\right).\]
The right-hand side is bounded, because $\{ M(x,\frac{2}{c_A}(\nabla u_n+hz))\}_h$ is uniformly bounded in $L^1(\Omega_T^j)$ (cf.~\eqref{LinfinEM} and~\eqref{omm}). Then also $\{M^*\left(x,A(x,\nabla u_n+hz)\right)\}_h$ is uniformly bounded in $L^1(\Omega^j_T)$. Hence, Lemma~\ref{lem:unif} gives uniform integrability of $\{A(x,\nabla u_n+hz)\}_h$ on $\Omega_T^j$. When we notice that $|\Omega_T^j|<\infty$, we can apply the Vitali Convergence Theorem (Theorem~\ref{theo:VitConv}) to get
\[A(x,\nabla u_n+hz)\xrightarrow[h\to 0]{}A(x,\nabla u_n)\quad\text{in}\quad L^1(\Omega_T^j;\rn).\] 
Thus
\begin{equation*} \int_{ \Omega_T^j} (A(x,\nabla u_n+hz)-\alpha^n)\,z\, dx\, dt\xrightarrow[h\to 0]{} \int_{ \Omega_T^j} (A(x,u_n)-\alpha^n)\,z \, dx\,dt.
\end{equation*}
Consequently,
\begin{equation*}  \int_{ \Omega_T^j} (A(x,\nabla u_n)-\alpha^n)\,z\,  dx\, dt\geq 0,
\end{equation*}
for any $z\in L^\infty(\OT;\rn)$. Let us take 
\[z=\left\{\begin{array}{ll}-\frac{A(x,\nabla u_n)-\alpha^n}{|A(x,\nabla u_n)-\alpha^n|}&\ \text{if}\quad A(x,\nabla u_n)-\alpha^n\neq 0,\\
0&\ \text{if}\quad A(x,\nabla u_n)-\alpha^n= 0.
\end{array}\right.\]
We obtain 
\begin{equation*}  \int_{ \Omega_T^j} |A(x,\nabla u_n)-\alpha^n| dx\,dt\leq 0,
\end{equation*}
hence~\eqref{lim:id:omj} holds. Consequently, we get \eqref{lim=ca}, which completes the proof.

\medskip

\textbf{Conclusion of the proof of Proposition~\ref{prop:bound}.} We pass to the limit in  the weak formulation of bounded regularized problem~\eqref{weak} due to~\eqref{conv:untLi}, \eqref{limDut}, \eqref{limaDut}, and~\eqref{lim=ca}, getting the existence of~$u_n\in\VTMi$ satisfying
\[
-\iOT u_n \partial_t \vp dx\, dt- \iO u_n(0)\vp(0)\, dx+\iOT A (x,\nabla u_n )\cdot \nabla \vp \, dx\, dt= \iOT T_n(f)\vp\, dx\, dt\quad\forall_{\vp\in C_c^\infty([0;T)\times \Omega)},\]
i.e.~\eqref{weak-reg}, which ends the proof. \end{proof}

\subsubsection*{Step 3. Convergence of truncations $T_k (u_n)$}
\begin{prop}\label{prop:convTk} Suppose $A$ and $M$ satisfy conditions (A1)-(A3) and (M). 
 Let $f\in L^1(\Omega_T)$, $u_0=u(x,0)\in L^1(\Omega)$, and $u_n \in\VTMi$ denote a weak solution to the problem~\eqref{eq:bound}. Let $k>0$ be arbitrary.

Then there exists $u\in\VTM$ such that, up to a subsequence, we have
\begin{eqnarray} T_k(u_n) &\xrightharpoonup{} &  T_k(u)\quad \text{ in } L^1(0,T;W^{1,1}_0(\Omega)),\label{conv:n:TkuL1}\\ 
T_k(u_n) &\xrightharpoonup{*} &  T_k(u)\quad \text{weakly-* in } L^\infty (\OT),\label{conv:n:TkuLi}\\
\nabla T_k(u_n) &\xrightharpoonup* &\nabla T_k(u)\quad \text{weakly-* in } L_M(\OT;\rn),\label{conv:n:TkutM}\\
 A(x,\nabla T_k(u_n)) &\xrightharpoonup*& \Ak \quad \text{weakly-* in } L_{M^*}(\OT;\rn),\label{conv:n:ATkutMs}
\end{eqnarray}
for some $\Ak\in L_{M^*}(\OT;\rn)$.
\end{prop}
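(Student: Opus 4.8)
The starting point is the a~priori estimates already available from Step~2. Testing the weak formulation~\eqref{weak-reg} of the problem solved by $u_n$ with (a regularization of) the truncation $T_k(u_n)$ — or rather invoking the energy identity that Lemma~\ref{lem:intbyparts} produces for $u_n$ with $h=T_k$ and $\xi=\vt$, exactly as was done for $u_n^\theta$ in Step~2 — one arrives for each fixed $k$ at an estimate of the form
\begin{equation*}
\tfrac12\|T_k(u_n)(\tau)\|_{L^2(\Omega)}^2+\int_{\Omega_\tau}A(x,\nabla u_n)\cdot\nabla T_k(u_n)\,dx\,dt=\int_{\Omega_\tau}T_n(f)\,T_k(u_n)\,dx\,dt,
\end{equation*}
whose right-hand side is bounded by $k\|f\|_{L^1(\OT)}$ uniformly in $n$. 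Using coercivity~(A2) this yields, uniformly in $n$ (but with constants depending on $k$),
\begin{equation*}
\sup_{\tau}\|T_k(u_n)(\tau)\|_{L^2(\Omega)}^2+\int_{\OT}M(x,\nabla T_k(u_n))\,dx\,dt+\int_{\OT}M^*\big(x,A(x,\nabla T_k(u_n))\big)\,dx\,dt\le C(k).
\end{equation*}
Here one must be slightly careful: $A(x,\nabla u_n)\cdot\nabla T_k(u_n)=A(x,\nabla T_k(u_n))\cdot\nabla T_k(u_n)$ a.e. because $\nabla T_k(u_n)=\nabla u_n\,\mathds{1}_{\{|u_n|<k\}}$ and $A(x,0)=0$ (from~(A2)), so the coercivity bound applies verbatim to the truncated gradient.

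\textbf{Extracting the limits.}
From the modular bound on $\nabla T_k(u_n)$ in $L_M(\OT;\rn)$ and the bound on $A(x,\nabla T_k(u_n))$ in $L_{M^*}(\OT;\rn)$ one obtains, along a subsequence, a weak-$*$ limit $\nabla T_k(u_n)\xrightharpoonup{*}G_k$ in $L_M$ and $A(x,\nabla T_k(u_n))\xrightharpoonup{*}\Ak$ in $L_{M^*}$; these use that $L_M=(E_{M^*})^*$ and $L_{M^*}=(E_M)^*$ together with the de~la~Vall\'ee-Poussin/Dunford--Pettis-type compactness in the Musielak--Orlicz setting (i.e. bounded modular $+$ $N$-function growth gives weak-$*$ sequential compactness). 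Simultaneously the $L^\infty(0,T;L^2)$ bound plus $\|T_k(u_n)\|_{L^\infty}\le k$ gives $T_k(u_n)\xrightharpoonup{*}$ some limit weakly-$*$ in $L^\infty(\OT)$, and the $L^1(0,T;W_0^{1,1})$ bound (which follows since $M(x,\cdot)$ controls $|\cdot|$ from below on a bounded domain, so $L_M\hookrightarrow L^1$) gives $T_k(u_n)\xrightharpoonup{}$ the same limit weakly in $L^1(0,T;W_0^{1,1}(\Omega))$, with $G_k$ identified as its spatial gradient. The only remaining point is the construction of the single function $u$ with $T_k(u_n)\to T_k(u)$ for every $k$: here one first shows $(u_n)$ is Cauchy in measure on $\OT$ — this is precisely where the comparison principle (Proposition~\ref{prop:comp-princ}) enters in the paper's scheme — so that, passing to a further diagonal subsequence, $u_n\to u$ a.e., whence the weak limit of $T_k(u_n)$ is $T_k(u)$ and $u\in\VTM$ (each $T_k(u)$ has gradient in $L_M$). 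Note the subsequence can be chosen independent of $k$ by the usual diagonal argument over $k\in\N$.

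\textbf{Main obstacle.}
The genuinely delicate step is producing the almost-everywhere (equivalently, in-measure) convergence of $u_n$ itself, since without the Aubin--Lions lemma — unavailable here because of the lack of growth/reflexivity — one cannot compactify in time by the classical route. The paper circumvents this via the comparison principle: applying Proposition~\ref{prop:comp-princ} to $u_n$ and $u_m$ with data $T_n(f),T_m(f)$ and $T_n(u_0),T_m(u_0)$ controls $\|u_n-u_m\|_{L^1(\OT)}$ (or at least $\|T_k(u_n)-T_k(u_m)\|$ after an equi-integrability truncation estimate) by $\|T_n(f)-T_m(f)\|_{L^1}+\|T_n(u_0)-T_m(u_0)\|_{L^1}\to 0$, giving a Cauchy sequence in $L^1$ and hence the a.e. limit. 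Everything else — the energy bound, the coercivity extraction of weak-$*$ limits, the identification of the distributional spatial gradient of the weak-$L^1$ limit — is routine once the a.e. convergence is in hand. I would therefore organize the proof as: (1) energy identity and $k$-dependent a~priori bounds; (2) weak/weak-$*$ compactness giving candidate limits; (3) comparison-principle argument for Cauchy-in-measure and the a.e. limit $u$; (4) identification of all four limits and $u\in\VTM$.
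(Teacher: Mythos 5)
Your bounds-and-compactness part matches the paper's argument: the paper derives exactly the estimate $c_A\int_{\OT}M(x,\nabla T_k(u_n^\theta))\,dx\,dt\le w_2(k)$, $c_A\int_{\OT}M^*(x,A(x,\nabla T_k(u_n^\theta)))\,dx\,dt\le w_2(k)$ with $w_2(k)=k(\|f\|_{L^1(\OT)}+\tfrac12\|u_0\|_{L^1(\Omega)})$ by applying Lemma~\ref{lem:intbyparts} with $h=T_k$, $\xi=\vt$ (it does so at the level of the regularized solutions $u_n^\theta$ and transfers the bounds to $u_n$ by weak lower semicontinuity, whereas you work directly with $u_n$ -- both are fine, though your displayed ``energy identity'' drops the initial-data term $\int_\Omega\int_{u_{0,n}}^{u_n(\tau)}T_k(\s)\,d\s\,dx$, a harmless slip since it only adds $\tfrac{k}{2}\|u_0\|_{L^1}$ to the bound), and the weak-$*$ limits are extracted exactly as you say, using $L_{M^*}=(E_M)^*$ and separability of $E_M$, $E_{M^*}$.

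The genuine gap is in your step (3). You claim the comparison principle (Proposition~\ref{prop:comp-princ}) ``controls $\|u_n-u_m\|_{L^1(\OT)}$ by $\|T_n(f)-T_m(f)\|_{L^1}+\|T_n(u_0)-T_m(u_0)\|_{L^1}$''. The comparison principle is purely qualitative: it yields $v^1\le v^2$ a.e. \emph{only when the data are ordered}, $f^1\le f^2$ and $v_0^1\le v_0^2$; it gives no quantitative $L^1$-contraction, and such a contraction estimate is nowhere proved in the paper. Moreover it cannot even be applied directly to the pair $u_n,u_m$, because for sign-changing $f$ the symmetric truncations $T_n(f)$, $T_m(f)$ are not ordered in $n$. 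This is precisely why the paper (in its Step~5, Proposition~\ref{prop:contr:rad}) introduces the asymmetric truncations $T^{k,l}$ and the doubly indexed solutions $u^{k,l}$: for these the data \emph{are} ordered, the comparison principle yields monotonicity of $u^{k,l}$ in each index, the iterated pointwise monotone limits exist a.e., and the a.e. limit is identified with $u$ via uniqueness of the weak limit~\eqref{conv:n:TkuL1}. Note also that in the paper the a.e. convergence is not part of the proof of Proposition~\ref{prop:convTk} at all -- Step~3 only extracts the weak-$*$ limits and the function $u$, and the a.e. convergence comes afterwards; your reorganization (a.e. convergence first, then identification of the limits as $T_k(u)$) is legitimate in principle, but only if the a.e.-convergence mechanism is repaired along the lines of the monotone $u^{k,l}$ construction rather than the unproven $L^1$-contraction you invoke.
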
 
\begin{proof} We apply Lemma~\ref{lem:intbyparts}  {\em ii)} to $u_n^\theta$ solving~\eqref{weak-reg-bound},  for which we know $T_k(u_n^\theta)\in V_T^M$ and $u_n^\theta(t,x)\in L^\infty([0,T];L^1(\Omega))$. Consider Lemma~\ref{lem:intbyparts}  with $A=A_\theta(x,\nabla u_n^\theta)$, $F=T_n(f)$, $h(\cdot)=T_k(\cdot)$, and $\xi(t,x)=\vt(t),$ defined in~\eqref{vt}, we obtain
\[\begin{split}
&-\int_{\OT} \left(\int_{u_{0,n}(x)}^{u_n^\theta(t,x)} T_k(\s)d\s\right) \partial_t (\vt)   \,dx\,dt+\int_{\OT}A_\theta(x,\nabla u_n^\theta)\cdot \nabla (T_k(u_n^\theta)\vt) \,dx\,dt\\
&=\int_{\OT}T_n(f) T_k(u_n^\theta)\vt \,dx\,dt.\end{split}
\] 

When we pass to the limit with ${r}\to 0$ (cf.~\eqref{przedlim}), for a.e. $\tau\in [0,T]$ we get
\[\begin{split}
&\int_{\Omega } \left(\int_{0}^{u_n^\theta(\tau,x)} T_k(\s)d\s-\int_{0}^{u_{0,n}(x)} T_k(\s)d\s\right)  \,dx +\int_{\Omega_\tau}A_\theta(x,\nabla u_n^\theta) \cdot \nabla  T_k(u^\theta_n) \,dx\,dt\\
&= \int_{\Omega_\tau}T_n(f) T_k(u^\theta_n)  \,dx\,dt,\end{split}
\]
and consequently
\[\begin{split}
&\frac{1}{2}\|T_k \left( u_n^\theta(\tau)\right)\|^2_{L^2(\Omega)} -\frac{1}{2}\|T_k \left( u_{0,n}\right)\|^2_{L^2(\Omega)}+\int_{\Omega_\tau}A(x,\nabla T_k (u_n^\theta)) \cdot \nabla T_k(u_n^\theta) \,dx\,dt\\
&+\int_{\Omega_\tau}\theta \bar{\nabla}m (\nabla u_n^\theta) \cdot \nabla T_k(u_n^\theta) \,dx\,dt= \int_{\Omega_\tau}T_n(f) T_k(u^\theta_n)\,dx\,dt.\end{split}\]
Applying further~\eqref{FYeq} and (A2) we get
\[\begin{split}
&\frac{1}{2}\|T_k \left( u_n^\theta(\tau)\right)\|^2_{L^2(\Omega)} -\frac{1}{2}\|T_k \left( u_{0,n}\right)\|^2_{L^2(\Omega)}+\int_{\Omega_\tau} c_A M(x,\nabla  T_k (u_n^\theta))+ c_A M^*(x, A (x, \nabla T_k (u_n^\theta)))\,dx\,dt\\
&+\int_{\Omega_\tau}\theta m(|\nabla  T_k (u_n^\theta)|)+\theta m^*(\bar{\nabla} m(| {\nabla}  T_k (u_n^\theta)|))\,dx\,dt\leq \int_{\Omega_\tau}T_n(f) T_k(u^\theta_n)\,dx\,dt\leq k\|f\|_{L^1(\OT)}  .\end{split}\]
When we notice that \[\frac{1}{2}\|T_k \left( u_{0,n}\right)\|^2_{L^2(\Omega)}\leq \frac{k}{2}\|  u_{0,n} \|_{L^1(\Omega)} = \frac{k}{2}\|  T_n(u_{0}) \|_{L^1(\Omega)}   \] and since $\tau\in( 0,T)$ is arbitrary, for \[w_2(k):= k\left(\|f\|_{L^1(\OT)}+\frac{1}{2}\|  u_{ 0} \|_{L^1(\Omega)}\right),\] we obtain
\begin{eqnarray}
&c_A\int_{\Omega_\tau}  M(x,\nabla  T_k (u_n^\theta)) \,dx\,dt\leq w_2(k),\nonumber\\
&c_A\int_{\Omega_\tau}  M^*(x, A (x, \nabla T_k (u_n^\theta)))\,dx\,dt\leq w_2(k),\label{apriori:thetai}\\
&\int_{\Omega_\tau}\theta m(|\nabla  T_k (u_n^\theta)|) \,dx\,dt\leq w_2(k).\nonumber
\end{eqnarray}
Then, due to~\eqref{LminEM}, for each fixed~$\theta\in(0,1)$\begin{equation*}
\nabla T_k(u_n^\theta)\in L_m(\OT;\rn)\subset E_M(\OT;\rn).
\end{equation*} 
Moreover, a priori estimates~\eqref{apriori:utL2}, 
\eqref{apriori:Mt}, \eqref{apriori:Mst}, \eqref{apriori:mst}, and \eqref{apriori:thetai} and weak lower semi-continuity of~a~convex functional together imply existence of $u\in \VTM$ such that \eqref{conv:n:TkuL1},\eqref{conv:n:TkuLi},\eqref{conv:n:TkutM} hold, and existence of $\Ak$ such  that~\eqref{conv:n:ATkutMs} holds.\end{proof}

\subsubsection*{Step 4. Controlled radiation }
 
 \begin{prop}\label{prop:contr:rad:n}
 Suppose $A$ and $M$ satisfy conditions (A1)-(A3) and (M), and $f\in L^1(\Omega)$. Assume further that $u_n$ is a weak solution to~\eqref{eq:bound}, $n>0$. Then
\begin{equation}
\lim_{l\to\infty} |\{|u_n|>l\}|=0.\label{conv:umeas:n}
\end{equation} 
and \begin{equation}
 \label{eq:contr:rad:n}
 \lim_{l\to\infty}\limsup_n\int_{\{l<|u_n|<l+1\}} A(x,\nabla u_n)\nabla u_n\,dx\,dt=0.
 \end{equation}
\end{prop}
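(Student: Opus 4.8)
The plan is to exploit the energy estimates available for the truncated problem~\eqref{eq:bound}, testing the weak formulation with suitable truncations of $u_n$. First I would establish~\eqref{conv:umeas:n}: testing~\eqref{weak-reg} effectively with $T_l(u_n)$ (rigorously, using the integration-by-parts formula Lemma~\ref{lem:intbyparts} with $h(\cdot)=T_l(\cdot)$ and $\xi$ approximating $\mathds{1}_{[0,\tau)}$ as in~\eqref{przedlim}) and using (A2) gives, after discarding the nonnegative term $c_A\int M^*(x,A(x,\nabla T_l(u_n)))$,
\[
\frac12\|T_l(u_n(\tau))\|_{L^2(\Omega)}^2+c_A\int_{\Omega_\tau}M(x,\nabla T_l(u_n))\,dx\,dt\leq l\big(\|f\|_{L^1(\OT)}+\tfrac12\|u_0\|_{L^1(\Omega)}\big).
\]
Since $|\{|u_n(\tau)|>l\}|\leq l^{-2}\|T_l(u_n(\tau))\|_{L^2(\Omega)}^2$, integrating in $\tau$ yields $|\{|u_n|>l\}|\leq C/l$, uniformly in $n$; letting $l\to\infty$ gives~\eqref{conv:umeas:n} (in fact uniformly in $n$, which is what we need below).

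For~\eqref{eq:contr:rad:n} I would use the test function $\psi_l$ from~\eqref{psil}, or equivalently test the renormalized-type identity with $h=T_{l+1}-T_l$ (a function whose derivative is $\mathds{1}_{\{l<|s|<l+1\}}$) and $\xi\to\mathds{1}_{[0,\tau)}$. Applying Lemma~\ref{lem:intbyparts} to the weak formulation~\eqref{weak-reg} with this $h$ and passing $r\to0$ exactly as in the derivation of~\eqref{przedlim}, the parabolic term produces $\int_\Omega\Theta_l(u_n(\tau))\,dx-\int_\Omega\Theta_l(u_{0,n})\,dx$ where $\Theta_l(s)=\int_0^s(T_{l+1}-T_l)(\sigma)\,d\sigma\geq0$, and the diffusion term produces exactly $\int_{\{l<|u_n|<l+1\}}A(x,\nabla u_n)\cdot\nabla u_n\,dx\,dt$. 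Hence
\[
\int_{\{l<|u_n|<l+1\}}A(x,\nabla u_n)\cdot\nabla u_n\,dx\,dt\leq \int_\Omega\Theta_l(u_{0,n})\,dx+\int_{\OT}|T_n(f)|\,|(T_{l+1}-T_l)(u_n)|\,dx\,dt.
\]
The second term is bounded by $\int_{\{|u_n|>l\}}|f|\,dx\,dt$, which tends to $0$ as $l\to\infty$ uniformly in $n$ by~\eqref{conv:umeas:n} together with equi-integrability of the single function $f\in L^1(\OT)$ (absolute continuity of the integral); the first term is bounded by $\int_{\{|u_0|>l\}}|u_0|\,dx$ since $0\leq\Theta_l(s)\leq |s|\mathds{1}_{\{|s|>l\}}$ and $|u_{0,n}|\leq|u_0|$, and this also tends to $0$. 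Taking $\limsup_n$ and then $l\to\infty$ gives~\eqref{eq:contr:rad:n}.

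The main technical obstacle is making the choice of test function rigorous: $T_{l+1}-T_l$ (equivalently $\psi_l$) is admissible as the $h$ in Lemma~\ref{lem:intbyparts} since it is Lipschitz with compactly supported derivative, but one must carry out the time-regularization step ($\xi=\vartheta^{\tau,r}$, then $r\to0$) to handle the $\partial_t$ term and justify the appearance of $\int_\Omega\Theta_l(u_n(\tau))\,dx$; this is precisely the computation already done for~\eqref{przedlim}, applied with $h=T_{l+1}-T_l$ instead of $h=T_k$. A minor point is that all bounds must be tracked for their uniformity in $n$ so that $\limsup_n$ can be taken before $l\to\infty$; this is automatic because the right-hand sides above depend only on $f$ and $u_0$, not on $n$.
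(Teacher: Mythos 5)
Your proposal is correct, and for the radiation control \eqref{eq:contr:rad:n} it is essentially the paper's argument: test with $G_l(u_n)=(T_{l+1}-T_l)(u_n)$ times a time cutoff (the paper uses $\phi_r$ from \eqref{phidelta} and discards the nonnegative parabolic contribution via $\partial_t\phi_r\le 0$, you use $\vartheta^{\tau,r}$ and discard $\int_\Omega\Theta_l(u_n(\tau))\,dx\ge 0$ — an immaterial difference), then bound the two data terms by $\int_{\{|u_0|>l\}}|u_0|\,dx$ and $\int_{\{|u_n|>l\}}|f|\,dx\,dt$ exactly as in the paper; your explicit remark that the measure bound must be uniform in $n$ so that absolute continuity of $\int|f|$ yields smallness uniformly in $n$ is the (tersely stated) point the paper relies on as well. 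Where you genuinely diverge is \eqref{conv:umeas:n}: the paper argues "elliptically", constructing via Lemma~\ref{lem:constr} an auxiliary $\Delta_2$ $N$-function $\dm\le\inf_{x,|\xi|=s}M(x,\xi)$, applying Chebyshev and the modular Poincar\'e inequality (Theorem~\ref{theo:Poincare}) to $\nabla T_l(u_n)$ together with the a priori bound \eqref{apriori:thetai}, which gives $|\{|u_n|>l\}|\le C\,l/\dm(l)\to 0$; you instead exploit the parabolic term, keeping the $L^\infty(0,T;L^2)$ control of $T_l(u_n)$ from the energy identity (via Lemma~\ref{lem:intbyparts} with $h=T_l$, $\xi=\vartheta^{\tau,r}$, as in \eqref{przedlim}, using $\int_0^s T_l(\sigma)\,d\sigma\ge\tfrac12 T_l(s)^2$) and Chebyshev in $L^2$, which gives the cleaner rate $|\{|u_n|>l\}|\le C/l$, again uniformly in $n$. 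Your route is more elementary — it avoids both Lemma~\ref{lem:constr} and the modular Poincar\'e inequality and only uses the structure already present in the derivation of \eqref{przedlim} — at the price of being specific to the parabolic setting, whereas the paper's argument is the one that also survives without the time derivative (and is the natural continuation of the elliptic scheme it builds on). Both are valid, and both deliver the uniformity in $n$ needed before taking $\limsup_n$.
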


\begin{proof}  To prove~\eqref{conv:umeas:n} we note that if $\dm$ is an $N$-function satisfying $\Delta_2$-condition such that $\dm(s)\leq \inf_{x\in\Omega,\,\xi:|\xi|=s} M(x,\xi)$ (see Lemma~\ref{lem:constr} for a construction)  for we have\[|\{|u_n|\geq l\}|=|\{|T_l(u_n)|= l\}|=|\{|T_l(u_n)|\geq l\}|=|\{\dm(|T_l(u_n)|)\geq \dm(l)\}|.\]
Moreover, for $l>0$ we have 
\begin{equation*}
\begin{split}
|\{|u_n|\geq l\}|&\leq \int_{\OT} \frac{\dm(|T_l(u_n)|)}{\dm(l)} dx\,dt\leq \frac{c(N,\Omega,T)}{\dm(l)} \int_\OT\dm( |\nabla T_l(u_n)|)dx\,dt\leq \\
&\leq  \frac{c(N,\Omega,T)}{\dm(l)} \int_\OT M(x,  \nabla T_l(u_n) )dx\,dt\leq \\&
\leq \frac{C(M,N,\Omega,T)}{\dm(l)}  \cdot l \left(\|f\|_{L^1(\OT)}+\frac{1}{2}\|  u_{ 0} \|_{L^1(\Omega)}\right) \leq \\& \leq C(f,u_0,M,N,\Omega,T)\frac{l}{\dm(l)}  \xrightarrow[ l\to \infty]{}0 
.\end{split}\end{equation*}
In the above estimates we apply (respectively) the Chebyshev inequality, the Poincar\'{e} inequality (Theorem~\ref{theo:Poincare}), a priori estimate~\eqref{apriori:thetai} and the facts that $f,u_0\in L^1(\Omega)$ and that $\dm$ is an $N$-function (cf.~Definition~\ref{def:Nf}).

To prove~\eqref{eq:contr:rad:n}, we consider nonincreasing functions $\phi_{r}\in C_c^\infty([0,T))$, given by~\eqref{phidelta}, and 
\[G_l(s):=T_{l+1}(s)-T_{l}(s).\]

Since $u_n\in \VTMi$ is a weak solution to~\eqref{eq:bound}, we can use $\vp(t,x)=G_l(u_n(t,x))\phi_{r}(t)$ as a test function and obtain
\begin{equation*}
\iOT (\partial_t u_n) G_l(u_n) \phi_{r} \, dx\,dt+\int_{\{l<|u_n|<l+1\}} A(x,\nabla u_n)\nabla u_n\,\phi_{r}\, dx\,dt=\iOT T_n(f)G_l(u_n) \phi_{r} \, dx\,dt.
\end{equation*}
Notice that on the left-hand side above we have
\[\begin{split}
\int_0^T (\partial_t u_n) G_l(u_n) \phi_{r} \,dt&=
\int_0^T \partial_t \left(\int_0^{u_n} G_l(s)ds\right)\, \phi_{r} \,dt\\
&=
-\phi_{r}(0)\int_0^{u_{0,n}}  G_l(s)ds -
\int_0^T  \int_0^{u_n} G_l(s)ds \,\partial_t \phi_{r} \,dt.\end{split}\]
Moreover, $\int_0^{u_n} G_l(s)ds\geq 0$ and $\partial_t \phi_{r}\leq 0$, hence \[-\int_0^T  \int_0^{u_n} G_l(s)ds \,\partial_t \phi_{r} \,dt\geq 0\]
and consequently
\[ \int_{\{l<|u_n|<l+1\}} A(x,\nabla u_n)\nabla u_n\,\phi_{r}\, dx\,dt\leq \iOT T_n(f)G_l(u_n) \phi_{r} \, dx\,dt+\iO \phi_{r}(0)\int_0^{u_{0,n}}  G_l(s)ds\,dx.\]

Furthermore, to infer that the right-hand side above tends to zero when $l\to\infty$, it suffices to~observe that
\[\iO\int_0^{|u_{0,n}|} |G_l(s)|\,ds\,dx\leq \iO\int_0^{|u_{0,n}|} \mathds{1}_{\{s>l\}}\,ds\,dx=\int_{\{||u_{0,n}|-l|>0\}} \left(|u_{0,n}|-l\right) dx \xrightarrow[l\to\infty]{}0\]
and
\[\iOT T_n(f)G_l(u_n) \phi_{r} \, dx\,dt\leq \int_{\{|u_n|>l\}} |f|\, dx\,dt\xrightarrow[l\to\infty]{}0.\] 
Therefore,~\eqref{eq:contr:rad:n} follows.\end{proof}

\subsubsection*{Step 5. Almost everywhere limit}

 \begin{prop}\label{prop:contr:rad}
 Suppose $A$ and $M$ satisfy conditions (A1)-(A3) and (M), and $f\in L^1(\Omega)$. Assume further that $u_n$ is a weak solution to~\eqref{eq:bound}, $n>0$.  For the  function $u \in \VTM $  coming from Proposition~\ref{prop:convTk} we have
\begin{equation}
 u_n\to u \quad  a.e.\ \text{in}\ \OT,\label{conv:usae}\end{equation}
and
\begin{equation}
\lim_{l\to\infty} |\{|u|>l\}|=0.\label{conv:umeas}
\end{equation} 
\end{prop}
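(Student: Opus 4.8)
The statement to prove is Proposition~\ref{prop:contr:rad}: almost everywhere convergence $u_n\to u$ in $\OT$ together with $|\{|u|>l\}|\to 0$. Since Proposition~\ref{prop:convTk} already gives weak convergence of the truncations $T_k(u_n)\rightharpoonup T_k(u)$, the whole difficulty is to upgrade weak convergence to a.e. convergence; once we have a.e. convergence, \eqref{conv:umeas} follows immediately from \eqref{conv:umeas:n} by Fatou's lemma (the function $u$ is measurable and finite a.e., and $\mathds{1}_{\{|u|>l\}}\le\liminf_n\mathds{1}_{\{|u_n|>l\}}$ up to the usual care with strict inequalities, so $|\{|u|>l\}|\le\liminf_n|\{|u_n|>l\}|$, and the latter is controlled uniformly by $C\,l/\dm(l)\to 0$). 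So I will concentrate on \eqref{conv:usae}.

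The plan for \eqref{conv:usae} is to use the comparison principle (Proposition~\ref{prop:comp-princ}) to control the Cauchy-difference of the approximating sequence. Fix $n,m\in\N$ with, say, $m\le n$. I would like to compare $u_n$ and $u_m$, but they solve problems with different data $T_n(f),T_m(f)$ and $T_n(u_0),T_m(u_0)$, which are not ordered pointwise. The standard device is to introduce, for each pair, the solutions $v^{n,m}_\pm$ of the same equation with data $\max(T_n(f),T_m(f))$, resp.\ $\min(T_n(f),T_m(f))$, and correspondingly ordered initial data; by the comparison principle $v^{n,m}_-\le u_n,u_m\le v^{n,m}_+$ a.e., hence
\[
|u_n-u_m|\le v^{n,m}_+-v^{n,m}_-\qquad\text{a.e. in }\OT .
\]
Then one estimates $\|v^{n,m}_+-v^{n,m}_-\|_{L^1(\OT)}$ in terms of $\|T_n(f)-T_m(f)\|_{L^1(\OT)}+\|T_n(u_0)-T_m(u_0)\|_{L^1(\Omega)}$ — this is the classical $L^1$-contraction/stability estimate for renormalized solutions, obtained by testing the renormalized formulation (R2) with $h=\psi_l$ (the cut-off from \eqref{psil}) applied to $v^{n,m}_+-v^{n,m}_-$ and a time cut-off, then sending $l\to\infty$ and using the radiation control (R3)/Proposition~\ref{prop:contr:rad:n} to kill the leading-term contribution on $\{l<|\cdot|<l+1\}$. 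Since $T_n(f)\to f$ in $L^1(\OT)$ and $T_n(u_0)\to u_0$ in $L^1(\Omega)$, both are Cauchy, so $(u_n)_n$ is Cauchy in $L^1(\OT)$; it therefore converges in $L^1$ and, along a subsequence, a.e. Finally one identifies the limit with the $u$ from Proposition~\ref{prop:convTk}: for each fixed $k$, $T_k(u_n)\to T_k(\lim u_n)$ a.e.\ (continuity of $T_k$) and also $T_k(u_n)\rightharpoonup T_k(u)$ weakly, and weak limits are unique, so $T_k(u)=T_k(\lim_n u_n)$ a.e.\ for every $k$, whence $u=\lim_n u_n$ a.e.\ in $\OT$.

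The main obstacle I expect is the rigorous derivation of the $L^1$-stability estimate within the present non-reflexive, anisotropic Musielak--Orlicz framework: the comparison principle is stated (Proposition~\ref{prop:comp-princ}) but the quantitative contraction needs the renormalized formulation to be tested with functions that are only in $\VTMi$ with the right time-support, and the limiting argument $l\to\infty$ must be justified using exactly the radiation-control estimate \eqref{eq:contr:rad:n} (uniform in $n$) rather than any growth condition on $M$. One must be careful that the difference $v^{n,m}_+-v^{n,m}_-$ is an admissible test object, that the parabolic (time-derivative) term produces the correct nonnegative boundary contribution after the Landes-type time regularization $g_\mu$ from \eqref{gmu}, and that the leading-order term $\int A(x,\nabla v_+)\cdot\nabla(\psi_l(v_+-v_-))$ is genuinely controlled — this is where (A3) (monotonicity, giving the right sign) combined with the radiation bound is used. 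A secondary technical point is extracting a \emph{single} subsequence that works for all $k$ simultaneously in the final identification step, which is handled by a diagonal argument over $k\in\N$. Once these pieces are in place, \eqref{conv:usae} and then \eqref{conv:umeas} follow as described.
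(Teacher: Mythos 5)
Your proposal is correct in substance but follows a genuinely different route from the paper. The paper also builds everything on the comparison principle (Proposition~\ref{prop:comp-princ}), but uses it only \emph{qualitatively}: it introduces the asymmetric truncations $T^{k,l}$, lets $u^{k,l}$ solve the problem with data $T^{k,l}(f)$, $T^{k,l}(u_0)$, and observes that comparison makes the family monotone in each index ($u^{k',l}\leq u^{k,l}\leq u^{k,l'}$ for $k<k'$, $l<l'$); the iterated limits then exist a.e.\ by monotone convergence (finiteness coming from the $n$-uniform bound behind~\eqref{conv:umeas:n}), the diagonal $u_n=u^{n,n}$ is squeezed, and the limit is identified with $u$ through~\eqref{conv:n:TkuL1} — no quantitative estimate is ever produced. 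You instead extract a quantitative $L^1$-contraction and conclude that $(u_n)$ is Cauchy in $L^1(\OT)$; note that your $v^{n,m}_\pm$, with data $\max(T_nf,T_mf)$ and $\min(T_nf,T_mf)$, are exactly the paper's asymmetric-truncation solutions $u^{m,n}$ and $u^{n,m}$ (for $m\le n$), so their existence is covered by the same remark as in the paper. The one caveat is that the $L^1$-stability estimate you invoke is not stated anywhere in the paper as a citable lemma: it is precisely the penultimate display in the proof of Proposition~\ref{prop:comp-princ} (Appendix~\ref{ssec:Comp}), before the sign of the right-hand side is used, so you would have to re-run that proof keeping the data terms $f^1-f^2$ and $v^1_0-v^2_0$ rather than discarding them — a real but routine piece of work, using exactly the ingredients you list ((A3), the Landes regularization, and the radiation control, all valid here since the $u_n$ and $v^{n,m}_\pm$ are renormalized solutions). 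What each approach buys: yours yields the stronger conclusion that the whole sequence converges in $L^1(\OT)$ (a.e.\ convergence then only along a subsequence, which is harmless since Proposition~\ref{prop:convTk} is itself stated up to a subsequence), while the paper's monotonicity argument avoids any quantitative estimate at the price of the double-index bookkeeping. Your derivation of~\eqref{conv:umeas} via Fatou coincides with the paper's one-line deduction from~\eqref{conv:umeas:n}, and is legitimate because the bound $C\,l/\dm(l)$ in Step~4 is uniform in $n$; also, your final diagonal extraction over $k$ is unnecessary, since $L^1$-convergence of $u_n$ already gives $T_k(u_n)\to T_k(\lim u_n)$ in $L^1(\OT)$ for every $k$ at once.
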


\begin{proof}  To prove~\eqref{conv:umeas} we apply the comparison principle (Proposition~\ref{prop:comp-princ}). We can do it since weak solutions $u_n$ are renormalized ones.

We define asymmetric truncations as follows\[T^{k,l}(f)(x)=\left\{\begin{array}{ll}
-k& f\leq -k,\\
f & |f|\leq k,\\
l&  f \geq l.
\end{array}\right.\]
 Let $u^{a,b}$ denote  a weak solution to \[u_t-{\dv }A(x,\nabla u )= T^{a,b}(f),\qquad u(0,x)=T^{a,b}(u_0),\] which exists according to Proposition~\ref{prop:bound}. 

When $0<l<l'$ and $0<k<k'$, Proposition~\ref{prop:comp-princ} implies that
\begin{equation}
\label{order}
u^{k',l}\leq u^{k,l}\leq u^{k,l'}
\end{equation}
for a.e. $(t,x)\in\OT$. Due to the monotonicity of $(u^{k,l})_l$ we deduce that $\lim_{l\to\infty} u^{k,l}$ exists a.e. in~$\OT$. Let us denote it by $u^{k,\infty}$. On the other hand, taking into account~\eqref{order} we infer that $u^{k',\infty}\leq u^{k,\infty}$ a.e. in~$\OT$. Thus,  there exists the limit $u^{\infty,\infty}=\lim_{k\to\infty} u^{k,\infty}$ a.e. in~$\OT$. Consequently, due to~the~uniqueness of the limit (cf.~\eqref{conv:n:TkuL1}), we get the convergence~\eqref{conv:usae}.

Having~\eqref{conv:usae},~\eqref{conv:umeas} is a direct consequence of~\eqref{conv:umeas:n}.\end{proof}

\subsubsection*{Step 6. Identification of the limit of $A(x,\nabla T_k(u_n))$}

In this step we employ the time regularization~\eqref{gmu} and monotonicity trick to identify the limit~\eqref{conv:n:ATkutMs}.

\begin{prop} \label{prop:convTkII} Suppose $A$ and $M$ satisfy conditions (A1)-(A3) and (M). Suppose $u_n$ is a weak solution to~\eqref{eq:reg-bound}, $k>0$ is arbitrary, and $f\in L^1(\Omega)$.  We have\begin{equation}
A(x,\nabla T_k(u_n)) \xrightharpoonup* A(x,\nabla T_k(u))\quad \text{weakly-* in } L_{M^*}(\OT;\rn).\label{conv:n:AT:id}
\end{equation}
\end{prop}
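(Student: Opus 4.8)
The plan is to use the classical Minty--Browder monotonicity trick localized on the level sets $\Omega_T^K$ where $|\nabla T_k(u)| \le K$, together with the almost everywhere convergence $u_n \to u$ from Proposition~\ref{prop:contr:rad} and the time-regularization $(\cdot)_\mu$ introduced in~\eqref{gmu}. The key quantity to control is
\[
\limsup_{n\to\infty}\int_{\OT} A(x,\nabla T_k(u_n))\cdot\nabla T_k(u_n)\,dx\,dt \le \int_{\OT}\Ak\cdot\nabla T_k(u)\,dx\,dt,
\]
after which the monotonicity argument carried out exactly as in the proof of Proposition~\ref{prop:bound} (inequality~\eqref{mono:int} and the subsequent localization on $\Omega_T^j$) yields $\Ak = A(x,\nabla T_k(u))$ a.e., which is~\eqref{conv:n:AT:id}. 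So the whole proposition reduces to establishing this $\limsup$ inequality.

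First I would test the equation~\eqref{weak-reg} for $u_n$ with a function built from the time-regularized truncation $(T_k(u_n))_\mu$. More precisely, the natural test function is of the form $h(u_n)\big(T_k(u_n) - (T_k(u))_\mu\big)\phi_r(t)$ or, after applying the renormalized formulation of Lemma~\ref{lem:intbyparts}, one works with $\varphi = \big(T_k(u_n)_\mu - T_k(u)_\mu\big)$ type test functions; the role of the regularization in time is to make $\partial_t$ of the test function admissible (it solves the ODE~\eqref{gmu:ode}) while keeping the spatial part in $L_M$. The evolution term then produces, via the integration-by-parts formula and the sign of $\partial_t\phi_r$, a term that is asymptotically nonnegative as $n\to\infty$, then $\mu\to\infty$, then $r\to 0$; this is the standard "the time derivative contributes the right sign" mechanism. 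The diffusion term gives $\int A(x,\nabla T_k(u_n))\cdot(\nabla T_k(u_n) - \nabla(T_k(u))_\mu)$, and the right-hand side $\int T_n(f)\cdot(\text{test})$ converges to zero by dominated convergence using~\eqref{conv:usae} and $T_k(u)_\mu \to T_k(u)$. Splitting off $\int \Ak\cdot(\nabla T_k(u) - \nabla T_k(u)_\mu)$ (which vanishes as $\mu\to\infty$ by the modular convergence $\nabla(T_k(u))_\mu\xrightarrow{M}\nabla T_k(u)$ and $\Ak\in L_{M^*}=(E_M)^*$) leaves precisely the desired $\limsup$ bound.

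A subtlety is the interaction between the truncation level $k$ and the function $u$ itself: on the set $\{|u|=k\}$ one does not have $\nabla u_n \to \nabla T_k(u)$, so one must be careful to work with $T_k(u_n)$ rather than $u_n$ throughout and use that $\nabla T_k(u_n)=0$ a.e. on $\{|u_n|\ge k\}$; the almost everywhere convergence $u_n\to u$ together with~\eqref{conv:umeas} guarantees $\mathds{1}_{\{|u_n|<k\}}\to\mathds{1}_{\{|u|<k\}}$ a.e. (off the null set $\{|u|=k\}$), which is what lets the passages to the limit go through. Once the $\limsup$ inequality holds, one copies verbatim the monotonicity-and-localization argument from Proposition~\ref{prop:bound}: from
\[
\int_{\OT}\big(A(x,\eta)-\Ak\big)\cdot\big(\eta - \nabla T_k(u)\big)\,dx\,dt\ge 0
\]
for all $\eta\in L^\infty(\OT;\rn)$, choose $\eta = \nabla T_k(u)\mathds{1}_{\Omega_T^i} + hz\mathds{1}_{\Omega_T^j}$, send $i\to\infty$ then $h\to 0$ (using Lemma~\ref{lem:M*<M}, Lemma~\ref{lem:unif} and the Vitali theorem exactly as before), and finally select $z = -\frac{A(x,\nabla T_k(u))-\Ak}{|A(x,\nabla T_k(u))-\Ak|}$ to conclude $\Ak = A(x,\nabla T_k(u))$ a.e.\ on each $\Omega_T^j$, hence a.e.\ in $\OT$.

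The main obstacle I expect is the rigorous handling of the evolution term under the time regularization: one must show that the "parasitic" term coming from $\partial_t\big(T_k(u_n) - (T_k(u))_\mu\big)$ — which is not itself sign-definite before taking limits — has nonnegative $\liminf$ after the iterated limits $n\to\infty$, $\mu\to\infty$. This requires the monotonicity of the map $g\mapsto g_\mu$ in the appropriate sense and an identity of the type $\int_0^\tau \partial_t(w_\mu)\,(w - w_\mu)\,dt = \tfrac{\mu}{2}\int_0^\tau |w-w_\mu|^2\,dt + \tfrac12\|w_\mu(\tau)\|^2 - \ldots \ge 0$ applied with $w = T_k(u)$, combined with weak lower semicontinuity to pass from $u_n$ to $u$ inside the $L^2$-in-time norms; this is the delicate bookkeeping that the authors flag as "much more complicated than in the elliptic setting." Everything else — the right-hand side, the modular convergence of the regularizations, and the final monotonicity trick — is routine given the machinery already assembled in the earlier propositions and appendices.
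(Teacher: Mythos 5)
Your proposal follows essentially the same route as the paper: reduce everything to the limsup inequality $\limsup_n\iOT A(x,\nabla T_k(u_n))\cdot\nabla T_k(u_n)\,dx\,dt\le\iOT \Ak\cdot\nabla T_k(u)\,dx\,dt$, prove it by applying the renormalized formulation of Lemma~\ref{lem:intbyparts} with the Landes-type time regularization $(T_k(u)\mathds{1}_{(0,T)})_\mu$ (using the ODE~\eqref{gmu:ode} to give the evolution term a nonnegative limit, the radiation control to kill the $h'$-term, and modular convergence of $\nabla(T_k(u))_\mu$ plus $\Ak\in L_{M^*}=(E_M)^*$ for the $\mu$-limit), and then conclude with the localized Minty monotonicity argument copied from Proposition~\ref{prop:bound}. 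This is exactly the paper's proof strategy, so the plan is sound as stated.
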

\begin{proof}
 We shall show, still for fixed $k$, that  in~\eqref{conv:n:ATkutMs} \begin{equation}
\label{Ak=ATk}\Ak=A(x,\nabla T_k(u)).
\end{equation} 

Fix nonnegative ${w}\in  C_c^\infty([0,T)).$ We show now that \begin{equation}
\label{limsup<}
\limsup_{n\to\infty} \iOT {w} A(x,\nabla T_k(u_n))\cdot\nabla(T_k(u_n))\,dx\,dt\leq \iOT {w} \Ak \cdot\nabla(T_k(u))\,dx\,dt
\end{equation}
and then conclude~\eqref{Ak=ATk} via the monotonicity argument.

For any $\mu >0$ we apply regularization~\eqref{gmu} to \[g^k(t,x)=T_k(u(t,x))\mathds{1}_{(0,T)}(t)+\omega_0^k(x) \mathds{1}_{(-\infty,0]}(t).\]
We notice that $(g^k)_\mu \in\VTMi$ and due to~\eqref{gmu:ode}  we have
\begin{equation}
\label{mu:pde}
\begin{split}\partial_t (T_k(u)\mathds{1}_{(0,T)})_\mu+ \mu((T_k(u)\mathds{1}_{(0,T)})_\mu-T_k(u)) =0&\quad \text{ for all } t\in(0,T]\text{ and a.e. } x\in\Omega,\\
(T_k(u))_\mu(0,x) =(g^{k})_\mu(0,x)&\quad\text{ a.e. in }\Omega.\end{split}
\end{equation} Moreover, $(g^{k})_\mu \in W^{1,1}_0(\Omega)\cap L^\infty(\Omega)$, $\nabla (g^{k})_\mu\in L_M(\Omega)$, $\|(g^{k})_\mu\|_{L^\infty(\Omega)}\leq k$ for all $\mu>0,$ $\frac{1}{\mu}\|(g^{k})_\mu\|_{W^{1,1}_0(\Omega)}\to 0$ as $\mu\to \infty$, and $(g^{ k})_\mu\to T_k(u_0)$ a.e. in~$\Omega$  as $\mu\to \infty$. 

In turn, $(T_k(u)\mathds{1}_{(0,T)})_\mu$ is differentiable for a.e. $t\in(0,T)$ and $\partial_t (T_k(u)\mathds{1}_{(0,T)})_\mu\in \VTMi$. Moreover, $\nabla(T_k(u)\mathds{1}_{(0,T)})_\mu=(\nabla T_k(u)\mathds{1}_{(0,T)})_\mu$ for all $t\in(0,T]$ and for a.e. $x\in\Omega$, $\|(T_k(u)\mathds{1}_{(0,T)})_\mu\|_{L^\infty(\OT)}\leq k$ for all $\mu >0$, $(T_k(u)\mathds{1}_{(0,T)})_\mu\to T_k(u)$ a.e. in $\OT$ and weakly-$*$ in $L^\infty(\OT)$, 
\begin{equation}
\label{TkmuinEm}
\nabla (T_k(u)\mathds{1}_{(0,T)})_\mu\in E_M(\OT;\rn).
\end{equation} Due to the Jensen inequality and ~\eqref{conv:usae}, we infer that $(\nabla T_k(u)\mathds{1}_{(0,T)})_\mu\xrightarrow{M} \nabla T_k(u)$ converges modularly in $L_M(\OT;\rn)$. Then also
\begin{equation}
\label{conv:Tkumu}
(T_k(u)\mathds{1}_{(0,T)})_\mu\xrightarrow[\mu\to \infty]{} T_k (u)\qquad\text{strongly in}\quad W^{1,1}(\OT).
\end{equation}

 Recall $\psi_l$ given by~\eqref{psil}. We apply Lemma~\ref{lem:intbyparts} to~\eqref{eq:bound}, i.e. with $A=A(x,\nabla u_n)\in L_M(\OT;\rn)$, $F=T_n f\in L^1(\OT) $  twice: first time with $h(\cdot)=\psi_l(\cdot)T_k(\cdot)$ and $\xi={w}$ and second time with  $h(\cdot)=\psi_l(\cdot)$ and $\xi={w}(T_k(u)\mathds{1}_{(0,T)})_\mu.$ Subtracting the second from the first we get\begin{equation}
\label{1stsum:id:Ak}I_1^{n,\mu,l}+I_2^{n,\mu,l}+I_3^{n,\mu,l}=I_4^{n,\mu,l},
\end{equation}
where
\begin{eqnarray*}
I_1^{n, \mu,l}&=&-\iOT\partial_t{w}\int_{u_{0,n}}^{u_n}\psi_l(s)T_k(s)ds\,dx\,dt
+\iOT\partial_t({w} (T_k(u)\mathds{1}_{(0,T)})_\mu)\int_{u_{0,n}}^{u_n}\psi_l(s)ds\,dx\,dt,\\
I_2^{n,\mu,l}&=& \iOT{w} \psi_l(u_n)   A(x,\nabla  u_n)\cdot \nabla(T_k(u_n)-(T_k(u)\mathds{1}_{(0,T)})_\mu) \,dx\,dt,\\
I_3^{n,\mu,l}&=& \iOT{w} \psi_l'(u_n) (T_k(u_n)-(T_k(u)\mathds{1}_{(0,T)})_\mu) A(x,\nabla  u_n)\cdot \nabla u_n \,dx\,dt,\\
I_4^{n,\mu,l}&= &\iOT{w} T_nf \psi_l(u_n) (T_k(u_n)-(T_k(u)\mathds{1}_{(0,T)})_\mu)  \,dx\,dt.
\end{eqnarray*}
We are going to pass to the limit with $n\to\infty$, then $\mu\to\infty$ and finally with $l\to\infty$. Roughly speaking we show that the limit of $I_1^{n,\mu,l}$ is nonnegative, then let $I_3^{n,\mu,l}$ and $I_4^{n,\mu,l}$ to zero. In turn, we get that the limit of $I_2^{n,\mu,l}$ is nonpositive.

\medskip

\textbf{Limit of $I_1^{n,\mu,l}$. } We are going to prove that\begin{equation}\label{limI1>0}
\limsup_{l\to\infty}\limsup_{\mu\to\infty}\limsup_{n\to\infty} I_1^{n,\mu,l}\geq 0.
\end{equation}

Let us consider\begin{equation*}
 I_1^{n,\mu,l}=I_{1,1}^{n,\mu,l}+I_{1,2}^{n,\mu,l}+I_{1,3}^{n,\mu,l},
\end{equation*}
where 
\begin{equation*}
\begin{split}
I_{1,1}^{n,\mu,l}&=-\iOT\partial_t{w}\int_{u_{0,n}}^{u_n}\psi_l(s)T_k(s)ds\,dx\,dt,\\
I_{1,2}^{n,\mu,l}&=\iOT\partial_t w(T_k(u)\mathds{1}_{(0,T)})_\mu \left(\int_{u_{0,n}}^{u_n}\psi_l(s)ds \right)dx\,dt,\\
I_{1,3}^{n,\mu,l}&=\iOT  w\partial_t  (T_k(u)\mathds{1}_{(0,T)})_\mu \left(\int_{u_{0,n}}^{u_n}\psi_l(s)ds \right)dx\,dt. 
\end{split}
\end{equation*}

To deal with $\lim_{n\to\infty}I_{1,1}^{n,\mu,l}$, notice that 
\begin{equation*}
\begin{split} \int_0^{v}\psi_l(s) T_k(s)ds&=\int_0^v\int_0^{T_k(s)}\psi_l(s) \,d\sigma\,ds=\int_0^v\psi_l(s)\int_0^{T_k(v)}\,d\sigma\,ds
-\int_0^{T_k(v)}\int_0^\sigma \psi_l(s)\,ds\,d\sigma\\
&={T_k(v)}\int_0^v\psi_l(s)  \,ds-\int_0^{T_k(v)}\int_0^\sigma \psi_l(s)\,ds\,d\sigma .\end{split}
\end{equation*}

Therefore
\begin{equation*}
\begin{split}
I_{1,1}^{n,\mu,l}&= -\iOT\partial_t w\left(T_k(u_n)\int_{0}^{u_n}\psi_l(s)ds-
\int_0^{T_k(u_n)}\int_{0}^{\sigma}\psi_l(s)ds\,d\sigma\right)dx\,dt\\
& \quad - \iOT \partial_t  w \left(\int_{0}^{T_k(u_{0,n})}\int_0^\sigma\psi_l(s)ds\,d\sigma-
 {T_k(u_{0,n})}\int_{0}^{u_{0,n}}\psi_l(s)ds \right) dx\,dt\\
 &=  -\iOT\partial_t w\left(T_k(u_n)\int_{0}^{u_n}\psi_l(s)ds-
\int_0^{T_k(u_n)}\int_{0}^{\sigma}\psi_l(s)ds\,d\sigma\right)dx\,dt\\
& \quad + \iO w(0)\left(\int_{0}^{T_k(u_{0,n})}\int_0^\sigma\psi_l(s)ds\,d\sigma-
 {T_k(u_{0,n})}\int_{0}^{u_{0,n}}\psi_l(s)ds \right)dx\\
 &\xrightarrow[n\to\infty]{} -\iOT\partial_t w\left(T_k(u)\int_{0}^{u}\psi_l(s)ds-
\int_0^{T_k(u)}\int_{0}^{\sigma}\psi_l(s)ds\,d\sigma\right)dx\,dt\\
& \quad + \iO w(0)\left(\int_{0}^{T_k(u_{0})}\int_0^\sigma\psi_l(s)ds\,d\sigma-
 {T_k(u_{0})}\int_{0}^{u_{0}}\psi_l(s)ds \right)dx=I_{1,1}^{l},
\end{split}
\end{equation*}
where continuity of the integral and of truncation $T_k(\cdot)$ together with the Lebesgue Dominated Convergence Theorem justify passing to the limit with $n\to\infty$.

In the case of $I_{1,2}^{n,\mu,l}$, according to the pointwise convergence of the integrand when $n\to\infty$ and
\[|I_{1,2}^{n,\mu,l}|\leq \iOT 2 \|\partial_t w\|_{L^\infty}\cdot k\cdot (l+1)\,dx\,dt,\]
 the Lebesgue Dominated Convergence Theorem justifies passing to the limit with $n\to\infty$. When we additionally take into account~\eqref{conv:Tkumu} we pass with $\mu\to\infty$ to get
\[\lim_{\mu\to\infty}\lim_{n\to\infty}I_{1,2}^{n,\mu,l} =\iOT\partial_t w\,T_k(u ) \left(\int_{u_0}^{u}\psi_l(s)ds \right)dx\,dt=I_{1,2}^l.\]

As for $I_{1,3}^{n,\mu,l}$, recalling~\eqref{mu:pde}, we notice that
\[ I_{1,3}^{n,\mu,l}=\iOT  w\mu (T_k(u)-(T_k(u)\mathds{1}_{(0,T)})_\mu) \left(\int_{u_{0,n}}^{u_n}\psi_l(s)ds \right)dx\,dt, \] where we let $n\to\infty$ similarly to the case of $I_{1,2}^{n,\mu,l}$ and obtain
\[\lim_{n\to\infty}I_{1,3}^{n,\mu,l} =\iOT  w\mu (T_k(u )-(T_k(u )\mathds{1}_{(0,T)})_\mu) \left(\int_{u_{0}}^{u}\psi_l(s)ds \right)dx\,dt= I_{1,3,1}^{ \mu,l}+I_{1,3,2}^{ \mu,l}+I_{1,3,2}^{ \mu,l}+I_{1,3,4}^{ \mu,l}\]
with
\begin{equation*}
\begin{split}
I_{1,3,1}^{ \mu,l}&=-\iOT   w\partial_t  (T_k(u)\mathds{1}_{(0,T)})_\mu 
\int_0^{u_0} \psi_l(s)ds dx\,dt \\
&\xrightarrow[\mu\to\infty]{} \iOT \partial_t w{ T_k(u) }\int_0^{u_0} \psi_l(s)ds   \,dx\,dt
+\iO w (0)   T_k(u_0)  \int_0^{u_0} \psi_l(s)ds \, dx = I^l_{1,3,1},\\
I_{1,3,2}^{ \mu,l}&=\iOT  w\mu (T_k(u )-(T_k(u )\mathds{1}_{(0,T)})_\mu)  
\left(\int_0^{u} \psi_l(s)ds-\int_0^{T_k(u)} \psi_l(s)ds\right) dx\,dt=\\
&=\int_{\{|u|>k\}}  w\mu (k{\rm sign}(u)-(T_k(u )\mathds{1}_{(0,T)})_\mu)  
\left(\int_0^{u} \psi_l(s)ds-\int_0^{k{\rm sign}(u)} \psi_l(s)ds\right) dx\,dt,\\
I_{1,3,3}^{ \mu,l}&=\iOT  w\mu (T_k(u )-(T_k(u )\mathds{1}_{(0,T)})_\mu)  
\left(\int_0^{T_k(u)} \psi_l(s)ds-\int_0^{(T_k(u)\mathds{1}_{(0,T)})_\mu} \psi_l(s)ds\right) dx\,dt,\\
I_{1,3,4}^{ \mu,l}&=\iOT  w\partial_t( (T_k(u )\mathds{1}_{(0,T)})_\mu )  
\left(\int_0^{(T_k(u)\mathds{1}_{(0,T)})_\mu} \psi_l(s)ds \right) dx\,dt\\ 
&=-\iOT \partial_t w \int_0^{(T_k(u)\mathds{1}_{(0,T)})_\mu}\int_0^\sigma \psi_l(s)dsd\sigma \,dx\,dt 
-\iO w(0)  \int_0^{\omega_0^{\mu,k}}\int_0^\sigma \psi_l(s)dsd\sigma\, dx \\
&\xrightarrow[\mu\to\infty]{} -\iOT \partial_t w  \int_0^{ T_k(u) }\int_0^\sigma \psi_l(s)dsd\sigma  \,dx\,dt
-\iO w(0)  \int_0^{ T_k(u_0)  }\int_0^\sigma \psi_l(s)dsd\sigma\, dx =I^l_{1,3,4}.
\end{split}
\end{equation*}
The limits follows from the Lebesgue Dominated Convergence Theorem. Moreover, due to the fact that $|(T_k(u)\mathds{1}_{(0,T)})_\mu|\leq k$ a.e. in $\OT$ and the monotonicity of $\sigma\mapsto\int_0^{\sigma} \psi_l(s)ds$ a.e. in $\Omega$, we notice that $I_{1,3,2}^{ \mu,l}, I_{1,3,3}^{ \mu,l}\geq 0$. Therefore,
\begin{equation*} \limsup_{\mu\to\infty}\limsup_{n\to\infty}I_1^{n,\mu,l}\geq I_{1,1}^{l}+I_{1,2}^{l}+I_{1,3,1}^l+I_{1,3,4}^l,
\end{equation*}
consequently
\begin{equation*}\begin{split}
\limsup_{\mu\to\infty}\limsup_{n\to\infty}I_1^{n,\mu,l}&\geq -\iOT\partial_t w\left(T_k(u)\int_{0}^{u}\psi_l(s)ds-
\int_0^{T_k(u)}\int_{0}^{\sigma}\psi_l(s)ds\,d\sigma\right)dx\,dt\\
  &+ \iO w(0)\left(\int_{0}^{T_k(u_{0})}\int_0^\sigma\psi_l(s)ds\,d\sigma-
 {T_k(u_{0})}\int_{0}^{u_{0}}\psi_l(s)ds \right)dx\\
 &+\iOT\partial_t w\,T_k(u ) \left(\int_{u_0}^{u}\psi_l(s)ds \right)dx\,dt\\
 &+\iOT \partial_t w{ T_k(u) }\int_0^{u_0} \psi_l(s)dsd\sigma  \,dx\,dt
+\iO w (0)   T_k(u_0)  \int_0^{u_0} \psi_l(s)dsd\sigma\, dx \\
 &-\iOT \partial_t w  \int_0^{ T_k(u) }\int_0^\sigma \psi_l(s)dsd\sigma  \,dx\,dt
-\iO w(0)  \int_0^{ T_k(u_0)  }\int_0^\sigma \psi_l(s)dsd\sigma\, dx =0,\end{split}
\end{equation*}
which implies~\eqref{limI1>0}.

\medskip
 
\textbf{Limit of $I_3^{n,\mu,l}$.} Since (A2) forces nonnegativeness of $A(x,\nabla  u_n)\cdot \nabla u_n$,  the radiation control~\eqref{eq:contr:rad:n} is equivalent to
\[
 \lim_{l\to\infty}\limsup_n\int_{\{l<|u_n|<l+1\}} |A(x,\nabla u_n)\nabla u_n|\,dx\,dt=0.
 \]

Then 
\[\begin{split}
|I_3^{n,\mu,l}|&=\left| \iOT{w} \psi_l'(u_n) (T_k(u_n)-(T_k(u)\mathds{1}_{(0,T)})_\mu) A(x,\nabla  u_n)\cdot \nabla u_n \,dx\,dt\right|\leq\\
&\leq 2 k||{w}||_{L^\infty(\R)} \int_{\{l<|u_n|<l+1\}} \left| A(x,\nabla  u_n)\cdot \nabla u_n \right|\,dx\,dt,
\end{split}\]
which is independent of $\mu$, so it implies\begin{equation}
\label{lim:I3:idAk}
\lim_{l\to\infty}\lim_{\mu\to\infty}\limsup_{n\to\infty} I_3^{n,\mu,l}=0.
\end{equation}

\medskip

\textbf{Limit of $I_4^{n,\mu,l}$.} To deal with the limit with $n\to\infty$ we apply the Lebesgue Dominated Convergence Theorem due to the continuity of the integrand and~\eqref{conv:usae}, i.e.  $u_n\to u$ a.e. in $\OT$. Moreover, we know that  $(T_k(u)\mathds{1}_{(0,T)})_\mu\to T_k(u)$ a.e. in $\OT$ when $\mu\to\infty$. Boundedness in $L^1$ of the rest terms enables to apply the Lebesgue Dominated Convergence Theorem and pass to the limit with $\mu\to\infty$.  to get\begin{equation}
\label{lim:I4:idAk}
\lim_{l\to\infty} \lim_{\mu\to\infty} \lim_{n\to\infty} I_4^{n,\mu,l}=0.
\end{equation} 

\medskip

\textbf{Identification of the limit $A(x,\nabla(T_k(u_n)))$. Conclusion via the monotonicity trick. }  Recall~\eqref{1stsum:id:Ak}. Passing there to the limit, due to~\eqref{lim:I4:idAk} and \eqref{lim:I3:idAk},  we get 
\[\begin{split}0=&\limsup_{l\to\infty}\limsup_{\mu\to\infty}\limsup_{n\to\infty} I_1^{n,\mu,l}\\+&\limsup_{l\to\infty}\limsup_{\mu\to\infty}\limsup_{n\to\infty} \iOT{w} \psi_l(u_n)   A(x,\nabla T_k( u_n))\cdot \nabla(T_k(u_n)-(T_k(u)\mathds{1}_{(0,T)})_\mu) \,dx\,dt+0.\end{split}\]
When we take into account~\eqref{limI1>0}, then the above line becomes\begin{equation*}
\limsup_{l\to\infty}\limsup_{\mu\to\infty}\limsup_{n\to \infty}\iOT {w}\psi_l(u_n) A(x,\nabla T_k(u_n))\cdot \nabla(T_k(u_n)-(T_k(u)\mathds{1}_{(0,T)})_\mu)\,dx\,dt\leq 0.
\end{equation*}
Note that due to (A2) we have $A(x,\nabla T_k(u_n))\cdot \nabla(T_k(u_n)))\geq 0$ and $A(x,0)=0$. Therefore, for sufficiently large $l,\mu,n$, since $w,\psi_l\geq 0$, and~\eqref{psil}, we have
\[\iOT w A(x,\nabla T_k(u_n))\cdot \nabla(T_k(u_n))\,dx\,dt\leq 
\iOT w A(x,\nabla T_k(u_n))\cdot \nabla(T_k(u)\mathds{1}_{(0,T)})_\mu\,dx\,dt.\]
On the right-hand side above we use~\eqref{TkmuinEm} and~\eqref{conv:n:ATkutMs}, and then for sufficiently large $\mu$
\[\limsup_{n\to\infty}\iOT w A(x,\nabla T_k(u_n))\cdot \nabla(T_k(u_n))\,dx\,dt\leq
\iOT w \Ak\cdot \nabla(T_k(u)\mathds{1}_{(0,T)})_\mu\,dx\,dt.\]
Recall that $\nabla (T_k (u))_\mu\xrightarrow{M}\nabla T_k (u)$. Therefore by Definition~\ref{def:convmod} ii), the sequence $\{M(x,\nabla(T_k (u)\mathds{1}_{(0,T)})_\mu/\lambda)\}_\mu$ is uniformly bounded in $L^1(\OT;\rn)$ for some $\lambda$ and consequently, by~Lemma~\ref{lem:unif}  $\{\nabla(T_k (u)\mathds{1}_{(0,T)})_\mu\}_\mu$ is uniformly integrable. Hence the Vitali Convergence Theorem (Theorem~\ref{theo:VitConv}) gives 
\[\lim_{\mu\to 0}\int_\OT w\Ak \cdot \nabla (T_k(u)\mathds{1}_{(0,T)})_\mu \,dx=\int_\OT w \Ak \cdot \nabla T_k(u) \,dx.\]
Consequently, we obtain~\eqref{limsup<}. Following the monotonicity argument, as in the proof of Proposition~\ref{prop:bound}, we prove that  \begin{equation*}
A(x,\nabla (T_k( u )))={\Ak} \qquad \text{a.e.}\quad\text{in}\quad \Omega_T.
\end{equation*}
 Monotonicity assumption (A3) of~$A$ implies
\[(A(x,\eta)-A(x,\nabla (T_k(u_n)))\cdot(\eta-\nabla(T_k( u_n)))\geq 0\qquad\text{a.e.  in }\OT,\]
for any $\eta\in L^\infty(\OT;\rn)\subset E_M(\OT;\rn)$. Since $A(x,\eta)\in L_{M^*}(\OT,\rn)= (E_{M}(\OT,\rn))^*$, we pass to the limit with $n\to\infty$ and take into account~\eqref{limsup<} to conclude that
\begin{equation}
\label{mono:int2}
\iOT w (A(x,\eta)-\Ak)\cdot(\eta-\nabla (T_k(u)))\,dx\,dt\geq 0.
\end{equation}

Let us define \begin{equation}
\label{omm2}
\Omega_T^{k,K}=\{(t,x)\in\OT:\ |\nabla T_k( u)|\leq K\quad\text{a.e. in }\OT\}.
\end{equation} 

We are going to show that
\begin{equation}
\label{lim:id:omj2}
A(x,\nabla (T_k(u)))=\Ak\qquad \text{a.e.}\quad\text{in}\quad \Omega_T^{k,j}
\end{equation}
for arbitrary $j$. This implies the equality a.e. in $\Omega_T$, i.e.~\eqref{Ak=ATk}.

We fix arbitrary $0<j<i$ and $z\in L^\infty(\OT;\rn)$.  Consider a parameter $h\in(0,1)$ and choose 
\[\eta=\nabla u\mathds{1}_{\Omega_T^{k,i}}+
hz\mathds{1}_{\Omega_T^{k,j}}.\]
Then in~\eqref{mono:int2} we have
\begin{equation*}
 - \int_{\OT\setminus\Omega_T^{k,i}}(A(x,0)-\Ak) \nabla u \,dx\,dt +h\int_{ \Omega_T^{k,j}} (A(x, \nabla (T_k(u))+h z )-\Ak)z  \, dx\,dt\geq  0.
\end{equation*} 
which becomes
\begin{equation}
\label{po-mon2} 
 \int_{\OT\setminus\Omega_T^{k,i}} \Ak \cdot \nabla u \,dx\,dt +h \int_{ \Omega_T^{k,j}} (A(x,\nabla  (T_k(u))+h z )-\Ak)z  \, dx\,dt\geq  0,
\end{equation} 
because (A2) implies $A(x,0)=0$. The Fenchel-Young inequality applied to $|\Ak \cdot \nabla (T_k( u))|$ above ensures (via the Lebesgue Dominated Convergence Theorem) that the first integral on the left-hand side vanishes when $i\to \infty$. Therefore, after passing with $i\to \infty$ in~\eqref{po-mon2}, we obtain \begin{equation*} 
\int_{ \Omega_T^{k,j}} (A(x,\nabla  (T_k(u))+h z )-\Ak)\cdot z \,  dx\, dt\geq  0\qquad\forall_{h \in(0,1)}
\end{equation*}
where $z $ and $j$ are fixed. We are going to pass to the limit with $h \to 0$. 

Note that
\[A(x,\nabla  (T_k(u))+h z )\xrightarrow[h\to 0]{}A(x,\nabla  (T_k(u)))\quad\text{a.e. in}\quad \Omega_T^{k,j} \] 
and $\{A(x,\nabla  (T_k(u))+h z )\}_{h }$ is pointwise bounded on $\Omega_T^{k,j}$ (cf.~\cite[(9)]{pgisazg1}).  Lemma~\ref{lem:M*<M}  yields
\[  M^*\left(x,A(x,\nabla  (T_k(u))+h z )\right) \leq \frac{2}{c_A}  M\left(x,\frac{2}{c_A}(\nabla  (T_k(u))+h z )\right).\]
Since $\{ M(x,\frac{2}{c_A}(\nabla  (T_k(u))+h z ))\}_{h }$ is uniformly bounded in~$L^1(\Omega_T^{k,j})$ (cf.~\eqref{LinfinEM} and~\eqref{omm2}), also $\{M^*\left(x,A(x,\nabla  (T_k(u))+h z )\right)\}_{h }$ is uniformly bounded in~$L^1(\Omega^{k,j}_T)$. Hence, Lemma~\ref{lem:unif} gives uniform integrability of $\{A(x,\nabla  (T_k(u))+h z )\}_{h }$ on $\Omega_T^{k,j}$. When we notice that $|\Omega_T^{k,j}|<\infty$, we can apply the Vitali Convergence Theorem (Theorem~\ref{theo:VitConv}) to~get
\[A(x,\nabla  (T_k(u))+h z )\xrightarrow[h\to 0]{}A(x,\nabla  (T_k(u)))\quad\text{in}\quad L^1(\Omega_T^{k,j};\rn).\] 
Thus
\begin{equation*} \int_{ \Omega_T^{k,j}} (A(x,\nabla  (T_k(u))+h z )-\Ak)\,z  w\, dx\, dt\xrightarrow[h\to 0]{} \int_{ \Omega_T^{k,j}} (A(x, (T_k(u)))-\Ak)\,z  w \, dx\,dt.
\end{equation*}
Consequently,
\begin{equation*}  \int_{ \Omega_T^{k,j}} (A(x,\nabla  (T_k(u)))-\Ak)\,z w\,  dx\, dt\geq 0,
\end{equation*}
for any $z \in L^\infty(\OT;\rn)$. Let us take 
\[z =\left\{\begin{array}{ll}-\frac{A(x,\nabla  (T_k(u)))-\Ak}{|A(x,\nabla  (T_k(u)))-\Ak|}&\ \text{if}\quad A(x,\nabla  (T_k(u)))-\Ak\neq 0,\\
0&\ \text{if}\quad A(x,\nabla  (T_k(u)))-\Ak= 0.
\end{array}\right.\]
We obtain 
\begin{equation*}  \int_{ \Omega_T^{k,j}}w |A(x,\nabla  (T_k(u)))-\Ak| dx\,dt\leq 0,
\end{equation*}
hence~\eqref{lim:id:omj2} holds. Taking into account that nonnegative $w\in C_c^\infty([0,T))$ is arbitrary, we get~\eqref{Ak=ATk}, which completes the proof.\end{proof}

\section{The proof of existence of renormalized solutions}\label{sec:mainproof} 
\begin{proof}[Proof of Theorem~\ref{theo:main}] Obviously, when $u_n$ solves~\eqref{eq:bound} its limit $u$ satisfies condition (R1), due to Proposition~\ref{prop:convTk} and Proposition~\ref{prop:convTkII}. The remaining (R2)-(R3) require more arguments.

\textbf{Condition (R3). Weak convergence of $A(x,\nabla T_k(u_n))\cdot \nabla T_k(u_n)$.} The aim now is to prove the key convergence for condition~(R3), namely\begin{equation}
\label{adtkn}A(x,\nabla T_k(u_n))\cdot \nabla T_k(u_n)\xrightharpoonup{} A(x,\nabla T_k(u))\cdot \nabla T_k(u) \quad \text{weakly in }L^1(\OT). 
\end{equation}
The reasoning involves the Chacon Biting Lemma and the Young measure approach. First we observe that  the sequence $\{[A(x,\nabla T_k(u_n))-A(x,\nabla T_k(u ))]\cdot [ \nabla T_k(u_n)- \nabla T_k(u_n)]\}_n$ is uniformly bounded in $L^1(\OT)$. Indeed,
\[\begin{split}&\iOT[A(x,\nabla T_k(u_n))-A(x,\nabla T_k(u ))]\cdot [ \nabla T_k(u_n)- \nabla T_k(u_n)]\,dx\,dt\leq\\ 
&\qquad\leq \iOT A(x,\nabla T_k(u_n)) \nabla T_{k}(u_n) dx\,dt
+\int_\OT A(x,\nabla T_k(u_n)) \nabla T_{k}(u )  dx\,dt+\\
&\qquad+\int_\OT A(x,\nabla T_{k}(u )) \nabla T_{k}(u_n) dx\,dt+\int_\OT A(x,\nabla T_{k}(u )) \nabla T_{k}(u ) dx\,dt=II_1+II_2+II_3+II_4, 
\end{split}\]
where $II_1$ is uniformly bounded due to~\eqref{apriori:thetai},  in the case of $II_2$ and $II_3$ the Fenchel-Young inequality and~\eqref{apriori:thetai} gives boundedness, while $II_4$ is independent of $n$.  

The monotonicity of $A(x,\cdot)$, uniform boundedness of $\{[A(x,\nabla T_k(u_n))-A(x,\nabla T_k(u ))]\cdot [ \nabla T_k(u_n)- \nabla T_k(u_n)]\}_n$ in $L^1(\OT)$, and Theorem~\ref{theo:Youngmeas} combined with Theorem~\ref{theo:bitinglemma1} give, up to~a~subsequence, convergence
\begin{equation}
\label{110}
\begin{split}0&\leq {w} [A(x,\nabla T_k(u_n) )-A(x,\nabla T_k(u ))]\cdot [\nabla T_{k}(u_n)-\nabla T_{k}(u )]\\
&\xrightarrow{b} w \int_\rnt [A(x,\lambda)-A(x,\nabla T_{k}(u ))]\cdot [\lambda-\nabla T_{k}(u )]d\nu_{t,x}(\lambda),\end{split}
\end{equation}
where $\nu_{t,x}$ denotes the Young measure generated by the sequence $\{\nabla T_{k}(u_n)\}_n$.

Since $\nabla T_{k}(u_n)\xrightharpoonup{}\nabla T_{k}(u )$ in $L^1(\OT)$, we have $\int_\rnt\lambda \, d\nu_{t,x}(\lambda)=\nabla T_{k}(u)$ for a.e. $t\in(0,T)$ and a.e. $x\in\Omega$. Then
\[\int_\rnt  A(x,\nabla T_k(u) ) \cdot [\lambda-\nabla T_{k}(u )]d\nu_{t,x}(\lambda)=0\]
and the limit in~\eqref{110} is equal for a.e. $t\in(0,T)$ and a.e. $x\in\Omega$ to
\begin{equation}\begin{split}
\label{111} {w} \int_\rnt [A(x,\lambda)-A(x,\nabla T_{k}(u ))]\cdot [\lambda-\nabla T_{k}(u )]d\nu_{t,x}(\lambda)=\\
{w}\int_\rnt  A(x,\lambda) \cdot  \lambda\, d\nu_{t,x}(\lambda)-{w}\int_\rnt  A(x,\lambda) \cdot \nabla  T_{k}(u )d\nu_{t,x}(\lambda).\end{split}
\end{equation}

Uniform boundedness of the sequence $\{ A(x,\nabla T_k(u_n))\cdot \nabla T_{k}(u_n) \}_n$ in $L^1(\OT)$  enables us  to apply once again Theorem~\ref{theo:Youngmeas} combined with Theorem~\ref{theo:bitinglemma1} to obtain
\begin{equation*}
 A(x,\nabla T_k(u_n))\cdot \nabla T_{k}(u_n)\xrightarrow{b} \int_\rnt  A(x,\lambda) \cdot  \lambda\,d\nu_{t,x}(\lambda).
\end{equation*}
Moreover, assumption (A2) implies $A(x,\nabla T_k(u_n))\cdot \nabla T_{k}(u_n)\geq 0$. Therefore, due to~\eqref{111} and~\eqref{110}, we have
\[\limsup_{n\to\infty} A(x,\nabla T_{k}(u_n) ) \nabla T_{k}(u_n)\geq \int_\rnt  A(x,\lambda) \cdot  \lambda\,d\nu_{t,x}(\lambda).\]
Taking into account that in~\eqref{conv:n:AT:id} we can put ${\cal A}_k=A(x,\nabla T_{k}(u) )=\int_\rnt  A(x,\lambda) \,d\nu_{t,x}(\lambda)$, the above expression implies\[\nabla T_{k}(u) \int_\rnt  A(x,\lambda) \,d\nu_{t,x}(\lambda)\geq \int_\rnt  A(x,\lambda) \cdot  \lambda\,d\nu_{t,x}(\lambda).\]

When we apply it, together with~\eqref{111}, the limit in~\eqref{110} is non-positive. Hence,
\begin{equation*} [A(x,\nabla T_{k}(u_n) )-A(x,\nabla T_{k}(u ))]\cdot [\nabla T_{k}(u_n)-\nabla T_{k}(u )]\xrightarrow{b} 0.
\end{equation*}
Observe further that $A(x,\nabla T_{k}(u ))\in L_{M^*}(\Omega;\rn)$ and we can choose ascending family of sets $E^{k}_j$, such that $| E^{k}_j|\to 0$ for $j\to \infty$ and $A(x,\nabla T_{k}(u ))\in L^\infty(\Omega\setminus E^{k}_j).$ Then, since $\nabla T_{k}(u_n)\xrightharpoonup{}\nabla T_{k}(u )$, we get\begin{equation*} A(x,\nabla T_{k}(u )) \cdot [\nabla T_{k}(u_n)-\nabla T_{k}(u )]\xrightarrow{b}  0
\end{equation*}
and similarly we conclude
\begin{equation*} A(x,\nabla T_{k}(u_n ))\cdot\nabla T_{k}(u )\xrightarrow{b}   A(x,\nabla T_{k}(u))\cdot\nabla T_{k}(u ).
\end{equation*}
Summing it up we get
\begin{equation*}
A(x,\nabla T_{k}(u_n ))\cdot\nabla T_{k}(u_n )\xrightarrow{b}  A(x,\nabla T_{k}(u))\cdot\nabla T_{k}(u ).
\end{equation*}
Let us point out that (A2) ensures that both --- the right and the left--hand sides are nonnegative. Recall that Theorem~\ref{theo:bitinglemma} together with~\eqref{limsup<} and~\eqref{conv:n:ATkutMs} results in~\eqref{adtkn}.

\medskip

\textbf{Condition (R3). Conclusion. }   Note that $\nabla u_n=0$ a.e. in $\{|u_n|\in\{l,l+1\}\}$. Then~\eqref{eq:contr:rad:n} implies
\begin{equation*}
 \lim_{l\to\infty}\sup_{n>0}\int_{\{l-1<|u_n|<l+2\}}A(x,\nabla u_n)\cdot\nabla u_n\,dx=0.
\end{equation*}
For $g^l:\r\to\r$ defined by
\[g^l(s)=\left\{\begin{array}{ll}1&\text{if }\ l\leq| s|\leq l+1,\\
0&\text{if }\ |s|<l-1\text{ or } |s|> l+2,\\
\text{is affine} &\text{otherwise},
\end{array}\right.\]
we have
\begin{equation}
\label{128}
\int_{\{l<|u|<l+1\}}A(x,\nabla u)\cdot\nabla u\,dx\,dt\leq \int_{\OT}g^l(u)A(x,\nabla T_{l+2}( u))\cdot\nabla  T_{l+2}( u)\,dx\,dt.
\end{equation}
Let us remind that we know that $u_n\to u$ a.e. in $\OT$ (cf.~\eqref{conv:usae}) and $\lim_{l\to\infty}|\{x:|u_n|>l\}|=0$ (cf.~\eqref{conv:umeas}). Moreover, we have weak convergence~\eqref{adtkn}, $A(x,\nabla T_{l+2}( u_n))\cdot\nabla  T_{l+2}( u_n)\geq 0$ and function $g^l$ is continuous and bounded. Thus, we infer that we can estimate the limit of the right-hand side of~\eqref{128} in the following way
\[\begin{split}
0&\leq \lim_{l\to\infty} \int_{\{l-1<|u|<l+2\}}A(x,\nabla u)\cdot\nabla u\,dx\,dt\leq \lim_{l\to\infty}\int_{\Omega}g^l(u)A(x,\nabla T_{l+2}( u))\cdot\nabla  T_{l+2}( u)\,dx\,dt=\\
&= \lim_{l\to\infty} \lim_{n\to\infty} \int_{\Omega}g^l(u_n)A(x,\nabla T_{l+2}(u_n))\cdot\nabla T_{l+2}(u_n)\,dx\,dt\leq\\
&\leq \lim_{l\to\infty}  \lim_{n\to\infty}\int_{\{l-1<|u_n|<l+2\}}A(x,\nabla T_{l+2}(u_n))\cdot\nabla T_{l+2}(u_n)\,dx\,dt=0,
\end{split}\]
where the last equality comes from~\eqref{eq:contr:rad:n}. Hence, our solution $u$ satisfies condition (R3).

\bigskip

\textbf{Condition (R2). }  For the proof of (R2) we need to apply Lemma~\ref{lem:intbyparts} for~\eqref{eq:bound}, arbitrary $h\in C_c^1(\R)$ and $\xi\in{C_c^\infty}([0,T)\times\Omega)$. Then
\begin{equation}
\label{fin-weak-n}
-\int_{\OT} \left(\int_{u_{0,n}}^{u_n} h(\s)d\s\right) \partial_t \xi \ \,dx\,dt+\int_{\OT}A(x,\nabla u_n)\cdot \nabla (h(u_n)\xi) \,dx\,dt=\int_{\OT}T_n(f) h(u_n)\xi \,dx\,dt.
\end{equation}
To pass to the limit with $n\to\infty$  side above we fix $R>0$ such that $\supp\, h\subset [-R,R]$. The right-hand converges to the desired limit due to the Lebesgue Dominated Convergence Theorem since $T_n f\to f$ in $L^1(\OT)$ and $\{h(u_n)\}_n$ is uniformly bounded.

To pass to the limit on the left-hand side  we notice that  we have there
\[\lim_{n\to\infty}-\int_{\OT} \left(\int_{u_{0,n}}^{u_n} h(\s)d\s\right) \partial_t \xi \ \,dx\,dt=-\int_{\OT} \left(\int_{u_{0 }}^{u } h(\s)d\s\right) \partial_t \xi \ \,dx\,dt, \]
where the equality is justified by the continuity of the integral. As for the second expression, we can write 
\[\begin{split}&\int_{\OT}A(x,\nabla u_n)\cdot \nabla (h(u_n)\xi) \,dx\,dt 
= \int_{\OT}h'(T_R(u_n)) A(x,\nabla T_R(u_n))  \nabla T_R(u_n)\,\xi \,dx\,dt\\&\qquad+\int_{\OT}h(T_R(u_n))A(x,\nabla T_R(u_n))\cdot \nabla  \xi  \,dx\,dt=III^n_1+III^n_2.\end{split}\]
 Recall weak convergence of $A(x,\nabla T_k(u_n))\cdot \nabla T_k(u_n)$ in $L^1(\OT)$~\eqref{adtkn}. Since $h'(u_n)\xi\to h'(u)\xi$ a.e. in~$\OT$ and \[\|h'(u_n)\xi\|_{L^\infty(\OT)}\leq \|h'(u_n) \|_{L^\infty(\OT)}\| \xi\|_{L^\infty(\OT)},\]
we pass to the limit with $n\to\infty$ in $III^n_1$. Whereas to complete the case of $III^n_2$ we observe that Proposition~\ref{prop:convTkII}  implies weak convergence of $A(x,\nabla T_R(u_n))$ in $L^1(\OT)$ as $n\to\infty$. Moreover, $\{h(T_R(u_n))\}_n$ converges a.e. in~$\OT$ to $h(T_R(u))$ and is uniformly bounded in $L^\infty(\OT)$, so we can pass to the limit. Altogether  we have
\[\lim_{n\to\infty}(III^n_1+III^n_2)=\int_{\OT}h'(T_R(u )) A(x,\nabla u )  \nabla T_R(u )\,\xi \,dx\,dt +\int_{\OT}h(T_R(u ))A(x,\nabla T_R(u ))\cdot \nabla  \xi  \,dx\,dt.\]
Therefore, all the expressions of~\eqref{fin-weak-n} converge to the limits as expected in (R2).

We already proved that $u$ satisfies (R1), (R2), and (R3), hence it is a~renormalized solution. Uniqueness is a direct consequence of the comparison principle (Proposition~\ref{prop:comp-princ}).\end{proof}

\appendix
\addcontentsline{toc}{section}{Appendices}
\section*{Appendices}

\section{Basics}\label{ssec:Basics}

\begin{defi}[$N$-function]\label{def:Nf} Suppose $\Omega\subset\rn$ is an open bounded set. A~function   $M:\Omega\times\rn\to\r$ is called an $N$-function if it satisfies the
following conditions:
\begin{enumerate}
\item $ M$ is a Carath\'eodory function (i.e. measurable with respect to $x$ and continuous with respect to the last variable), such that $M(x,0) = 0$, $\inf_{x\in\overline{\Omega}}M(x,\xi)>0$ for $\xi\neq 0$, and $M(x,\xi) = M(x, -\xi)$ a.e. in $\Omega$,
\item $M(x,\xi)$ is a convex function with respect to $\xi$,
\item $\lim_{|\xi|\to 0}\mathrm{ess\,sup}_{x\in\overline{\Omega}}\frac{M(x,\xi)}{|\xi|}=0$,
\item $\lim_{|\xi|\to \infty}\mathrm{ess\,inf}_{x\in\overline{\Omega}}\frac{M(x,\xi)}{|\xi|}=\infty$.
\end{enumerate}
\end{defi}

\begin{defi}[Complementary function] \label{def:conj} 
The complementary~function $M^*$ to a function  $M:\Omega\times\rn\to\r$ is defined by
\[M^*(x,\eta)=\sup_{\xi\in\rn}(\xi\cdot\eta-M(x,\xi)),\qquad \eta\in\rn,\ x\in\Omega.\]
\end{defi}

\begin{lem} If $M$ is an $N$-function and $M^*$ its complementary, we have 
 the Fenchel-Young inequality \begin{equation}
\label{inq:F-Y}|\xi\cdot\eta|\leq M(x,\xi)+M^*(x,\eta)\qquad \mathrm{for\ all\ }\xi,\eta\in\rn\mathrm{\ and\ }x\in\Omega.
\end{equation}
\end{lem}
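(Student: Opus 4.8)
The plan is to derive the inequality directly from the definition of the complementary function $M^*$ (Definition~\ref{def:conj}), using only the supremum characterization and the evenness of $M$ guaranteed by point~1 of Definition~\ref{def:Nf}. No convexity, no regularity, and no finiteness of $M^*$ is actually needed: if $M^*(x,\eta)=+\infty$ the claimed bound is vacuous, so it suffices to argue under the understanding that both sides may a priori take the value $+\infty$.

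First I would fix $x\in\Omega$ and $\xi,\eta\in\rn$, and simply specialize the supremum defining $M^*$. Since
\[M^*(x,\eta)=\sup_{\zeta\in\rn}\bigl(\zeta\cdot\eta-M(x,\zeta)\bigr)\geq \xi\cdot\eta-M(x,\xi),\]
rearranging yields $\xi\cdot\eta\leq M(x,\xi)+M^*(x,\eta)$. This is the ``one-sided'' Fenchel--Young inequality and already uses nothing beyond the definition.

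To upgrade $\xi\cdot\eta$ to $|\xi\cdot\eta|$, I would run the same argument with $\xi$ replaced by $-\xi$: the specialization $\zeta=-\xi$ gives $(-\xi)\cdot\eta\leq M(x,-\xi)+M^*(x,\eta)$, and by the symmetry $M(x,-\xi)=M(x,\xi)$ (point~1 of Definition~\ref{def:Nf}) this reads $-\xi\cdot\eta\leq M(x,\xi)+M^*(x,\eta)$. Combining the two estimates, $\max\{\xi\cdot\eta,\,-\xi\cdot\eta\}=|\xi\cdot\eta|\leq M(x,\xi)+M^*(x,\eta)$, which is the asserted inequality, valid for all $\xi,\eta\in\rn$ and all $x\in\Omega$.

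There is essentially no obstacle here; the only point meriting a word of care is the possibility that $M^*(x,\eta)=+\infty$ for some $\eta$, in which case the inequality holds trivially, so one never needs to know that $M^*$ is itself an $N$-function to state~\eqref{inq:F-Y}. (That $M^*$ is finite and an $N$-function is a separate fact used elsewhere, but it is not required for this lemma.)
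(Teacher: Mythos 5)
Your argument is correct and complete: the one-sided inequality is immediate from specializing the supremum in Definition~\ref{def:conj} at $\zeta=\xi$, and the upgrade to $|\xi\cdot\eta|$ via $\zeta=-\xi$ together with the evenness $M(x,-\xi)=M(x,\xi)$ from Definition~\ref{def:Nf} is exactly the standard route; the paper states this lemma without proof, and your derivation is the natural one it implicitly relies on.
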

\begin{lem}\label{lem:M*<M} Suppose $M$ and $A$ are such that (A2) is satisfied, then 
\begin{equation*}
  M^*(x,A(x,\eta)) \leq \frac{2}{c_A}  M\left(x, \frac{2}{c_A}\eta \right) \quad\text{for}\quad \eta\in L^\infty(\Omega;\rn).\end{equation*} 
\end{lem}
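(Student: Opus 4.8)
The plan is to derive this estimate directly from the coercivity condition (A2) together with the Fenchel--Young inequality and the definition of the conjugate function. The key observation is that (A2) already gives us a lower bound on $A(x,\eta)\cdot\eta$ in terms of $M^*(x,A(x,\eta))$, while Fenchel--Young gives an upper bound in terms of $M$ evaluated at a rescaled argument; combining the two isolates $M^*(x,A(x,\eta))$.

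First I would write down (A2): for every $\eta\in\rn$ and a.e.\ $x\in\Omega$,
\[
A(x,\eta)\cdot\eta\;\geq\; c_A\left(M(x,\eta)+M^*(x,A(x,\eta))\right)\;\geq\; c_A\,M^*(x,A(x,\eta)),
\]
using that $M\geq 0$. On the other hand, by the Fenchel--Young inequality~\eqref{inq:F-Y} applied with the pair $\left(\tfrac{2}{c_A}\eta,\;\tfrac{c_A}{2}A(x,\eta)\right)$,
\[
A(x,\eta)\cdot\eta \;=\; \Big(\tfrac{2}{c_A}\eta\Big)\cdot\Big(\tfrac{c_A}{2}A(x,\eta)\Big)
\;\leq\; M\Big(x,\tfrac{2}{c_A}\eta\Big)+M^*\Big(x,\tfrac{c_A}{2}A(x,\eta)\Big).
\]
Since $c_A\in(0,1]$, we have $\tfrac{c_A}{2}\leq\tfrac12$, and by convexity of $M^*(x,\cdot)$ together with $M^*(x,0)=0$ one gets $M^*\big(x,\tfrac{c_A}{2}\eta'\big)\leq \tfrac{c_A}{2}M^*\big(x,\tfrac12\cdot 2\eta'/1\big)$... more cleanly: $M^*(x,s\zeta)\leq s\,M^*(x,\zeta)$ for $s\in[0,1]$, so $M^*\big(x,\tfrac{c_A}{2}A(x,\eta)\big)\leq \tfrac{c_A}{2}M^*\big(x,A(x,\eta)\big)$ provided $\tfrac{c_A}{2}\le 1$, which holds. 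Hence
\[
c_A\,M^*(x,A(x,\eta)) \;\leq\; M\Big(x,\tfrac{2}{c_A}\eta\Big)+\tfrac{c_A}{2}\,M^*(x,A(x,\eta)),
\]
and absorbing the last term on the left yields $\tfrac{c_A}{2}M^*(x,A(x,\eta))\leq M\big(x,\tfrac{2}{c_A}\eta\big)$, i.e.
\[
M^*(x,A(x,\eta))\;\leq\;\frac{2}{c_A}\,M\Big(x,\frac{2}{c_A}\eta\Big),
\]
which is exactly the claim.

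There is one point requiring mild care, which I expect to be the only real obstacle: all of this is pointwise in $x$, and one must ensure the quantities involved are finite and the manipulations legitimate. Taking $\eta\in L^\infty(\Omega;\rn)$ guarantees $|\eta|\le\|\eta\|_{L^\infty}$ a.e., so $M\big(x,\tfrac{2}{c_A}\eta(x)\big)$ is finite a.e.\ (indeed $M(x,z)<\infty$ for every fixed $z$ by the Carath\'eodory property and, via~\eqref{ass:M:int}, integrable), which in turn forces $M^*(x,A(x,\eta(x)))<\infty$ a.e.\ so the absorption step does not involve subtracting infinities. With $A$ Carath\'eodory (A1), $A(x,\eta(x))$ is measurable, and the final inequality holds for a.e.\ $x\in\Omega$, which is the assertion. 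No further regularity or growth hypothesis on $M$ is needed here.
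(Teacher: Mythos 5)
Your proof is correct and follows essentially the same route as the paper: both apply the Fenchel--Young inequality to the pairing $\bigl(\tfrac{2}{c_A}\eta,\tfrac{c_A}{2}A(x,\eta)\bigr)$, use convexity of $M^*(x,\cdot)$ with $M^*(x,0)=0$ and $c_A\in(0,1]$ to get $M^*\bigl(x,\tfrac{c_A}{2}A(x,\eta)\bigr)\leq\tfrac{c_A}{2}M^*(x,A(x,\eta))$, combine with (A2) after dropping the nonnegative $M(x,\eta)$ term, and absorb $\tfrac{c_A}{2}M^*(x,A(x,\eta))$ into the left-hand side. Your extra remark on finiteness (ensuring the absorption is legitimate) is a sound refinement of the same argument, not a different approach.
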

\begin{proof}
Since $M^*$ is convex, $M^*(x,0)=0$ and $c_A\in (0,1]$, we notice that
\[M^*\left(x,\frac{c_A}{2}A\left(x,\eta\right)\right)\leq \frac{c_A}{2}M^*(x,A(x,\eta)).\]
Taking this into account together with~(A2) and~\eqref{inq:F-Y} we have
\[\begin{split}
c_A\left(M(x,\eta)+M^*(x,A(x,\eta))\right)\leq \frac{c_A}{2} A(x,\eta)\cdot \frac{2}{c_A}\eta &\leq M\left(x,\frac{2}{c_A}\eta\right)+M^*\left(x,\frac{c_A}{2}A(x,\eta)\right)\leq\\ &\leq M\left(x,\frac{2}{c_A}\eta\right)+\frac{c_A}{2}M^*\left(x,A(x,\eta)\right).\end{split}\]
We ignore $M(x,\eta)>0$ on the left-hand side above and rearrange the rest terms   to get the claim.\end{proof}

\begin{rem}\label{rem:2ndconj} For any function $f:\r^M\to\r$ the second conjugate function $f^{**}$ is convex and $f^{**}(x)\leq f(x)$. In fact,  $f^{**}$ is a convex envelope of $f$, namely it is the biggest convex function smaller or equal to~$f$.
\end{rem}

\begin{lem}[cf.~\cite{martin,pgisazg1}]\label{lem:Mass} Suppose  a cube $\Qd$ is an arbitrary one defined in (M) with $\delta_0=1/(8\sqrt{N})$ and function $M:\rn\times[0,\infty)\rightarrow[0,\infty)$  is log-H\"older continuous, that is there exist constants $a_1>0$ and $b_1\geq 1$, such that for all $x,y\in\Omega$ with $|x-y|\leq \frac{1}{2}$ and all $\xi\in\rn$ we have~\eqref{M2'}.  Let us consider function  $ \Mjd $ given by~\eqref{Mjd} and its greatest convex minorant $(\Mjd)^{**}$. Then there exist constants $a,c>0$, such that~\eqref{M2} is satisfied. 
\end{lem}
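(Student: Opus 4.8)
To prove Lemma~\ref{lem:Mass} the plan is to reduce \eqref{M2} to a pointwise comparison delivered by the log-H\"older condition, and then to pass to the greatest convex minorant by means of a controlled convex-combination argument.

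First I would dispose of the elementary geometry. Fixing $\delta<\delta_0=1/(8\sqrt N)$, a cube $\Qd$ with centre $c_j$, a point $x\in\Qd$ and an arbitrary $y\in\tQd\cap\Omega$, one has $|x-c_j|\le\delta\sqrt N$ (half-diagonal of a cube of edge $2\delta$) and $|y-c_j|\le2\delta\sqrt N$ (half-diagonal of a cube of edge $4\delta$), hence $|x-y|\le 3\delta\sqrt N<3\delta_0\sqrt N=3/8<\tfrac12$, so \eqref{M2'} applies to the pair $x,y$. As $|x-y|\le 3\delta\sqrt N<1$ the relevant exponent obeys $0<a_1/|\log|x-y||\le a_1/|\log(3\delta\sqrt N)|<a_1/|\log(3/8)|=:\gamma_0$, and bounding the two arguments of the maximum in \eqref{M2'} by $|\xi|^{a_1/|\log|x-y||}\le 1+|\xi|^{a_1/|\log(3\delta\sqrt N)|}$ and $b_1^{a_1/|\log|x-y||}\le b_1^{\gamma_0}$, I obtain, with $c_0:=\max\{1,b_1^{\gamma_0}\}$ and $\Psi_\delta(\xi):=|\xi|^{-a_1/\log(3\delta\sqrt N)}=|\xi|^{a_1/|\log(3\delta\sqrt N)|}$, that
\[
M(x,\xi)\le c_0\,(1+\Psi_\delta(\xi))\,M(y,\xi)\qquad\text{for every }y\in\tQd\cap\Omega .
\]
Taking the infimum over $y\in\tQd\cap\Omega$ --- legitimate since $M(y,\xi)>0$ for $\xi\neq0$ --- and recalling \eqref{Mjd} then gives $M(x,\xi)\le c_0(1+\Psi_\delta(\xi))\,\Mjd(\xi)$.

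The hard part is to replace $\Mjd(\xi)$ here by $(\Mjd(\xi))^{**}$, at the price of enlarging the constant $a$; this is precisely the place where applying Jensen's inequality to the infimum of convex functions would be illegitimate (the gap in \cite{BenkiraneDouieb}), and is the reason (M) is phrased through the convex minorant. Writing $g:=\Mjd$, I would first record that $g\ge0$, $g$ is even, $g(0)=0$, $g$ is finite, $g$ is uniformly superlinear (by property~4 of Definition~\ref{def:Nf}, since $g\ge\mathrm{ess\,inf}_{x\in\overline\Omega}M(x,\cdot)$), and $g$ is nondecreasing along rays from the origin (each $M(y,\cdot)$ is radially nondecreasing, being convex and vanishing at $0$, and this survives the infimum over $y$). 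Then I would use the Carath\'eodory representation of the convex envelope: for each $\xi$ and each $\varepsilon>0$ there are weights $\lambda_1,\dots,\lambda_{N+1}\ge0$ with $\sum_i\lambda_i=1$ and points $\xi_1,\dots,\xi_{N+1}$ with $\sum_i\lambda_i\xi_i=\xi$ such that $\sum_i\lambda_i g(\xi_i)\le(g(\xi))^{**}+\varepsilon$. The quantitative core, and the step I expect to demand the most care (cf.~\cite{martin,pgisazg1}), is a size bound on the support points: radial monotonicity forbids $g$ from having ``valleys'' far from $\xi$ that the convex hull could exploit, and combining this with the cube-uniform comparison $\sup_{x\in\tQd\cap\Omega}M(x,\eta)\le c_0(1+\Psi_\delta(\eta))g(\eta)$ from the first step and with the uniform superlinearity of $g$ (a support point of radius $L$ contributes at least $\lambda_i\,\mathrm{ess\,inf}_{x\in\overline\Omega}M(x,\xi_i)$ to a sum no larger than $g(\xi)+\varepsilon$), one shows that the representation may be chosen with $|\xi_i|\le K(1+|\xi|)^q$, where $K\ge1$ and $q\ge1$ depend only on $N$ and on the log-H\"older constants $a_1,b_1$; for $|\xi|$ in a fixed bounded set this is trivial, the $\xi_i$ being bounded there.

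The conclusion is then a short computation. By convexity of $M(x,\cdot)$ and the pointwise bound applied at each $\xi_i$, $M(x,\xi)=M\bigl(x,\sum_i\lambda_i\xi_i\bigr)\le\sum_i\lambda_i M(x,\xi_i)\le c_0\sum_i\lambda_i(1+\Psi_\delta(\xi_i))g(\xi_i)$. Since $|\xi_i|\le K(1+|\xi|)^q$, since $\gamma:=a_1/|\log(3\delta\sqrt N)|\in(0,\gamma_0)$, and since $(1+t)^{s}\le c_2(1+t^{s})$ for $t\ge0$ and $s\in(0,q\gamma_0]$ with $c_2$ depending only on $q\gamma_0$, one gets $\Psi_\delta(\xi_i)=|\xi_i|^{\gamma}\le\bigl(K(1+|\xi|)^q\bigr)^{\gamma}\le c_1\bigl(1+|\xi|^{q\gamma}\bigr)=c_1\bigl(1+|\xi|^{-a/\log(3\delta\sqrt N)}\bigr)$, with $a:=qa_1$ and $c_1$ depending only on $K,q,\gamma_0$. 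Hence $1+\Psi_\delta(\xi_i)\le(1+c_1)\bigl(1+|\xi|^{-a/\log(3\delta\sqrt N)}\bigr)$, and substituting together with $\sum_i\lambda_i g(\xi_i)\le(g(\xi))^{**}+\varepsilon$,
\[
M(x,\xi)\le c_0(1+c_1)\bigl(1+|\xi|^{-a/\log(3\delta\sqrt N)}\bigr)\bigl((g(\xi))^{**}+\varepsilon\bigr);
\]
letting $\varepsilon\to0$ yields \eqref{M2} with $c:=c_0(1+c_1)$ and $a:=qa_1$, while for bounded $|\xi|$ the same estimate follows directly from $M(x,\xi)\le c_0(1+\Psi_\delta(\xi))\Mjd(\xi)$ and the boundedness there of $\Psi_\delta$ and of the $\xi_i$. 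Finally \eqref{ass:M:int} is automatic, $M$ being a finite-valued Carath\'eodory function on the bounded set $\Omega$.
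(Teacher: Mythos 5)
Your opening step is correct and is the easy half of the lemma: for $x\in\Qd$ and $y\in\tQd\cap\Omega$ one indeed has $|x-y|\le 3\delta\sqrt N<\tfrac12$, so \eqref{M2'} and taking the infimum over $y$ give $M(x,\xi)\le c_0\bigl(1+|\xi|^{-a_1/\log(3\delta\sqrt N)}\bigr)\Mjd(\xi)$. The genuine gap is that the entire content of the lemma --- passing from $\Mjd$ to $(\Mjd)^{**}$ --- is delegated to the unproven claim that near-optimal Carath\'eodory representations of $(\Mjd)^{**}(\xi)$ may be chosen with support points $|\xi_i|\le K(1+|\xi|)^{q}$, $K,q$ depending only on $N,a_1,b_1$. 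None of the ingredients you invoke can deliver this: property 4 of Definition~\ref{def:Nf} is superlinearity with no rate, so it only makes the weights $\lambda_i$ of distant points small, not their radii; and radial monotonicity does not prevent $(\Mjd)^{**}$ from having affine pieces whose right endpoint is arbitrarily far compared with $|\xi|$. For instance, let every $M(y,\cdot)$ be one convex function that is nearly affine (with a tiny strictly convex correction) on a stretch $[s_0,R]$ with $R/s_0$ as large as one wishes, perturbed by a small $y$-dependent dip in the middle of the stretch; this is compatible with \eqref{M2'} and with convexity of each $M(y,\cdot)$, yet $\Mjd$ is then non-convex along the stretch, $(\Mjd)^{**}$ is affine on an interval of length comparable to $R$, and any representation achieving $(\Mjd)^{**}(\xi)+\varepsilon$ with $\varepsilon$ below the dip size must carry mass near the far endpoint $\sim R$. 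So the support bound in the form you use it (uniform polynomial control, then $\varepsilon\to0$) fails, and since your final estimate charges each support point the full factor $1+|\xi_i|^{-a_1/\log(3\delta\sqrt N)}$, a single distant support point destroys the conclusion.

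What actually rescues \eqref{M2} in such situations is structure your argument has discarded: each individual $M(y,\cdot)$ is convex and is comparable to $M(x,\cdot)$ at \emph{every} intermediate radius, so once $M(y,\cdot)$ is forced up to size $\approx M(x,s)$ at the scale $s=|\xi|$, convexity propagates at least linear growth forward, and the infimum cannot dip by the full factor $c_0(1+t^{\gamma})$ at a distant radius $t$; it is this interplay (one-dimensional convexity of each member of the family combined with \eqref{M2'} at all scales), not a bound on where the envelope of the infimum is supported, that a quantitative proof must exploit --- note the present paper does not reprove the lemma but refers to \cite{martin,pgisazg1} for it. Even the weaker statement you would actually need (controlled-support representations achieving a fixed multiple of $(\Mjd)^{**}(\xi)$ rather than $(\Mjd)^{**}(\xi)+\varepsilon$) is essentially a restatement of \eqref{M2} and is exactly what remains to be proved, so the sentence ``one shows that the representation may be chosen with $|\xi_i|\le K(1+|\xi|)^q$'' is where the whole proof should be. Two minor points: the lemma asserts only \eqref{M2}, so the closing remark about \eqref{ass:M:int} is not needed; and in any case finiteness of a Carath\'eodory function on a bounded set does not give integrability --- under \eqref{M2'} one gets \eqref{ass:M:int} by chaining the comparison across the bounded domain to obtain boundedness of $x\mapsto M(x,z)$.
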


\begin{defi}[$\Delta_2$-condition]\label{def:D2}
 We say that an $N$-function $M:\Omega\times\rn\to\r$ satisfies $\Delta_2$ condition if for a.e. $x\in\Omega$, there exists a constant $c>0$ and nonnegative integrable function $h:\Omega\to\r$ such that
\begin{equation*}
 M(x,2\xi)\leq cM(x,\xi)+h(x).
\end{equation*}
\end{defi}

\begin{lem}\label{lem:constr} For any $N$-function $M:\Omega\times\rn\to\r$ there exists a function $\dm:\r_+\cup\{0\}\to\r$, such that
\begin{equation}
\label{dm:leq:M}
\dm(s)\leq \inf_{x\in\Omega,\ |\xi|=s} M(x,\xi)\qquad\forall_{s\in\r_+\cup\{0\}}.
\end{equation}
\end{lem}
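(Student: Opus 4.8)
The inequality in~\eqref{dm:leq:M} is satisfied by the pointwise infimum $\widehat{M}(s):=\inf_{x\in\Omega,\ |\xi|=s}M(x,\xi)$ itself, so the content needed in the applications (the proofs of Proposition~\ref{prop:bound} and Proposition~\ref{prop:contr:rad:n}, where $\dm$ is required to be a one–variable $N$-function obeying the $\Delta_2$-condition) is that $\dm$ can moreover be chosen with these two extra properties. I would proceed in three steps: record the elementary properties of $\widehat M$; pass to its greatest convex minorant; finally squeeze a $\Delta_2$ $N$-function underneath that minorant.

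Since $M(x,\cdot)$ is convex with $M(x,0)=0$, each radial slice $t\mapsto M(x,t\xi)$ is nondecreasing, so $\widehat M$ is nondecreasing; $\widehat M(0)=0$ and $\widehat M(s)>0$ for $s>0$ by point~1 of Definition~\ref{def:Nf}, $\widehat M(s)/s\to 0$ as $s\to 0^+$ by point~3, and $\widehat M(s)/s\to\infty$ as $s\to\infty$ by point~4. Set $\dm_1:=(\widehat M)^{**}$, the greatest convex minorant (Remark~\ref{rem:2ndconj}); then $\dm_1\le\widehat M$, $\dm_1$ is convex and $\dm_1(0)=0$. The chord from $(0,0)$ to $(s_0,\widehat M(s_0))$ bounds $\dm_1$ above on $[0,s_0]$ with slope $\widehat M(s_0)/s_0\to 0$, so $\dm_1(s)/s\to 0$ at $0^+$; for each $L>0$, choosing $R$ with $\widehat M\ge 2Ls$ on $[R,\infty)$, the affine map $s\mapsto L(s-R)$ lies below $\widehat M$, hence below $\dm_1$, which forces $\dm_1(s)/s\to\infty$; a similar tilted affine minorant that is positive past $s_0/2$ shows $\dm_1>0$ on $(0,\infty)$. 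Thus $\dm_1$ is a convex $N$-function with $\dm_1\le\widehat M$, but it may fail $\Delta_2$ (for instance when $M$ grows exponentially).

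Now let $p:=\dm_1'$ be the right derivative: $p$ is nondecreasing, $p>0$ on $(0,\infty)$, and $p(s)\ge\dm_1(s)/s\to\infty$. Define a nondecreasing function $q$ by $q:=p$ on $[0,1]$ and, for $k\ge 0$, $q\equiv b_k$ on $[2^k,2^{k+1})$ with $b_0:=p(1)$ and $b_{k+1}:=\min\{p(2^{k+1}),\,2b_k\}$, and put $\dm(s):=\int_0^s q(t)\,dt$. Then $b_k\le p(2^k)$ and $(b_k)_k$ is nondecreasing, so $q\le p$ throughout, and integrating over $[0,1]$ and $[1,s]$ separately gives $\dm\le\dm_1\le\widehat M$; were $(b_k)_k$ bounded then $p(2^{k+1})>2b_k$ eventually, forcing $b_{k+1}=2b_k$, a contradiction, so $q\to\infty$ and $\dm(s)/s\to\infty$; since $q(0^+)=p(0^+)=0$, also $\dm(s)/s\to 0$ at $0^+$; $\dm$ is convex because $q$ is nondecreasing, $\dm(0)=0$ and $\dm>0$ on $(0,\infty)$ — so $\dm$ is a one–variable $N$-function. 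Finally, for $s\ge 1$ the substitution $t=2\tau$ together with $q(2\tau)\le 2q(\tau)$ for $\tau\ge 1$ (the leftover integral over $\tau\in[0,1]$ being a fixed constant) yields $\dm(2s)\le 4\dm(s)+C$ for all $s$, i.e. $\dm$ satisfies the $\Delta_2$-condition of Definition~\ref{def:D2} with $h\equiv C$. This is the required $\dm$; whenever an application needs $P$ with $P(s)\le\frac{c_A}{2c_P}\widehat M(s)$, one takes $P:=\lambda\dm$, using that multiplication by a positive constant preserves the $N$-function and $\Delta_2$ properties and leaves the modular Poincaré constant unchanged ($c_P=c_{\dm}$), so it suffices to pick $\lambda\le\frac{c_A}{2c_{\dm}}$.

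\textbf{Main obstacle.} The only delicate step is the last one: $\dm$ must stay below $\widehat M$, remain a genuine $N$-function, and satisfy $\Delta_2$ simultaneously, and these requirements compete — a $\Delta_2$ $N$-function has at most polynomial growth, whereas $\widehat M$ may be only marginally superlinear and $\dm_1$ may grow arbitrarily fast. The truncation $b_{k+1}=\min\{p(2^{k+1}),2b_k\}$ is precisely the device that caps the growth to polynomial rate without ever overshooting $p$, and the short contradiction argument that $b_k\to\infty$ is what guarantees that superlinearity, hence the $N$-function property, survives the truncation.
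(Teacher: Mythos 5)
Your argument is correct, and it shadows the paper's proof in its first two steps — both pass from the radial infimum $\widehat M(s)=\inf_{x\in\Omega,\,|\xi|=s}M(x,\xi)$ to its greatest convex minorant $(\widehat M)^{**}$ (the paper's $m_*$), using exactly the observation of Remark~\ref{rem:2ndconj} — but it diverges in the decisive last step, where the growth of the convex minorant is capped so as to force $\Delta_2$. The paper defines $\dm$ as the solution of a differential equation that switches between $\dm'=m_*'$ and $\dm'=\alpha\dm/s$ (so $\dm$ grows at most like $s^\alpha$ on the bad set), and then outsources the $\Delta_2$ property to a theorem in Rao--Ren; you instead cap the right derivative $p=\dm_1'$ dyadically via $b_{k+1}=\min\{p(2^{k+1}),2b_k\}$ and verify $\Delta_2$ by hand from $q(2\tau)\le 2q(\tau)$, which gives $\dm(2s)\le 4\dm(s)+C$. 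Your version is self-contained (no external citation, no well-posedness discussion for the ODE, whose justification of $\dm'\le m_*'$ the paper leaves implicit), and your checks that superlinearity survives the capping ($b_k\to\infty$ by the contradiction argument) and that the rescaling $P=\lambda\dm$ fits the way the lemma is invoked in Propositions~\ref{prop:bound} and~\ref{prop:contr:rad:n} are both correct and actually more complete than what the paper records; the paper's ODE device in exchange keeps $\dm$ equal to $m_*$ wherever the latter already grows slowly, which is a cosmetic rather than substantive advantage here. One caveat you share with the paper: the positivity of $\widehat M(s)$ for $s>0$ (needed for $\dm_1>0$, hence for the $N$-function property of $\dm$) does not follow formally from point~1 of Definition~\ref{def:Nf} alone, since the infimum there is taken over $x$ for each fixed $\xi$ and an additional infimum over the sphere $\{|\xi|=s\}$ of an upper semicontinuous function could in principle degenerate; this is glossed over in the paper's proof as well, so it is not a defect specific to your construction, but it deserves a sentence if one wants the $N$-function claim airtight.
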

\begin{proof}
We consider the $N$-function $\dm:\r_+\cup\{0\}\to\r$ defined as follows. Let
\begin{equation*}
m_*(r)=\left(\inf_{x\in\Omega,\ |\xi|=r} M(x,\xi)\right)^{**}.
\end{equation*} 
Then, let $\dm$ be a solution to the differential equation
\[\dm'(s)=\left\{\begin{array}{ll}
m'_*(s)&\text{for }s:\ m'_*(s)\leq \alpha\frac{m_*(s)}{s},\\
\alpha\frac{\dm(s)}{s}&\text{for }s:\ m'_*(s)> \alpha\frac{m_*(s)}{s},
\end{array}\right.\]
with the initial condition $\dm(0)=0=m_*(0)$ and a certain $\alpha>1$. Note that $\dm'(s)\leq m'_*(s)$  for every $s$, so $\dm(s)\leq m_*(s)$. Due to Lemma~\ref{rem:2ndconj} also $ m_*(s)\leq  \inf_{x\in\Omega,\ |\xi|=s} M(x,\xi)$ for every $s$. Thus we have~\eqref{dm:leq:M}. Moreover, by~\cite[Chapter~II.2.3, Theorem~3, point~1. (ii)]{rao-ren} $\dm$ satisfies $\Delta_2$-condition. 
\end{proof}

The vital tool in our study is the following modular Poincar\'{e}-type inequality being a direct consequence of \cite[Theorem~2.3]{pgisazg1}.
\begin{theo}[Modular Poincar\'e inequality,~\cite{pgisazg1}]\label{theo:Poincare} 
Let $P:\rp\to\rp$ be an arbitrary function satisfying $\Delta_2$-condition and $\Omega\subset\rn$ be a bounded domain,
then there exist $c_P=c(\Omega,N,P)>0$ such that for every $g\in W^{1,1}(\OT)$, such that $\int_\OT P(|\nabla g|)\,dx\,dt<\infty$, we have
\[\int_\OT P(|g|)\,dx\,dt\leq c_P
\int_\OT P(|\nabla g|)\,dx\,dt.\]
\end{theo}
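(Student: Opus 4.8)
The plan is to deduce the parabolic statement from the spatial modular Poincar\'e inequality \cite[Theorem~2.3]{pgisazg1} by slicing in the time variable and applying the Fubini theorem. The essential point is that the constant furnished by the elliptic inequality depends only on $\Omega$, $N$ and $P$ — through the geometry of $\Omega$ and the $\Delta_2$-constant of $P$ — and in particular not on the individual function, so it will be uniform across time slices.

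First I would fix $g\in W^{1,1}(\OT)$ with $\iOT P(|\nabla g|)\,dx\,dt<\infty$, there being nothing to prove otherwise. Since $P$ is continuous and $g,\nabla g$ are measurable on $\OT$, the maps $(t,x)\mapsto P(|g(t,x)|)$ and $(t,x)\mapsto P(|\nabla g(t,x)|)$ are measurable, and by the Fubini theorem the function $t\mapsto\iO P(|\nabla g(t,x)|)\,dx$ belongs to $L^1(0,T)$. Consequently, for a.e.\ $t\in(0,T)$ the slice $g(t,\cdot)$ belongs to $W^{1,1}(\Omega)$, is an admissible competitor in \cite[Theorem~2.3]{pgisazg1}, and satisfies $\iO P(|\nabla g(t,x)|)\,dx<\infty$. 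Applying \cite[Theorem~2.3]{pgisazg1} to each such slice yields
\begin{equation*}
\iO P(|g(t,x)|)\,dx\leq c_P\iO P(|\nabla g(t,x)|)\,dx,\qquad c_P=c(\Omega,N,P),
\end{equation*}
with $c_P$ independent of $t$.

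Finally, both sides of the last inequality are measurable functions of $t$ (again by Fubini), so integrating over $(0,T)$ and using the Fubini theorem a second time on the left gives
\begin{equation*}
\iOT P(|g|)\,dx\,dt\leq c_P\iOT P(|\nabla g|)\,dx\,dt,
\end{equation*}
which is the assertion. There is no genuine obstacle here: the only matters requiring (routine) attention are the measurability of the slicewise integrals in $t$ and the observation that the elliptic Poincar\'e constant is purely structural, hence uniform in the time variable; both are immediate once \cite[Theorem~2.3]{pgisazg1} is in hand. If preferred, one could instead run the reduction at the level of the Luxemburg norm, $\|g(t,\cdot)\|_{L_P(\Omega)}\le C\|\nabla g(t,\cdot)\|_{L_P(\Omega)}$, and then pass to the modular form using the $\Delta_2$-condition, but the direct time-slicing above is the shortest route.
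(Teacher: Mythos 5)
Your proposal is correct and is essentially the paper's own argument: the paper simply declares the parabolic inequality a direct consequence of the elliptic modular Poincar\'e inequality of \cite[Theorem~2.3]{pgisazg1}, and your time-slicing with Fubini, together with the observation that the constant $c_P=c(\Omega,N,P)$ is purely structural and hence uniform in $t$, is exactly how that deduction goes. The only caveat is inherited from the statement itself rather than from your proof: the inequality can only hold in the class for which the cited elliptic result is formulated (zero boundary values), which is also how it is used throughout the paper.
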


\begin{defi}[Uniform integrability] We call a sequence  $\{f_n\}_{n=1}^\infty$ of measurable functions $f_n:\Omega\to \rn$ 
 uniformly integrable if
\[\lim_{R\to\infty}\left(\sup_{n\in\mathbb{N}}\int_{\{x:|f_n(x)|\geq R\}}|f_n(x)|dx\right)=0.\]  
 \end{defi}
 
\begin{defi}[Modular convergence]\label{def:convmod}
We say that a sequence $\{\xi_i\}_{i=1}^\infty$ converges modularly to $\xi$ in~$L_M(\Omega;\rn)$ (and denote it by $\xi_i\xrightarrow[i\to\infty]{M}\xi$), if 
\begin{itemize}
\item[i)] there exists $\lambda>0$ such that
\begin{equation*}
\int_{\Omega}M\left(x,\frac{\xi_i-\xi}{\lambda}\right)dx\to 0,
\end{equation*}
equivalently
\item[ii)] there exists $\lambda>0$ such that 
\begin{equation*}
 \left\{M\left(x,\frac{\xi_i}{\lambda}\right)\right\}_i \ \text{is uniformly integrable in } L^1(\Omega)\quad \text{and}\quad \xi_i\xrightarrow[]{i\to\infty}\xi \ \text{in measure}.
\end{equation*}
\end{itemize}
\end{defi}

\begin{lem}[Modular-uniform integrability,~\cite{gwiazda2}]\label{lem:unif}
Let $M$ be an $N$-function and $\{f_n\}_{n=1}^\infty$ be a~sequence of measurable functions such that $f_n:\Omega\to \rn$ and $\sup_{n\in\N}\int_\Omega M(x,f_n(x))dx<\infty$. Then the sequence $\{f_n\}_{n=1}^\infty$ is uniformly integrable.
\end{lem}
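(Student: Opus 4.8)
The plan is to deduce uniform integrability from the uniform superlinear growth built into the definition of an $N$-function; this is exactly the (easy) half of the de la Vallée-Poussin criterion. Set $C:=\sup_{n\in\N}\int_\Omega M(x,f_n(x))\,dx$, which is finite by assumption, and recall that we must show
\[
\lim_{R\to\infty}\Big(\sup_{n\in\N}\int_{\{|f_n|\geq R\}}|f_n(x)|\,dx\Big)=0 .
\]
The first step is to turn point~4 of Definition~\ref{def:Nf} into a quantitative statement: since $\lim_{|\xi|\to\infty}\mathrm{ess\,inf}_{x\in\overline\Omega}\frac{M(x,\xi)}{|\xi|}=\infty$, for every $\lambda>0$ there exists $R_\lambda>0$ with
\[
M(x,\xi)\geq\lambda|\xi|\qquad\text{for a.e.\ }x\in\Omega\text{ and all }\xi\in\rn\text{ with }|\xi|\geq R_\lambda .
\]
It is essential here that this lower bound holds uniformly in $x$; an $x$-dependent threshold would not be usable.

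The second step is an elementary estimate. Fix $\lambda>0$, let $R\geq R_\lambda$ and $n\in\N$. On the set $\{|f_n|\geq R\}$ one has $|f_n(x)|\geq R_\lambda$, so the bound above applies pointwise there, whence
\[
\int_{\{|f_n|\geq R\}}|f_n(x)|\,dx\ \leq\ \frac1\lambda\int_{\{|f_n|\geq R\}}M(x,f_n(x))\,dx\ \leq\ \frac1\lambda\int_\Omega M(x,f_n(x))\,dx\ \leq\ \frac C\lambda .
\]
In particular $\sup_{n\in\N}\int_{\{|f_n|\geq R\}}|f_n(x)|\,dx\leq C/\lambda$ for every $R\geq R_\lambda$. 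The third step is the conclusion: given $\varepsilon>0$, choose $\lambda:=C/\varepsilon$ and the corresponding $R_\lambda$; then the displayed supremum is $\leq\varepsilon$ for all $R\geq R_\lambda$, which is precisely the asserted uniform integrability.

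There is no genuine obstacle in this argument; the only subtlety is to invoke the growth hypothesis in the $x$-uniform form supplied by point~4 of the $N$-function definition, rather than a merely pointwise-in-$x$ version. As an alternative one could pass to the one-variable $\Delta_2$-minorant $\dm$ of Lemma~\ref{lem:constr}, note that $\int_\Omega\dm(|f_n|)\,dx\leq C$ and that $\dm$ is superlinear, and then quote the classical de la Vallée-Poussin theorem; the direct estimate above is cleaner, since it needs neither the $\Delta_2$-condition nor $|\Omega|<\infty$.
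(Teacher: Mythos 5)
Your argument is correct, and since the paper does not prove this lemma itself (it is quoted from~\cite{gwiazda2}), there is nothing to diverge from: what you give is precisely the standard easy half of the de la Vall\'ee-Poussin criterion, using the $x$-uniform superlinearity from point~4 of Definition~\ref{def:Nf}, which is the expected argument. One small refinement: point~4 gives, for each fixed $\xi$ with $|\xi|\geq R_\lambda$, the bound $M(x,\xi)\geq\lambda|\xi|$ only for a.e.~$x$, with the exceptional null set a priori depending on $\xi$; to use it pointwise at $\xi=f_n(x)$ you should pass to a countable dense set of such $\xi$ and invoke the continuity of $M(x,\cdot)$ (the Carath\'eodory property in Definition~\ref{def:Nf}) to obtain a single null set outside of which the bound holds for all $|\xi|\geq R_\lambda$. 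With that remark inserted, the proof is complete.
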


The following result can be obtained by the method of the proof of~\cite[Theorem~7.6]{Musielak}.
\begin{lem}[Density of simple functions, \cite{Musielak}]\label{lem:dens}
Suppose~\eqref{ass:M:int}. Then the set of simple functions integrable on $\OT$ is dense in $L_M(\OT)$ with respect to the modular topology.
\end{lem}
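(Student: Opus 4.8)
\emph{Proof proposal.} The plan is to produce, for a given $\xi\in L_M(\OT;\rn)$, a sequence of simple integrable functions converging modularly to $\xi$, in two stages — a truncation reducing matters to bounded measurable functions, then an approximation of bounded measurable functions by simple ones — glued by a diagonal argument. Note first that \eqref{ass:M:int} is available on $\OT$: since $M$ does not depend on $t$ and $|[0,T]|<\infty$, one has $\int_{\OT}M(x,z)\,dx\,dt=T\int_\Omega M(x,z)\,dx<\infty$ for every $z\in\rn$.

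Fix $\xi\in L_M(\OT;\rn)$ and, using the definition of the Luxemburg norm, choose $\lambda_0>0$ with $\int_{\OT}M(x,\xi/\lambda_0)\,dx\,dt<\infty$. Put $\xi^{(j)}:=\xi\,\mathds{1}_{\{|\xi|\le j\}}$. Since $M(x,0)=0$ we have $M(x,(\xi-\xi^{(j)})/\lambda_0)=M(x,\xi/\lambda_0)\,\mathds{1}_{\{|\xi|>j\}}$, which is bounded by the integrable function $M(x,\xi/\lambda_0)$ and tends to $0$ a.e.\ as $j\to\infty$ (as $\xi$ is finite a.e.). By dominated convergence, $\int_{\OT}M(x,(\xi-\xi^{(j)})/\lambda_0)\,dx\,dt\to 0$; this is the first stage.

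For the second stage, fix $j$ and approximate the bounded function $\xi^{(j)}$ by simple ones. Partition the cube $[-j,j]^N\subset\rn$ — which contains the range of $\xi^{(j)}$ — into finitely many Borel sets $E_i$ of diameter $<1/n$, choose $z_i\in E_i$, and set $s_n:=\sum_i z_i\,\mathds{1}_{(\xi^{(j)})^{-1}(E_i)}$. Each $s_n$ is simple and bounded, hence integrable on the finite-measure set $\OT$, and $|s_n-\xi^{(j)}|<1/n$ everywhere. The key estimate: for $n\ge 1$, convexity together with $M(x,0)=0$ gives $M(x,(s_n-\xi^{(j)})/\lambda_0)\le\tfrac{1}{n}\,M(x,n(s_n-\xi^{(j)})/\lambda_0)$; the argument of the right-hand side lies in the cube $[-1/\lambda_0,1/\lambda_0]^N$, and, being a convex combination of its $2^N$ vertices $v$, satisfies $M(x,n(s_n-\xi^{(j)})/\lambda_0)\le\max_v M(x,v)$ by convexity. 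Integrating, $\int_{\OT}M(x,(s_n-\xi^{(j)})/\lambda_0)\,dx\,dt\le\tfrac{1}{n}\sum_v\int_{\OT}M(x,v)\,dx\,dt\to 0$ as $n\to\infty$, the (finite) sum being controlled by \eqref{ass:M:int}.

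To finish, glue the two stages with the convexity inequality $M(x,(a+b)/(2\lambda_0))\le\tfrac{1}{2}M(x,a/\lambda_0)+\tfrac{1}{2}M(x,b/\lambda_0)$ applied to $a=\xi-\xi^{(j)}$ and $b=\xi^{(j)}-s_n$: given $\varepsilon>0$, first choose $j$ so that the first-stage integral is $<\varepsilon$, then, with $j$ frozen, choose $n$ so that the second-stage integral is $<\varepsilon$, which yields a simple integrable function $s$ with $\int_{\OT}M(x,(\xi-s)/(2\lambda_0))\,dx\,dt<\varepsilon$; letting $\varepsilon\to 0$ produces a sequence of simple integrable functions converging modularly to $\xi$ with common parameter $2\lambda_0$. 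The step I expect to be the main obstacle — and the only one going beyond the classical isotropic argument of \cite[Theorem~7.6]{Musielak}, and the only place \eqref{ass:M:int} is genuinely used — is the second stage: since $M(x,\cdot)$ is anisotropic and possibly unbounded in $x$, one cannot dominate $M(x,\cdot)$ near the origin by a single function $M(x,z)$, and replacing it by the finite maximum over the vertices of a cube, via convexity, is what makes the modular estimate go through.
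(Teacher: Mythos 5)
Your proof is correct and follows essentially the route the paper intends: the paper gives no argument for Lemma~\ref{lem:dens} beyond invoking the method of \cite[Theorem~7.6]{Musielak}, and your two-stage scheme (truncation of $\xi$ to $\xi\,\mathds{1}_{\{|\xi|\le j\}}$, then uniform approximation of the bounded function by simple functions, glued by convexity with the fixed parameter $2\lambda_0$) is exactly that classical method. Your treatment of the anisotropy --- dominating $M(x,\cdot)$ on a small cube by $\max_v M(x,v)$ over its $2^N$ vertices via convexity, which is indeed the only place \eqref{ass:M:int} enters --- is the right adaptation of the isotropic argument, so I see no gap.
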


\begin{defi}[Biting convergence]\label{def:convbiting}
Let $f_n,f\in  L^1(\Omega)$ for every $n\in\N$. We say that a sequence $\{f_n\}_{n=1}^\infty$ converges in the sense of biting to $f$ in~$L^1(\Omega)$ (and denote it by $f_n\xrightarrow[]{b}f$), if  there exists a sequence of measurable $E_k$ -- subsets of $\Omega$, such that $\lim_{k\to\infty} |E_k|=0$, such that for every $k$ we have $f_n\to f$ in $L^1(\Omega\setminus E_k)$.
\end{defi}

To present basic information on the Young measures, let us denote the space of signed Radon measures with finite mass by ${\cal M}(\rn)$.
\begin{theo}[Fundamental theorem on the Young measures]\label{theo:Youngmeas}
Let $U\subset\rn$ and $z_j:U\to\rn$ be a~sequence of measurable functions. Then there exists a~subsequence $\{z_{j,k}\}$ and a~family of weakly-* measurable maps $\nu_x:U\to{\cal M}(\rn)$, such that:
\begin{itemize}
\item $\nu_x\geq 0$, $\|\nu_x\|_{{\cal M}(\rn)}=\int_\rn d\nu_x\leq 1$ for a.e. $x\in U$.
\item For every $f\in C_0(\rn)$, we have $f(z_{j,k})\xrightharpoonup[]{*} \bar{f}$ in $L^\infty(U)$. Moreover,  $\bar{f}(x)=\int_\rn f(\lambda) \, d\nu_x(\lambda)$.
\item Let $K\subset\rn$ be compact. Then $\supp\,\nu_x\subset K,$ if $dist(z_{j,k},K)\to 0$ in measure.
\item $\|\nu_x\|_{{\cal M}(\rn)}=1$ for a.e. $x\in U$ if and only if the tightness condition is satisfied, that is $\lim_{R\to\infty}\sup_k|\{|z_{j,k}|\geq R\}|=0$.
\item If the tightness condition is satisfied, $A\subset U$ is measurable, $f\in C(\rn)$, and $\{f(z_{j,k})\}$ is relatively weakly compact in $L^1 (A)$, then $f(z_{j,k})\xrightharpoonup[]{} \bar{f}$ in $L^1(A)$ and  $\bar{f}(x)=\int_\rn f(\lambda) \, d\nu_x(\lambda)$.
\end{itemize}  
The family of maps $\nu_x:U\to {\cal M}(\rn)$ is called the
Young measure generated by the sequence $\{z_{j,k}\}$.
\end{theo}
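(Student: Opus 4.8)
The plan is to follow the classical disintegration scheme: realise each $z_j$ as the elementary Young measure $x\mapsto\delta_{z_j(x)}$, extract a weak-$*$ limit in an appropriate dual space, and then read off every listed property from the defining convergence. First I would reduce to the case $|U|<\infty$. Since $U\subset\rn$, write $U=\bigcup_m U_m$ with $U_m$ increasing and $|U_m|<\infty$, carry out the construction below on each $U_m$, extract nested subsequences, and diagonalise; by uniqueness of weak-$*$ limits the resulting maps $\nu_x$ agree on overlaps and define a family a.e.\ on all of $U$.

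For the compactness step, on $U$ of finite measure the space $L^1(U;C_0(\rn))$ is a separable Banach space whose dual can be identified (via the Riesz representation $C_0(\rn)^*={\cal M}(\rn)$ and the duality theory of Bochner spaces) with the space of weakly-$*$ measurable essentially bounded maps $x\mapsto\nu_x\in{\cal M}(\rn)$, under the pairing $\langle\nu_\cdot,g\rangle=\int_U\int_\rn g(x,\lambda)\,d\nu_x(\lambda)\,dx$. Each map $x\mapsto\delta_{z_j(x)}$ lies in the unit ball of this dual, so the sequential weak-$*$ compactness of bounded sets in duals of separable spaces yields a subsequence $\{z_{j,k}\}$ and a family $\nu_\cdot$ with
\[
\int_U g(x,z_{j,k}(x))\,dx\ \longrightarrow\ \int_U\int_\rn g(x,\lambda)\,d\nu_x(\lambda)\,dx\qquad\text{for all }g\in L^1(U;C_0(\rn)).
\]
Specialising $g(x,\lambda)=\varphi(x)f(\lambda)$ with $\varphi\in L^1(U)$, $f\in C_0(\rn)$, and using the uniform bound $\|f(z_{j,k})\|_{L^\infty}\le\|f\|_\infty$, gives $f(z_{j,k})\xrightharpoonup{*}\bar f$ in $L^\infty(U)$ with $\bar f(x)=\int_\rn f\,d\nu_x$. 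Positivity of $\nu_x$ is inherited from that of $\delta_{z_{j,k}(x)}$; testing against cut-offs $f_R$ with $\mathds{1}_{B_R}\le f_R\le\mathds{1}_{B_{2R}}$, $f_R\in C_0(\rn)$, gives $\int_A\int f_R\,d\nu_x\,dx=\lim_k\int_A f_R(z_{j,k})\,dx\le|A|$ for every measurable $A\subset U$, and letting $R\to\infty$ by monotone convergence yields $\|\nu_x\|_{{\cal M}(\rn)}\le1$ a.e. For the support property, if $\mathrm{dist}(z_{j,k},K)\to0$ in measure then any $f\in C_0(\rn)$ with $\supp f\cap K=\emptyset$ satisfies $f(z_{j,k})\to0$ in measure and boundedly, hence $\int_\rn f\,d\nu_x=0$ a.e.; ranging over a countable family of such $f$ separating points off $K$ gives $\supp\nu_x\subset K$.

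For the tightness dichotomy: if the tightness condition holds, $\int_A(1-f_R(z_{j,k}))\,dx\le|\{|z_{j,k}|\ge R\}|\le\varepsilon_R$ with $\varepsilon_R\to0$ uniformly in $k$, so in the limit $\int_A\int f_R\,d\nu_x\,dx\ge|A|-\varepsilon_R$, and $R\to\infty$ forces $\int_A\|\nu_x\|\,dx\ge|A|$, whence $\|\nu_x\|=1$ a.e.; the converse is the same computation on $A=U$, using $|U|<\infty$, since $|U|-\int_U\int f_R\,d\nu_x\,dx\to0$ as $R\to\infty$ then forces $\int_U(1-f_R(z_{j,k}))\,dx\to0$ uniformly along the chosen subsequence. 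For the last bullet, if $f\in C(\rn)$ and $\{f(z_{j,k})\}$ is relatively weakly compact in $L^1(A)$, Dunford--Pettis gives its uniform integrability; combined with tightness this yields $\int_A|f(z_{j,k})-f_m(z_{j,k})|\,dx\to0$ uniformly in $k$ as $m\to\infty$, where $f_m:=f\chi_m\in C_0(\rn)$ with $\chi_m$ a cut-off equal to $1$ on $B_m$. Since $f_m(z_{j,k})\xrightharpoonup{}\int_\rn f_m\,d\nu_x$ in $L^1(A)$, and (using $\|\nu_x\|=1$ and Fatou to see $\int_\rn|f|\,d\nu_x<\infty$ a.e.) $\int_\rn f_m\,d\nu_x\to\int_\rn f\,d\nu_x$ in $L^1(A)$, an approximation argument identifies the weak $L^1(A)$-limit of $f(z_{j,k})$ as $\bar f(x)=\int_\rn f\,d\nu_x$.

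The main obstacle I expect is the disintegration/compactness step: identifying $(L^1(U;C_0(\rn)))^*$ with the space of weakly-$*$ measurable bounded ${\cal M}(\rn)$-valued maps (measurability of $x\mapsto\nu_x$, the essential-supremum norm, and sequential weak-$*$ compactness of the unit ball via separability), together with the $\sigma$-finite reduction; once the defining convergence is available, all the remaining properties are extractions from it. The second delicate point is the tightness $\Leftrightarrow$ full-mass equivalence, since in the limit mass can only be lost, not gained, and one must quantitatively rule out escape to infinity.
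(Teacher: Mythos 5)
Your proof is essentially correct, but there is nothing in the paper to compare it with: Theorem~\ref{theo:Youngmeas} is stated in Appendix~A as a classical background result and is not proved there (it is the fundamental theorem on Young measures in the form of Ball and Pedregal; the paper cites \cite{pedr} nearby only for the biting lemma). Your reconstruction is the canonical argument: view each $z_j$ as the elementary Young measure $x\mapsto\delta_{z_j(x)}$, identify $(L^1(U;C_0(\rn)))^*$ with the essentially bounded weakly-$*$ measurable ${\cal M}(\rn)$-valued maps, extract a weak-$*$ convergent subsequence by separability, and read off the listed properties from the defining convergence; the cut-off computations for $\|\nu_x\|\le 1$, the support statement, and the Dunford--Pettis/cut-off identification of the weak $L^1$ limit in the last bullet are all standard and correctly executed. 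Two small points deserve attention. First, in the converse of the tightness equivalence, weak-$*$ convergence only controls $\limsup_k\int_U\bigl(1-f_R(z_{j,k})\bigr)\,dx$, not the supremum over all $k$; to obtain $\sup_k$ you must also use that each fixed $z_{j,k}$ is a.e.\ finite on a set of finite measure, so $|\{|z_{j,k}|\ge R\}|\to 0$ as $R\to\infty$ for each $k$, and handle the finitely many initial indices separately. Second, your reduction to $|U|<\infty$ is genuinely needed exactly there: for $|U|=\infty$ the equivalence as stated fails (take $z_k(x)=x$ on $U=\rn$, where $\|\nu_x\|_{{\cal M}(\rn)}=1$ a.e.\ but the tightness condition fails), which is why Ball's original formulation localizes tightness to bounded subsets; since the paper only applies the theorem on the bounded cylinder $\OT$, this defect of the statement is harmless, but your finite-measure reduction should be flagged as essential for that bullet rather than a mere convenience.
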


\begin{theo}[The Chacon Biting Lemma, cf. Theorem~6.6 in \cite{pedr}]\label{theo:bitinglemma1}Let the sequence $\{f_n\}_n$ be uniformly bounded in $L^1(\Omega)$. Then there exists $f\in L^1(\Omega)$, such that $f_n\xrightarrow[]{b}f$.
\end{theo}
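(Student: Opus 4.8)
This is the classical Chacon biting lemma (Brooks--Chacon theorem); since $\Omega$ is a bounded Lipschitz domain we have $|\Omega|<\infty$, so I would work throughout on a finite measure space and extract subsequences freely --- as the statement is used in the paper, it should be read up to a subsequence. The plan has three parts. First I would produce a decreasing sequence of ``bad sets'' $E_1\supseteq E_2\supseteq\cdots$ with $|E_k|\to 0$ such that $\{f_n\mathbf{1}_{\Omega\setminus E_k}\}_n$ is uniformly integrable for every $k$. Then, by the Dunford--Pettis theorem, $\{f_n\}$ restricted to $\Omega\setminus E_k$ is relatively weakly compact in $L^1(\Omega\setminus E_k)$. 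Finally, a diagonal argument over $k$ produces one subsequence along which $f_n$ converges weakly in $L^1(\Omega\setminus E_k)$ for every $k$; the weak limits on the sets $\Omega\setminus E_k$ are consistent (each restricts to the previous one), hence patch to a function $f$ defined on $\Omega\setminus\bigcap_k E_k$, i.e.\ a.e.\ on $\Omega$, with $\|f\|_{L^1(\Omega)}\le\liminf_n\|f_n\|_{L^1(\Omega)}$ by weak lower semicontinuity of the norm together with monotone convergence in $k$; this is precisely $f_n\xrightarrow{b}f$ in the sense of Definition~\ref{def:convbiting}.

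The limit can be identified more concretely via the fundamental theorem on the Young measures (Theorem~\ref{theo:Youngmeas}), which I would use to streamline the last step. Passing to a subsequence generating a Young measure $\{\nu_x\}_{x\in\Omega}$, the uniform $L^1$-bound and the Chebyshev inequality give $|\{|f_n|\ge R\}|\le(\sup_m\|f_m\|_{L^1(\Omega)})/R$, so the tightness condition holds and $\nu_x$ is a probability measure for a.e.\ $x$; testing the Young-measure representation against $\lambda\mapsto\min\{|\lambda|,R\}$ and letting $R\to\infty$ by monotone convergence shows $x\mapsto\int_\R|\lambda|\,d\nu_x(\lambda)$ lies in $L^1(\Omega)$, so $f(x):=\int_\R\lambda\,d\nu_x(\lambda)$ is a well-defined element of $L^1(\Omega)$. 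On each $\Omega\setminus E_k$ the restricted sequence is uniformly integrable, hence relatively weakly $L^1$-compact, so the last assertion of Theorem~\ref{theo:Youngmeas} applied with the identity map yields $f_n\rightharpoonup f$ weakly in $L^1(\Omega\setminus E_k)$ with this same $f$.

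The genuine obstacle is the first part: constructing an \emph{$n$-independent} sequence of small sets off which $\{f_n\}$ is uniformly integrable, since the natural concentration sets $\{|f_n|>R\}$, although of measure $\le(\sup_m\|f_m\|_{L^1(\Omega)})/R$ by Chebyshev, move with $n$. I would argue by truncation and concentration: writing $f_n=T_R(f_n)+\big(f_n-T_R(f_n)\big)$, the first term is bounded in $L^\infty$ (hence, for fixed $R$, uniformly integrable) and the ``tail'' $f_n-T_R(f_n)$ is supported in $\{|f_n|>R\}$ and bounded in $L^1$; after a diagonal extraction the nonnegative measures $|f_n-T_R(f_n)|\,dx$ converge weakly-$*$ to finite Radon measures $\rho_R$ for every $R\in\N$, and since $|f_n-T_R(f_n)|$ is pointwise nonincreasing in $R$ the $\rho_R$ are nonincreasing, with limit $\rho_\infty$ of total mass at most $\sup_m\|f_m\|_{L^1(\Omega)}$. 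Because the approximants of $\rho_R$ live on sets of measure $\le(\sup_m\|f_m\|_{L^1(\Omega)})/R$, which shrink as $R\to\infty$, the limit $\rho_\infty$ turns out to be singular with respect to Lebesgue measure, say concentrated on a Lebesgue-null set $Z$. Choosing $E_k\supseteq Z$ open with $|E_k|<1/k$ (and replacing $E_k$ by $\bigcap_{i\le k}E_i$ to make the sequence decreasing), one gets $\rho_{R_k}(\Omega\setminus E_k)\le\rho_\infty(\Omega\setminus E_k)+(\rho_{R_k}-\rho_\infty)(\Omega)\to 0$ for a suitable choice $R_k\uparrow\infty$, so the tails $f_n-T_{R_k}(f_n)$ have uniformly small $L^1$-mass on $\Omega\setminus E_k$, which together with the $L^\infty$-bound on $T_{R_k}(f_n)$ delivers uniform integrability of $\{f_n\mathbf{1}_{\Omega\setminus E_k}\}_n$. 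Making all of these extractions compatible (for the $E_k$, for the Young measure, and for the weak limits) by a single diagonalization is the closing bookkeeping; the one point that genuinely deserves care is the singularity of $\rho_\infty$, which rests on a Lebesgue-differentiation-type argument showing that mass concentrating on sets of vanishing measure cannot contribute an absolutely continuous part in the limit.
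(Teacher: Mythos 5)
The paper does not prove this lemma at all --- it is quoted from Pedregal's book --- so your argument has to stand on its own, and it does not. The parts surrounding the construction are fine: given sets $E_k$ with $|E_k|\to0$ off which the sequence is uniformly integrable, the Dunford--Pettis/diagonal/patching argument works, and your identification of the biting limit with the barycenter $f(x)=\int_{\R^N}\lambda\,d\nu_x(\lambda)$, including the truncation-plus-monotone-convergence proof that $f\in L^1(\Omega)$, is correct. The gap is exactly at the point you yourself flag as the genuine obstacle: the claim that the limit $\rho_\infty$ of the tail measures is singular with respect to Lebesgue measure, ``because its approximants live on sets of measure $\le C/R$'', is false. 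Take $\Omega=(0,1)$ and $f_n=n\,\mathds{1}_{A_n}$, where $A_n$ is the union of $n$ evenly spaced intervals of length $1/n^2$, so that $\|f_n\|_{L^1}=1$ and $|A_n|=1/n$. For each fixed $R$ the tail measures $|f_n-T_R(f_n)|\,dx=(n-R)\mathds{1}_{A_n}\,dx$ equidistribute and converge weakly-$*$ to Lebesgue measure on $(0,1)$; hence $\rho_R$ is Lebesgue measure for every $R$, and $\rho_\infty$ is absolutely continuous with total mass $1$. Mass carried by sets of vanishing measure can perfectly well diffuse into an absolutely continuous limit, so the Lebesgue-differentiation heuristic in your last sentence does not apply (the sets $\{|f_n|>R\}$ move with $n$). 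In this example your recipe gives $Z=\emptyset$, hence negligible sets $E_k$, while $\rho_{R_k}(\Omega\setminus E_k)\approx1$ does not tend to $0$; the restricted sequence is not uniformly integrable on the complement of any fixed set of positive measure, and the scheme collapses.

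What the correct proofs do differently is to let the exceptional sets chase the concentration along the chosen subsequence instead of being neighbourhoods of a fixed null set. In the example above one takes $n_j=2^j$ and $E_k=\bigcup_{j\ge k}A_{n_j}$, so $|E_k|\le2^{1-k}$ and $f_{n_j}\equiv0$ on $\Omega\setminus E_k$ for $j\ge k$; the biting limit is $0$ (the barycenter of $\nu_x=\delta_0$), not a density of $\rho_\infty$. In general one considers the concentration functional $\ell=\lim_{t\to\infty}\limsup_{n}\int_\Omega(|f_n|-t)^+\,dx$, chooses $t_j\uparrow\infty$ and a subsequence $(n_j)$ so that the sets $\{|f_{n_j}|>t_j\}$ capture this concentration up to an error $1/j$, and sets $E_k=\bigcup_{j\ge k}\{|f_{n_j}|>t_j\}$; Chebyshev gives $|E_k|\to0$, and a short computation with $(|f_{n_j}|-t)^+$ shows that $\{f_{n_j}\mathds{1}_{\Omega\setminus E_k}\}_j$ is uniformly integrable, after which your steps two and three apply verbatim. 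This is the Brooks--Chacon/Ball--Murat argument, and the Young-measure proof in the reference the paper actually cites proceeds in the same spirit; your first step needs to be replaced by a construction of this type.
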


The consequence of the above result is the following, cf.~\cite[Lemma~6.9]{pedr}.

\begin{theo}\label{theo:bitinglemma}Let $f_n\in   L^1(\Omega)$ for every $n\in\N$,   $f_n(x)\geq 0$ for every $n\in\N$ and a.e. $x$ in $\Omega$. Moreover, suppose $f_n\xrightarrow[]{b}f$ and $\limsup_{n\to\infty}\int_\Omega f_n dx\leq \int_\Omega f dx.$ Then  $f_n\xrightharpoonup{}f$ in $L^1(\Omega)$ for $n\to\infty$.
\end{theo}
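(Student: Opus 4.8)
The plan is to upgrade the biting convergence $f_n\xrightarrow{b}f$ to genuine weak convergence in $L^1(\Omega)$ by verifying directly that $\int_\Omega f_n g\,dx\to\int_\Omega fg\,dx$ for every $g\in L^\infty(\Omega)$; since $\Omega$ has finite measure and $(L^1(\Omega))^*=L^\infty(\Omega)$, this is exactly the assertion $f_n\rightharpoonup f$ in $L^1(\Omega)$. First I fix, by Definition~\ref{def:convbiting}, a sequence of measurable sets $E_k\subset\Omega$ with $|E_k|\to 0$ and $f_n\to f$ in $L^1(\Omega\setminus E_k)$ for each $k$ (note that passing to a subsequence shows $f\ge 0$ a.e.\ as well, since each $f_n\ge 0$). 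The role of the sets $\Omega\setminus E_k$ is where biting convergence already gives everything; the whole issue is to control the "bitten-out" mass of $f_n$ on $E_k$, and that is exactly where the hypothesis $\limsup_n\int_\Omega f_n\,dx\le\int_\Omega f\,dx$ is used.

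The key step is the mass estimate on $E_k$. Writing $\int_{E_k}f_n\,dx=\int_\Omega f_n\,dx-\int_{\Omega\setminus E_k}f_n\,dx$ and using that $\int_{\Omega\setminus E_k}f_n\,dx\to\int_{\Omega\setminus E_k}f\,dx$ together with the hypothesis on the total mass, I obtain
\[
\limsup_{n\to\infty}\int_{E_k}f_n\,dx\le\int_\Omega f\,dx-\int_{\Omega\setminus E_k}f\,dx=\int_{E_k}f\,dx .
\]
Since $f\in L^1(\Omega)$ and $|E_k|\to 0$, absolute continuity of the integral gives $\int_{E_k}f\,dx\to 0$ as $k\to\infty$, so the mass of $f_n$ on $E_k$ is asymptotically negligible, uniformly for large $n$.

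Now, for $g\in L^\infty(\Omega)$ I split
\[
\int_\Omega f_n g\,dx-\int_\Omega f g\,dx=\int_{\Omega\setminus E_k}(f_n-f)g\,dx+\int_{E_k}f_n g\,dx-\int_{E_k}f g\,dx .
\]
For fixed $k$ the first term tends to $0$ as $n\to\infty$ by $L^1$-convergence on $\Omega\setminus E_k$; the two remaining terms are bounded in absolute value, using $f_n\ge 0$, by $\|g\|_{L^\infty}\bigl(\int_{E_k}f_n\,dx+\int_{E_k}f\,dx\bigr)$. Combining with the previous paragraph, $\limsup_{n\to\infty}\bigl|\int_\Omega f_n g\,dx-\int_\Omega f g\,dx\bigr|\le 2\|g\|_{L^\infty}\int_{E_k}f\,dx$ for every $k$; letting $k\to\infty$ the right-hand side vanishes, so $\int_\Omega f_n g\,dx\to\int_\Omega f g\,dx$. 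As $g\in L^\infty(\Omega)$ was arbitrary, this proves $f_n\rightharpoonup f$ in $L^1(\Omega)$.

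The only genuinely delicate point is the mass-control estimate on the $E_k$: it is where the global $\limsup$ hypothesis and the nonnegativity of $f_n$ both enter (nonnegativity is used to dominate $|\int_{E_k}f_ng|$ by $\|g\|_{L^\infty}\int_{E_k}f_n\,dx$), and without one of these ingredients biting convergence alone cannot prevent mass from concentrating on the vanishing sets $E_k$. Equivalently, the same estimate shows that $\{f_n\}$ is uniformly integrable and bounded in $L^1(\Omega)$, hence relatively weakly compact there by the Dunford--Pettis theorem, and any weak limit point coincides with $f$ on each $\Omega\setminus E_k$, thus a.e.\ on $\Omega$; this is the viewpoint of~\cite[Lemma~6.9]{pedr} and yields the same conclusion.
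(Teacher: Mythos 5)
Your proof is correct. Note, however, that the paper does not actually prove this statement: it states it as a known consequence of the Chacon Biting Lemma and refers to Pedregal's Lemma~6.9, so there is no in-paper argument to match yours against. What you supply is a self-contained elementary proof, and the key estimate is exactly right: writing $\int_{E_k}f_n\,dx=\int_\Omega f_n\,dx-\int_{\Omega\setminus E_k}f_n\,dx$, using the convergence $\int_{\Omega\setminus E_k}f_n\,dx\to\int_{\Omega\setminus E_k}f\,dx$ and the hypothesis $\limsup_n\int_\Omega f_n\,dx\le\int_\Omega f\,dx$ gives $\limsup_n\int_{E_k}f_n\,dx\le\int_{E_k}f\,dx$, and absolute continuity of the integral of $f$ kills this as $k\to\infty$; nonnegativity of $f_n$ is indeed indispensable here, since it converts the mass bound into a bound on $\bigl|\int_{E_k}f_ng\,dx\bigr|$. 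The splitting against an arbitrary $g\in L^\infty(\Omega)$ then yields weak $L^1$ convergence directly, using $(L^1(\Omega))^*=L^\infty(\Omega)$ on the finite measure space $\Omega$. The route sketched in the reference the paper cites is the one you mention at the end: deduce uniform integrability plus $L^1$-boundedness from the same mass estimate, invoke Dunford--Pettis to get relative weak compactness, and identify the weak limit with $f$ via the biting sets. Your direct computation buys a shorter, compactness-free argument; the Dunford--Pettis viewpoint buys the additional structural information (equi-integrability of $\{f_n\}$) as a byproduct. Either way the statement is established, and your handling of the only delicate points (the $\limsup$ of a difference, $f\ge 0$ a.e.\ via $|\bigcap_k E_k|=0$) is sound.
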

\begin{lem}[The Young Inequality for convolutions]\label{lem:NYS}
Suppose $q,r,s\geq 1$, $1/q+1/r+1/s=2$, and $u\in L^q,$ $v\in L^r$, $\psi\in L^s$. Then
\[\left|\int_\rn \psi(x)\cdot(u*v)(x)\,dx\right|\leq \|u\|_{L^q}\|v\|_{L^r}\|\psi\|_{L^s}.\]
\end{lem}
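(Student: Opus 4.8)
The plan is to reduce the claim to a single application of H\"older's inequality for three functions, after a symmetric splitting of the integrand. First I would note that by the pointwise bound $|\psi(x)\cdot(u*v)(x)|\leq|\psi(x)|\,(|u|*|v|)(x)$ it suffices to treat nonnegative scalar $u,v,\psi$, and I would dispose of the degenerate cases: since $1/q+1/r+1/s=2$ with all exponents at least $1$, if one of them is infinite then the remaining two must both equal $1$ (e.g.\ $s=\infty$ forces $q=r=1$), and in that case the bound is immediate from $\|u*v\|_{L^1}\leq\|u\|_{L^1}\|v\|_{L^1}$ and the pairing $L^1$--$L^\infty$. Hence I may assume $q,r,s\in[1,\infty)$ and introduce the conjugate exponents $q',r',s'$; the hypothesis then becomes $1/q'+1/r'+1/s'=3-2=1$.

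Next I would write, using Fubini,
\[
\int_{\rn}\psi(x)\,(u*v)(x)\,dx=\int_{\rn}\int_{\rn}\psi(x)\,u(x-y)\,v(y)\,dy\,dx,
\]
and use the factorization
\[
\psi(x)\,u(x-y)\,v(y)=\big(u(x-y)^{q}v(y)^{r}\big)^{1/s'}\big(\psi(x)^{s}u(x-y)^{q}\big)^{1/r'}\big(\psi(x)^{s}v(y)^{r}\big)^{1/q'},
\]
which is verified by adding exponents: the total power of $u(x-y)$ is $q(1/s'+1/r')=q(1-1/q')=1$, and symmetrically the powers of $v(y)$ and of $\psi(x)$ are each $1$. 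Applying H\"older's inequality on $\rn\times\rn$ with exponents $s',r',q'$ bounds the double integral by the product of $\big(\iint u(x-y)^{q}v(y)^{r}\,dy\,dx\big)^{1/s'}$, $\big(\iint \psi(x)^{s}u(x-y)^{q}\,dy\,dx\big)^{1/r'}$ and $\big(\iint \psi(x)^{s}v(y)^{r}\,dy\,dx\big)^{1/q'}$. Each of these factorizes by Fubini and translation invariance: the first equals $\|u\|_{L^q}^{q}\|v\|_{L^r}^{r}$, the second $\|\psi\|_{L^s}^{s}\|u\|_{L^q}^{q}$, the third $\|\psi\|_{L^s}^{s}\|v\|_{L^r}^{r}$. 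Collecting powers, the exponent of $\|u\|_{L^q}$ is $q/s'+q/r'=1$, that of $\|v\|_{L^r}$ is $r/s'+r/q'=1$, and that of $\|\psi\|_{L^s}$ is $s/r'+s/q'=1$, so the product is exactly $\|u\|_{L^q}\|v\|_{L^r}\|\psi\|_{L^s}$.

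I do not expect a genuine obstacle here; the only care needed is the exponent bookkeeping in the factorization and the justification of Fubini, which is legitimate because the final estimate shows the triple integral of the absolute value is finite. As an alternative route giving the same conclusion, one can first establish the classical Young convolution inequality $\|u*v\|_{L^{s'}}\leq\|u\|_{L^q}\|v\|_{L^r}$ (admissible since $1/q+1/r=1+1/s'$) and then apply H\"older to the pair $(\psi,u*v)\in L^s\times L^{s'}$; the direct three-function argument above is preferable only in that it is self-contained.
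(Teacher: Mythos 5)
Your argument is correct: the exponent bookkeeping in the three-factor splitting checks out (each of $u$, $v$, $\psi$ ends up with total power $1$ because $1/q'+1/r'+1/s'=1$), the three Fubini/translation-invariance evaluations are right, and you correctly isolate the degenerate case where one exponent is infinite; the only residual triviality is the case where one of $q,r,s$ equals $1$, where a conjugate exponent becomes $\infty$ and the corresponding factor degenerates, but the estimate there reduces to two-function H\"older and causes no problem. Note, however, that the paper does not prove this lemma at all: it is stated in the appendix of basics as a classical fact (the trilinear form of Young's convolution inequality), used only to justify a convergence step in the proof of Lemma~\ref{lem:intbyparts}, so there is no in-paper argument to compare with. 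Your direct three-function H\"older proof is the standard self-contained route, and, as you observe, it is equivalent to the shorter derivation from the usual Young inequality $\|u*v\|_{L^{s'}}\leq\|u\|_{L^q}\|v\|_{L^r}$ followed by the $L^s$--$L^{s'}$ duality pairing; either would be an acceptable justification of the lemma as the paper uses it.
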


\begin{theo}[The Vitali Convergence Theorem]\label{theo:VitConv} Let $(X,\mu)$ be a positive measure space, $\mu(X)<\infty $, and $1\leq p<\infty$. If $\{f_{n}\}$ is uniformly integrable in $L^p_\mu$,   $f_{n}(x)\to f(x)$ in measure  and $|f(x)|<\infty $  a.e. in $X$, then  $f\in  {L}^p_\mu(X)$
and  $f_{n}(x)\to f(x)$ in  ${L}^p_\mu(X)$.
\end{theo}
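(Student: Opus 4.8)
The plan is to run the classical three–step argument for the Vitali Convergence Theorem on the finite measure space $(X,\mu)$: first upgrade uniform $p$-integrability to a uniform $L^p$-bound, then identify $f$ as an element of $L^p_\mu(X)$, and finally obtain norm convergence by splitting $X$ into the set where $f_n$ is already close to $f$ and its (necessarily small) complement. \textbf{Step 1 (uniform bound).} Uniform integrability of $\{f_n\}$ in $L^p_\mu$ means $\lim_{R\to\infty}\sup_n\int_{\{|f_n|\ge R\}}|f_n|^p\,d\mu=0$; fixing $R_0$ with $\sup_n\int_{\{|f_n|\ge R_0\}}|f_n|^p\,d\mu\le 1$ and estimating $\int_X|f_n|^p\,d\mu\le R_0^p\,\mu(X)+1$ (this is the first use of $\mu(X)<\infty$) gives $C:=\sup_n\|f_n\|_{L^p_\mu}^p<\infty$.

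\textbf{Step 2 ($f\in L^p_\mu(X)$).} Since $f_n\to f$ in measure, there is a subsequence $f_{n_k}\to f$ $\mu$-a.e.; Fatou's lemma then yields $\int_X|f|^p\,d\mu\le\liminf_k\int_X|f_{n_k}|^p\,d\mu\le C<\infty$, so $f\in L^p_\mu(X)$, the hypothesis $|f|<\infty$ a.e.\ only being needed to make $|f|^p$ meaningful pointwise a.e. In particular the set function $E\mapsto\int_E|f|^p\,d\mu$ is absolutely continuous with respect to $\mu$.

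\textbf{Step 3 (norm convergence).} Fix $\varepsilon>0$. Using uniform integrability of $\{|f_n|^p\}$ together with Step 2 and $\mu(X)<\infty$, choose $\delta>0$ such that $\mu(E)<\delta$ forces both $\sup_n\int_E|f_n|^p\,d\mu<\varepsilon$ and $\int_E|f|^p\,d\mu<\varepsilon$. For $\eta>0$ put $A_n(\eta)=\{x\in X:\ |f_n(x)-f(x)|\ge\eta\}$; convergence in measure gives $\mu(A_n(\eta))\to 0$, hence $\mu(A_n(\eta))<\delta$ for $n\ge N(\eta)$. Using $|a-b|^p\le 2^{p-1}(|a|^p+|b|^p)$ on $A_n(\eta)$ and $|f_n-f|<\eta$ on its complement,
\[
\int_X|f_n-f|^p\,d\mu\le 2^{p-1}\int_{A_n(\eta)}\big(|f_n|^p+|f|^p\big)\,d\mu+\eta^p\,\mu(X)\le 2^p\varepsilon+\eta^p\,\mu(X)
\]
for all $n\ge N(\eta)$. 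Therefore $\limsup_n\int_X|f_n-f|^p\,d\mu\le 2^p\varepsilon+\eta^p\,\mu(X)$, and letting $\varepsilon\to 0$ and then $\eta\to 0$ gives $f_n\to f$ in $L^p_\mu(X)$.

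\textbf{Main obstacle.} The argument is essentially routine; the one point requiring care is that the hypothesis supplies convergence in measure rather than $\mu$-a.e.\ convergence, so Egorov's theorem is not directly applicable. This is precisely why Step 3 is phrased through the level sets $A_n(\eta)$ and their measures, which is all that convergence in measure guarantees. The finiteness $\mu(X)<\infty$ is invoked twice — to convert uniform $p$-integrability into the uniform $L^p$-bound of Step 1, and to control the ``good-set'' contribution $\eta^p\mu(X)$ in Step 3.
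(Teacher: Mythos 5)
Your proof is correct and is the standard textbook argument; the paper only states Theorem~\ref{theo:VitConv} as a classical background tool without giving a proof, so there is no in-paper argument for it to diverge from. The single point you leave implicit is the routine conversion (valid since $\mu(X)<\infty$) of the tail-form uniform integrability $\lim_{R\to\infty}\sup_n\int_{\{|f_n|\ge R\}}|f_n|^p\,d\mu=0$ into the small-set form ``$\mu(E)<\delta$ implies $\sup_n\int_E|f_n|^p\,d\mu<\varepsilon$'' used in Step 3, which follows by splitting $E$ into $E\cap\{|f_n|<R\}$ and $E\cap\{|f_n|\ge R\}$ and choosing $\delta=\varepsilon/(2R^p)$ for a suitable $R$.
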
 




\section{Approximation}\label{ssec:Appr} 

Let $\kappa_\delta=1- {\delta}/{R}$. For a measurable function $\xi:[0,T]\times\rn\to\rn$ with $\mathrm{supp}\,\xi\subset[0,T]\times\Omega$, we define 
\begin{equation}
\label{Sdxi}S_\delta(\xi(t,x)) =
 \int_\Omega \rho_\delta( x-y)\xi (t,\kappa_\delta y)dy,
\end{equation} 
where $ \rho _\delta(x)=\rho (x/\delta)/\delta^N$ is a standard regularizing kernel on $\rn$  (i.e. $\rho \in C^\infty(\rn)$,
$\mathrm{supp}\,\rho \subset\subset B(0, 1)$ and $\iO \rho (x)dx = 1$, $\rho (x) = \rho (-x)$). Let us notice that $\xi_\delta\in C_c^\infty(\rn;\rn)$.

\begin{lem}\label{lem:step2prev} Suppose $M$ is an $N$-function satisfying condition~(M) and $\Omega$ is a star-shape domain with respect to a ball $B(0,R)$ for some $R>0$. Let $S_\delta$ be given by~\eqref{Sdxi} and $\delta<\delta_0$. Then there exist a constant $C>0$ independent of $\delta$ such that
\begin{equation} 
\label{in:Md<M}
\int_\OT M(x,S_\delta \xi(t,x))\, dx\,dt\leq C
\int_\OT M(x, \xi(t,x))\, dx\,dt
\end{equation}
for every $\xi\in L_M(\OT;\rn)\cap L^1(\OT)$.
\end{lem}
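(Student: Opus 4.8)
The plan is to prove the estimate for almost every fixed $t\in(0,T)$ and then integrate in time, using Fubini; this reduces everything to a spatial statement. So I would fix $t$, abbreviate $\xi=\xi(t,\cdot)$ and $S_\delta\xi(x)=\iO\rho_\delta(x-y)\xi(\kappa_\delta y)\,dy$, and first extract from the star-shapedness the facts that will be used: since $\Omega$ is star-shaped with respect to $B(0,R)$ and $\kappa_\delta=1-\delta/R$, one has $\kappa_\delta\Omega+B(0,\delta)\subset\Omega$, so $S_\delta\xi$ is well defined and smooth, and — after passing to a suitably adapted cube covering and, if necessary, shrinking $\delta_0$ — for every $x\in\Qd$ all points $\kappa_\delta y$ occurring in the integral defining $S_\delta\xi(x)$ (those with $|x-y|\le\delta$) lie in $\tQd\cap\Omega$. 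Hence, by~\eqref{Mjd} and Remark~\ref{rem:2ndconj},
\[
\Msd(\xi(\kappa_\delta y))\le\Mjd(\xi(\kappa_\delta y))\le M(\kappa_\delta y,\xi(\kappa_\delta y))
\]
for all such $y$.

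Next I would work on a single cube $\Qd$. For $x\in\Qd$, Jensen's inequality for the convex function $\Msd$ with the sub-probability measure $\rho_\delta(x-y)\,dy$ (recall $\Msd(0)=0$), combined with the previous inequality, gives
\[
\Msd(S_\delta\xi(x))\le\iO\rho_\delta(x-y)\,M(\kappa_\delta y,\xi(\kappa_\delta y))\,dy=:G_j(x),
\]
and then condition~(M), i.e.~\eqref{M2} evaluated at the vector $S_\delta\xi(x)$, yields
\[
M(x,S_\delta\xi(x))\le c\bigl(1+|S_\delta\xi(x)|^{\varepsilon_\delta}\bigr)\Msd(S_\delta\xi(x))\le c\bigl(1+|S_\delta\xi(x)|^{\varepsilon_\delta}\bigr)G_j(x),\qquad \varepsilon_\delta:=-\frac{a}{\log(3\delta\sqrt N)}.
\]
The contribution of the ``$1$'' is harmless: summing over $j$ (the enlarged cubes $\tQd$ overlap with multiplicity at most $3^N$), using Fubini, the substitution $z=\kappa_\delta y$, $\iO\rho_\delta(x-y)\,dx\le1$ and $\kappa_\delta^{-N}\le(1-\delta_0/R)^{-N}$, one obtains $\sum_j\int_{\Qd\cap\Omega}G_j(x)\,dx\le C\iO M(z,\xi(z))\,dz$.

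The real work — and the step I expect to be the main obstacle — is the correction term $|S_\delta\xi(x)|^{\varepsilon_\delta}G_j(x)$, and this is the only place where the $L^1$-hypothesis on $\xi$ is needed. Since $\rho_\delta$ is supported in $B(0,\delta)$ with $\|\rho_\delta\|_{L^\infty}=\delta^{-N}\|\rho\|_{L^\infty}$, one has $|S_\delta\xi(x)|\le C\delta^{-N}\kappa_\delta^{-N}\|\xi\|_{L^1(\tQd\cap\Omega)}$ for $x\in\Qd$, and, trading the $L^1$-norm for a modular quantity through the Fenchel--Young inequality with the $\Delta_2$-minorant $\dm$ of Lemma~\ref{lem:constr} ($|z|\le\dm(|z|)+\dm^*(1)\le M(\cdot,z)+C$), one gets $|S_\delta\xi(x)|\le C+C\delta^{-N}u_j$ with $u_j:=\int_{\tQd\cap\Omega}M(z,\xi(z))\,dz$. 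The reason the exponent in~\eqref{M2} is taken on a logarithmic scale is precisely that
\[
\delta^{-N\varepsilon_\delta}=\exp\!\Bigl(\frac{aN\log(1/\delta)}{\log(1/\delta)-\log(3\sqrt N)}\Bigr)
\]
stays bounded by a constant $C(N,a,\delta_0)$ for all $\delta<\delta_0$, so that $\delta^{-N}$ becomes innocuous once raised to the power $\varepsilon_\delta$. I would then split $\Qd$ into $\{|S_\delta\xi(x)|\le1\}$, where $|S_\delta\xi(x)|^{\varepsilon_\delta}\le1$, and its complement, where $|S_\delta\xi(x)|^{\varepsilon_\delta}\le C\bigl(1+u_j^{\varepsilon_\delta}\bigr)$ by the above together with $\varepsilon_\delta\le\varepsilon_{\delta_0}\le1$ (after shrinking $\delta_0$) and the subadditivity of $s\mapsto s^{\varepsilon_\delta}$; using $\int_{\Qd\cap\Omega}G_j\le C\kappa_\delta^{-N}u_j$ and summing over $j$ with $\sum_j u_j\le 3^N\iO M(z,\xi)\,dz$ absorbs the correction and leaves $\iO M(x,S_\delta\xi(x))\,dx\le C\iO M(x,\xi(x))\,dx$ with $C$ depending only on $N$, $\Omega$ and the constants $a,c,\delta_0$ of~(M) (and, through $\dm$, on $M$). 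Integrating over $t\in(0,T)$ completes the proof. The delicate point to keep in view throughout is that $C$ must not degenerate as $\delta\to0$; this is exactly what the boundedness of $\delta^{-N\varepsilon_\delta}$ guarantees, and it is the sole role of the log-Hölder-type scaling in condition~(M).
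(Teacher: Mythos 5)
Your overall architecture (Jensen for the convex minorant $\Msd$ plus condition (M) to pass back to $M(x,\cdot)$, with the log-Hölder scaling neutralizing $\delta^{-N\varepsilon_\delta}$, $\varepsilon_\delta=-a/\log(3\delta\sqrt N)$) is the same as the paper's, but the step you yourself identify as "the real work" — absorbing the correction term — does not close. From your own estimates, on the part of $\Qd$ where $|S_\delta\xi|>1$ you get $|S_\delta\xi(x)|^{\varepsilon_\delta}\le C(1+u_j^{\varepsilon_\delta})$ with $u_j=\int_{\tQd\cap\Omega}M(z,\xi(t,z))\,dz$, so the correction contributes $\sum_j C(1+u_j^{\varepsilon_\delta})u_j\le C\sum_j u_j+C\sum_j u_j^{1+\varepsilon_\delta}$. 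The second sum is superlinear in the modular and cannot be bounded by $C\sum_j u_j$ with $C$ depending only on $N,\Omega,a,c,\delta_0$ and $M$: a single cube with $u_j=U$ large already gives $u_j^{1+\varepsilon_\delta}=U^{\varepsilon_\delta}\,U$, and $U^{\varepsilon_\delta}$ is unbounded over admissible $\xi$ for any fixed $\delta<\delta_0$ (the boundedness of $\delta^{-N\varepsilon_\delta}$, which you correctly isolate, does nothing to $U^{\varepsilon_\delta}$). What your argument actually yields is the slice-wise bound $\iO M(x,S_\delta\xi(t,x))\,dx\le C\bigl(\iO M(x,\xi(t,x))\,dx+(\iO M(x,\xi(t,x))\,dx)^{1+\varepsilon_\delta}\bigr)$, which is not of the claimed form and, after integration in $t$, is not comparable to $C\iOT M(x,\xi)\,dx\,dt$ with a $\xi$-independent constant. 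The paper takes a cruder route at exactly this point: it assumes (as a "without loss of generality") $\|\xi\|_{L^\infty(0,T;L^1(\Omega))}\le1$, whence $|S_\delta\xi(t,x)|\le 2K\delta^{-N}$ pointwise with an absolute constant, so that \eqref{M2} gives the uniform ratio bound $M(x,S_\delta\xi)\le C\,\Msd(S_\delta\xi)$ before Jensen is ever invoked and no correction term arises. Your Fenchel--Young replacement of that normalization by the local modular $u_j$ is precisely what reintroduces the $\xi$-dependence; to recover the stated lemma you must either impose the same normalization (after which your per-cube detour collapses to the paper's computation) or accept a constant depending on $\|\xi\|_{L^\infty(0,T;L^1(\Omega))}$.

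A secondary point: the inclusion $\kappa_\delta y\in\tQd\cap\Omega$ for $x\in\Qd$, $|x-y|\le\delta$, is not rescued by "shrinking $\delta_0$". One does have $y\in\tQd$, but $|y-\kappa_\delta y|=(\delta/R)|y|\le\delta\,\mathrm{diam}(\Omega)/R$, a displacement of order $\delta$ with a factor that typically exceeds $1$, so $\kappa_\delta y$ can leave $\tQd$ no matter how small $\delta_0$ is. What is needed is a covering whose enlarged cubes have edge $c\delta$ with $c$ depending on $\mathrm{diam}(\Omega)/R$, together with the corresponding version of \eqref{M2} for those cubes (in the isotropic case this again follows from \eqref{M2'}). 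This is fixable — and the paper's own proof, which keeps the indicator at the unshrunk point $x-y$ in the Jensen step and only meets the shrinking map in its final change of variables, glosses over a similar point — but in your argument the inequality $\Msd(\xi(\kappa_\delta y))\le M(\kappa_\delta y,\xi(\kappa_\delta y))$ is used from the very first display, so the adapted covering has to be set up explicitly rather than attributed to a smaller $\delta_0$.
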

\begin{proof}  For $0 < \delta < R$ it holds that
\[\overline{\left(1-\frac{\delta}{R}\right) \Omega + \delta B(0, 1)} \subset \Omega.\]
Therefore, $S_\delta\xi\in L^\infty (0,T; C_c^\infty(\Omega))$. Let $0<\delta<\delta_1:=\min\{R/{4},\delta_0\}$ and $\{\Qd\}_{j=1}^{N_\delta}$ be a family defined in~(M). We consider $M_j^\delta(\xi)$ given by~\eqref{Mjd} and $\Mss$, see~Remark~\ref{rem:2ndconj}. Since $M(x,\xi_\delta(x))=0$ whenever $\xi_\delta(x)=0$, we have
\begin{equation}
\label{M:div-mult}\begin{split}
\int_0^T\iO M(x,S_\delta(\xi(t,x)))dxdt=\sum_{j=1}^{N_\delta}\int_0^T \iQd M(x,S_\delta(\xi(t,x)))dxdt=\\=\sum_{j=1}^{N_\delta}\int_0^T \iQdn \frac{M(x,S_\delta(\xi(t,x)))}{\Msdx}{\Msdx}dxdt.\end{split}
\end{equation}

Our aim is to show now the following uniform bound
\begin{equation}
\label{M/M<c}\frac{M(x,S_\delta(\xi(t,x)))}{\Msdx}\leq C
\end{equation}
for  sufficiently small $\delta>0$, $x\in\Qd\cap\Omega$ with $c$ independent of $\delta,x$ and $j$. 

 Let us fix an arbitrary cube and take $x\in \Qd$. For sufficiently small $\delta$ (i.e. $\delta< \delta_0$), due to~\eqref{M2}, we obtain \begin{equation}
\label{M/M<xi}\frac{M(x,S_\delta(\xi(t,x)))}{\Msdx} 
\leq   c \left(1+ |S_\delta(\xi(t,x))|^{-\frac{a}{\log(3\delta\sqrt{N})}} \right).
\end{equation}

To estimate the right--hand side of~\eqref{M/M<xi} we consider $S_\delta$ given by~\eqref{Sdxi}. Denote \[K=\sup_{B(0,1)}|\rho (x)|.\]
Note that for any $x,y\in\Omega$   and each $\delta>0$  we have
\[\rho _\delta(x-y)\leq  {K}/{\delta^N}.\]
Without loss of generality it can be assumed that $\|\xi\|_{L^\infty(0,T; L^1(\Omega))}\leq 1$, so
\begin{equation}
\label{xidest}\begin{split}|S_\delta \xi(t,x)|& 
\leq \frac{K}{\delta^N }  \int_{\Omega }  |\xi (t,\kappa_\delta y)|dy \leq \frac{K }{\delta^N\kappa_\delta}\|\xi\|_{L^\infty(0,T; L^1(\Omega))}\leq \frac{2K}{\delta^N }.\end{split}
\end{equation}
Note that \[\left|   {\delta^N} \right|^{ \frac{a}{\log (b\delta )}}=\exp \frac{aN \log  \delta}{\log (b\delta )}\]
is bounded for $\delta\in [0,\delta_0]$. We combine this with~\eqref{M/M<xi} and~\eqref{xidest} to get\begin{equation*}
\frac{M(x,S_\delta(\xi(t,x)))}{ \Msdx } \leq c \left(1+ \left|\frac{2K}{\delta^N }\right|^{-\frac{a}{\log(b\delta )}} \right)\leq C.
\end{equation*}
Thus, we have obtained~\eqref{M/M<c}.

Now, starting from~\eqref{M:div-mult}, noting~\eqref{M/M<c}  and the fact that $\Mss$=0 if and only if $\xi=0$, we observe \[
\begin{split}
\int_0^T\iO M(x,S_\delta\xi(t,x))dx &=\sum_{j=1}^{N_\delta}\int_0^T \iQdn \frac{M(x,S_\delta\xi(t,x))}{\Msdx}{\Msdx}dxdt\leq \\
&\leq C\sum_{j=1}^{N_\delta}\int_0^T \iQdn {\Msdx}dxdt\leq\\
&\leq C\sum_{j=1}^{N_\delta}\int_0^T \iQd \   {{\Msd}\left( \int_{B(0,\delta)} \rho _\delta(y)\xi (t,\kappa_\delta (x-y))dy\right)}\mathds{1}_{\Qd\cap\Omega}(x) dxdt\leq\\
&\leq C\sum_{j=1}^{N_\delta}\int_0^T \int_\rn  \  {{\Msd}\left( \int_{B(0,\delta)} \rho _\delta(y)\xi (t,\kappa_\delta (x-y))\mathds{1}_{\Qd\cap\Omega}(x)dy\right)} dxdt\leq\\
&\leq C\sum_{j=1}^{N_\delta}\int_0^T \int_\rn  {{\Msd}\left( \int_{\rn} \rho _\delta(y)\xi (t,\kappa_\delta (x-y))\mathds{1}_{\tQd\cap\Omega}(x-y)dy\right)} dxdt.\end{split} 
\]
Note  that by applying the Jensen inequality  the right-hand side above can be estimated by the following quantity
\[ \begin{split} 
& \quad \ C\sum_{j=1}^{N_\delta} \int_0^T\int_\rn \int_{\rn} \rho _\delta(y) {{\Msd}\left( \xi (t,\kappa_\delta (x-y))\mathds{1}_{\tQd\cap\Omega}(x-y) \right)} dy\,dxdt\leq\\
& \leq C \| \rho _\delta\|_{L^1({B(0,\delta);\rn})}\sum_{j=1}^{N_\delta}\int_0^T\int_\rn {{\Msd}\left( \xi (t,\kappa_\delta z)\mathds{1}_{\tQd\cap\Omega}(z) \right)}  dzdt\leq\\
&\leq C  \sum_{j=1}^{N_\delta} \int_0^T \int_{\tQd\cap\Omega} {{\Msd}\left( \xi (t,\kappa_\delta z) \right)} dzdt.\end{split}
\] 
We applied inequality for convolution, boundedness of $\rho _\delta$, once again  the fact that $\Mss$=0 if~and only if~$\xi=0$. Then, by the definition of  $M_j^\delta(\xi)$~\eqref{Mjd} and properties of $\Mss$, see~Remark~\ref{rem:2ndconj}, we realize that
\[ \begin{split} C \sum_{j=1}^{N_\delta}\int_0^T  \int_{\tQd\cap\Omega} {{\Msd}\left( \xi (t,\kappa_\delta z) \right)} dzdt&\leq C\sum_{j=1}^{N_\delta} \int_0^T \int_{{\kappa_\delta}\Qd} {M\left(x, \xi (t,x) \right)}  dxdt  \leq\\
&\leq C\sum_{j=1}^{N_\delta} \int_0^T \int_{{2}\Qd} {M\left(x, \xi (t,x) \right)}  dxdt\leq  C(N)\int_0^T\int_\Omega {M\left(x, \xi (t,x) \right)} dxdt.\end{split}
\]
The last inequality above stands for computation of~a~sum taking into account the measure of~repeating parts of cubes.

We get~\eqref{in:Md<M} by summing up the above estimates.\end{proof}

\begin{proof}[Proof of Theorem~\ref{theo:approx}] If $\Omega$ is a bounded Lipschitz domain in~$\rn$, then there exists a finite family of open sets
$\{\Omega_i\}_{i\in I}$ and a finite family of balls $\{ B^i\}_{i\in I}$ such that 
$$\Omega=\bigcup\limits_{i\in I}\Omega_i$$
and every set $\Omega_i$ is star-shaped with respect to ball $B^i$ of radius $R_i$ (see e.g.~\cite{Novotny}). 
Let us  introduce the partition of unity $\theta_i$ with
$0\le\theta_i\le 1,$ $\theta_i\in C^\infty_0(\Omega_i), $ ${\rm supp} \,\theta_i=\Omega_i,$ $\sum_{i\in I}\theta_i( x)=1$
 for $x\in\Omega$. We define $Q_i:=(0,T)\times \Omega_i$.

\medskip
 
 Fix arbitrary $\vp \in \VTMi$. We will show that there exists a constant $\lambda>0$ such that for each $\ve$, there exist $\vp_\ve\in L^\infty(0,T; C_c^\infty(\Omega))$, such that  \[\iOT M\left(x,\frac{\nabla \vp_\ve-\nabla \vp}{\lambda}\right)dx\,dt<\ve.\] 
We contruct $\vp_\ve$ by analysis of $S_\delta(T_l\vp)$, where $S_\delta$ is defined in~\eqref{Sdxi} and truncation $T$ is given by~\eqref{Tk}. Namely,  we are going to show that there exists a constant $\lambda>0$ such that
\[\lim_{l\to\infty}\lim_{\delta\to 0^+}\iOT M\left(x, \frac{\nabla S_\delta(  T_l(\vp))- \nabla \vp}{\lambda}\right) dx dt = 0.\]

Since\[ \iOT M\left(x, \frac{\nabla S_\delta(  T_l(\vp))- \nabla \vp}{\lambda}\right) dx dt \leq\sum_{i\in I} \int_{Q_i} M\left(x, \frac{\nabla S_\delta(  T_l(\vp))- \nabla \vp}{\lambda}\right) dx dt \]
it suffices to prove it to prove convergence to zero of each integral from the right-hand side.

Let us consider a family of measurable sets  $\{ E_n \}_n$  such that $\bigcup_n E_n = \OT$ and a simple vector valued function 
\[E^n(t,x)=\sum_{j=0}^n \mathds{1}_{E_j}(t,x) \va_{j}(t,x),\]
converging modularly to $\nabla(T_l \vp )$ with $\lambda_3$ (cf.~Definition~\ref{def:convmod}) which exists due to Lemma~\ref{lem:dens}.  

Note that 
\[  \nabla S_\delta(  T_l \vp)- \nabla \vp=\left(\sum_{i\in I}\nabla S_\delta(\theta_i  T_l \vp)-S_\delta E^n\right) +(S_\delta E^n -E^n)  
	+ (E^n - \nabla(T_l \vp))+ (\nabla(T_l \vp)- \nabla  \vp) .\]

Convexity of $M(x, \cdot)$ implies 
	\begin{equation*}
	\begin{split}
	&\int_{Q_i} M \left( x, \frac{ \nabla S_\delta(  T_l \vp)- \nabla \vp }{ \lambda }\right) \,dx dt=\\
	& \leq	\frac{\lambda_1}{\lambda} \int_{Q_i}  M\left( x, 
	\frac{\nabla S_\delta(  T_l \vp) -S_\delta E^n}{ \lambda_1} \right) \,dx dt	+ \frac{\lambda_2}{\lambda} \int_{Q_i}  M\left( x, 
	\frac{S_\delta E^n -E^n }{\lambda_2 } \right) \,dx dt\\
	& + \frac{\lambda_3}{\lambda} \int_{Q_i}  M\left( x,  \frac{ E^n - \nabla(T_l \vp)}{\lambda_3} \right) \,dx dt+ \frac{\lambda_4}{\lambda} \int_{Q_i}  M\left( x,  \frac{ \nabla(T_l \vp)- \nabla  \vp }{\lambda_4} \right) \,dx dt=\\
	&=L^{l,n,\delta}_1+L^{l,n,\delta}_2+L^{l,n,\delta}_3+L^{l,n,\delta}_4,
	\end{split}
	\end{equation*}
where $\lambda= \sum_{i=1}^4\lambda_i$, $\lambda_i>0$.  We have $\lambda_3$ fixed already. Let us take $\lambda_1=\lambda_3$.

\medskip

We note that $T_l \vp \in \VTMi$  and for each $i\in I$ we have
 \[\theta_i\cdot T_l\vp\in L^\infty (Q_i)\cap L^\infty(0,T;L^2(\Omega_i))\cap L^1(0,T;W^{1,1}_0(\Omega_i)) \]
and 
\[\nabla (\theta_i T_l\vp)\ =\ T_l\vp \nabla \theta_i + \theta_i \nabla T_l\vp \ \in\ L_M(\Omega;\rn).\]
Furthermore, $\sum_{i\in I} \nabla (\theta_i T_l\vp)=\nabla (T_l\vp)$.

	 Let us notice that 
\[L_1^{l,n,\delta}=\frac{\lambda_1}{\lambda} \int_{Q_i}  M\left( x,   S_\delta\left( \frac{ E^n  - \sum_{i\in I}\nabla  (\theta_i T_l \vp)  }{ \lambda_1}\right) \right) \,dx dt.\]
Due to Lemma~\ref{lem:step2prev} the family of operators $S_\delta$ is uniformly bounded from $L_M(\Omega;\rn)$ to~$L_M(\Omega;\rn)$ and we can estimate $0\leq L^{l,n,\delta}_1\leq C L^{l,n,\delta}_3.$ Furthermore, Lemma~\ref{lem:dens} implies that $\lim_{n\to\infty} \lim_{\delta\to 0^+}L^{l,n,\delta}_3= 0,$  so  $\lim_{l\to\infty} \lim_{\delta\to 0^+} L^{l,n,\delta}_1= 0$ as well.

 Let us concentrate on $L^{l,n,\delta}_2$.
 The Jensen  inequality and then the Fubini theorem lead to 
	\begin{equation}\label{IE:aw17}
	\begin{split}
	&\frac{\lambda }{\lambda_2} L^{l,n,\delta}_2  = \int_{Q_i}  M\left( x,  \frac{ E^n (t,x)  -S_\delta E^n (t,x) }{\lambda_2} \right) \,dx dt\\
	& = \int_{Q_i} M \left( x, \frac{1}{ \lambda_2} \int_{B(0,\delta)} \varrho_\delta(y) \sum_{j=0}^n [  \mathds{1}_{E_j}(t,x) \va_j (t,x)- \mathds{1}_{E_j}(t,\kappa_\delta(x -  y)) \va_j (t,\kappa_\delta(x -  y)) ]\,dy \right) \,dx\,dt
	\\ &
	\leq 
	\int_{B(0,\delta)} \varrho_\delta(y)  \left( \int_{Q_i} M \left( x, \frac{1}{\lambda_2} \sum_{j=0}^n [   \mathds{1}_{E_n}(t,x) \va_j (t,x) -\mathds{1}_{E_j}(t,\kappa_\delta(x -  y)) \va_j(t,\kappa_\delta(x -  y)) ] \right) \,dx \right) \,dy\,dt.
	\end{split}	\end{equation}
Using the continuity of the shift operator in $L^1$ we observe that poinwisely
	\[  \sum_{j=0}^n [  \mathds{1}_{E_j}(t,x) \va_j (t,x)- \mathds{1}_{E_j}(t,\kappa_\delta(x -  y)) \va_j(t,\kappa_\delta(x -  y))   ] \xrightarrow[]{\dep\to 0} 0.\] 
	Moreover, when we fix arbitrary $\lambda_2>0$ we have 
	\[\begin{split}
	 &M \left( x, \frac{1}{\lambda_2} \sum_{j=0}^n [   \mathds{1}_{E_j}(t,x) \va_j(t,x) - \mathds{1}_{E_j}(t,\kappa_\delta(x -  y)) \va_j (t,\kappa_\delta(x -  y))  ] \right)\\
	 & \leq \sup_{\eta\in\rn:\ | \eta|=1}M \left( x, \frac{1}{ \lambda_2} \sum_{j=0}^n| \va_j|  \eta \right)  <\infty
	 \end{split}\]
and
the Lebesgue Dominated Convergence Theorem provides the right-hand side of \eqref{IE:aw17} converges to zero.

 To prove the convergence of $L^{l,n,\delta}_4$, which is independent of $\delta$ and $n$, we observe that when $l\to\infty$ we have $T_l \vp\to \vp$ strongly in $L^1(0,T; W_0^{1,1}(\Omega))$ and therefore also, up to a subsequence, almost everywhere. Moreover, $M(x,\nabla T_l \vp)\leq M(x,\nabla \vp)$ a.e. in~$\OT$. Consequently, the sequence $\{M(x,\nabla T_l \vp)\}_l$ is uniformly integrable. Taking into account its poinwise convergence, we infer modular convergence $\nabla T_l \vp \xrightarrow[l\to\infty]{M} \nabla \vp$. Thus, there exist a constant $\lambda_4$, such that $\lim_{l\to\infty} L_4^{l,n,\delta}=0$. 

Passing to the limit completes the proof of modular convergence of the approximating sequence from $L^\infty (0,T; C_c^\infty(\Omega))$. The modular convergence of gradients implies their strong $L^1$-convergence. \end{proof}

\section{Weak formulation}\label{ssec:Weak}

\begin{proof}[Proof of Lemma~\ref{lem:intbyparts}]  Let $h\in W^{1,\infty}(\r)$ be such that $\supp (h')$ is compact. Let us note that  $h_1,h_2:\R\to\R$ given by
\[h_1(t)=\int_{-\infty}^t (h')^+(s)\,ds,\ g^+:=\max\{0,g\},\quad h_2(t)=\int_{-\infty}^t (h')^-(s)\,ds,\ g^-:=\min\{0,g\}\]
are   Lipschitz continuous functions. Moreover, $h_1$ is non-decreasing, $h_2$ is non-increasing, and $h=h_1+h_2$. In both cases there exists $k>0$ such that $\supp(h')\subset [-k,k]$, thus $h(u)=h(T_k(u))=h_1(T_k(u))+h_2(T_k(u))$. Furthermore, $h_1(T_k(u)),h_2(T_k(u))\in L^\infty(\OT)$ and $\nabla (h_1(T_k(u))),\nabla (h_2(T_k(u)))\in L_M(\OT;\rn).$ It follows from the existence of modularly converging sequence $\nabla (T_k(u))_\ve$, cf.  Theorem~\ref{theo:approx}, which via Definition~\ref{def:convmod} implies uniform integrability of~\[\left\{M\left(x,\frac{h_1'((T_k (u))_\ve)\nabla (T_k(u))_\ve}{\lambda}\right)\right\}_\ve\] for some $\lambda>0$.

We start with the proof for nonnegative $\xi$, which we extend in the following way \begin{equation}\label{xi:ext}
\xi(t,x)=\left\{\begin{array}{ll}\xi(-t,x),& t<0,\\
\xi(t,x),&t\in[0,T],\\
0,&t>T.
\end{array}\right.\end{equation} Additionally, we extend $u(t,x)=u_0(x)$ for $t<0$.  Let further fix ${\et}>0$ and
\begin{equation}\begin{split}\label{t-zeta-eta}
 \zeta :=&h_1(T_k(u))\xi,\\
 \zeta_{\et}(t,x) := \frac{1}{{\et}}\int_t^{t+{\et}}\zeta(\sigma,x)\,d\sigma,\quad &\quad
 \wt{\zeta}_{\et}(t,x) := \frac{1}{{\et}}\int_{t-{\et}}^t\zeta(\sigma,x)\,d\sigma.\end{split}
\end{equation}
Note that due to the same reasoning as for $h(u)$, also $\zeta_{\et}, \wt{\zeta}_{\et}(t,x):\OT\to\r$ belong to $\VTMi$. Furthermore, $\partial_t \zeta_{\et},\partial_t \wt{\zeta}_{\et}(t,x)\in L^\infty(\OT)$. Then $\zeta_{\et}(T,x)= \wt{\zeta}_{\et}(0,x)=0$ for all $x\in\Omega$ and ${\et}>0$. We can use approximating sequences $((\zeta_{\et})_\varsigma)_\ve, ((\wt{\zeta}_{\et})_\varsigma)_\ve \in C_c^\infty (0,T; C_c^\infty(\Omega))$, where $\varsigma$ stands for mollification with respect to the time variable, and $\varepsilon$ denotes modular approximation from Theorem~\ref{theo:approx}  as  test functions  in~\eqref{eq:lem:int-by-parts-1}.  We get
\begin{equation}\label{1sttestve}
  \iOT A\cdot \nabla  (((\zeta_{\et})_\varsigma)_\ve)  \,dx\,dt-\iOT F  (((\zeta_{\et})_\varsigma)_\ve) \,dx\,dt =\iOT \left(u(t,x)-u_0(x)\right)\partial_t (((\zeta_{\et})_\varsigma)_\ve) \,dx\,dt.
\end{equation} 
  Since a modular convergence entails a weak one the Lebesgue Dominated Convergence Theorem enables to pass to the limit with $\varsigma,\ve\to 0$ on the right--hand side. On the left--hand side the properities of the regularising kernel together with Lemma~\ref{lem:NYS} ensures the convergence. In turn, we obtain
\begin{equation}\label{1sttest}
\begin{split}
\iOT A\cdot \nabla \zeta_{\et} \,dx\,dt-\iOT F \zeta_{\et} \,dx\,dt&=\iOT \left(u(t,x)-u_0(x)\right)\frac{1}{{\et}}\left(\zeta(t+{\et},x)-\zeta(t,x)\right)dxdt=\\
&= \frac{1}{{\et}}\left(J_1+J_2+J_3\right),
\end{split}
\end{equation}
where $\zeta(t,x)=0$ for $t>T$, $\xi$ is extended by~\eqref{xi:ext} $u(t,x)=u_0(x)$ for $t<0$, and
\begin{eqnarray}
J_1&=&\int_0^T\iO \zeta(t+{\et},x)u(t,x)dxdt=\int_{\et}^T\iO \zeta(t,x)u(t-{\et},x)dxdt\nonumber,\\
J_2&=&-\int_0^T\iO \zeta(t,x)u(t,x)dxdt,\label{J2}\\
J_3&=&-\int_0^T\iO \zeta(t+{\et},x)u_0(x)\,dxdt+\int_0^T\iO  \zeta(t,x) u_0(x)\,dxdt=\nonumber \\
&=&-\int_{\et}^{T+{\et}}\iO \zeta(t,x)u_0(x)\,dxdt+\int_0^T\iO  \zeta(t,x) u_0(x)\,dxdt=\nonumber \\
&=&\int_0^{\et}\iO \zeta(t,x)u_0(x)\,dxdt-\int_T^{T+{\et}}\iO  \zeta(t,x) u_0(x)\,dxdt\nonumber=\\
&=&\int_0^{\et} \iO \zeta(t,x)  u(t-{\et},x)dxdt.\label{J3}
\end{eqnarray}
Using~\eqref{J2} and~\eqref{J3} in~\eqref{1sttest} we get
\begin{equation}\label{1sttest-appl}
\begin{split}
 \iOT A\cdot \nabla \zeta_{\et} \,dx\,dt-\iOT F \zeta_{\et} \,dx\,dt&=\iOT \frac{1}{{\et}} \zeta(t,x) \left(u(t-{\et},x)-u(t,x)\right)dxdt.
\end{split}
\end{equation}
Note that for any $s_1,s_2\in\R$ we have
\begin{equation}
\label{h1-conv}
\int_{s_1}^{s_2} h_1(T_k(\s))d\s \geq  {h_1}(T_k(s_1)) (s_2-s_1).
\end{equation}
Then 
\begin{equation*}
\begin{split}
\frac{1}{{\et}} \iOT  \zeta(t,x) \left(u(t-{\et},x)-u(t,x)\right)dxdt\leq \frac{1}{{\et}} \iOT  \xi(t,x) \int_{u(t,x)}^{u(t-{\et},x)} h_1(T_k(\s))d\s\ dx\,dt.
\end{split}
\end{equation*}
Applying it in~\eqref{1sttest-appl}, following the same reasoning as in~\eqref{J3}, we get
\begin{equation}\label{est-integr2}
\begin{split}
& \iOT A\cdot \nabla \zeta_{\et} \,dx\,dt-\iOT F \zeta_{\et} \,dx\,dt\leq  \frac{1}{{\et}} \iOT  \xi(t,x) \left(\int_{u(t,x)}^{u(t-{\et},x)} h_1(T_k(\s))d\s\right) dx\,dt=\\
&= \frac{1}{{\et}} \iOT ( \xi(t+{\et},x)-\xi(t,x))\left(\int_{u(0,x)}^{u(t-{\et},x)} h_1(T_k(\s))d\s\right) dx\,dt .
\end{split}
\end{equation}
Passing to a subsequence if necessary, we have $\zeta_{\et}\xrightharpoonup[]{*}\xi h_1(T_k(u))$ weakly-* in $L^\infty(\OT)$, when ${\et}\searrow 0$. Since $\nabla \zeta_{\et}=[(\nabla \xi) h_1(T_k(u))]_{\et} +[\xi \nabla ( h_1(T_k(u)))]_{\et} $ and $[(\nabla\xi) h_1(T_k(u))]_{\et} \xrightharpoonup[]{*}(\nabla \xi) h_1(T_k(u))$ weakly-* in $L^\infty(\OT;\rn)$, when ${\et}\searrow 0$, by the Jensen inequality $\nabla \zeta_d \xrightarrow[]{M} \nabla (\xi h_1(T_k(u)))$. Moreover,$\zeta_d \xrightharpoonup[]{*} \xi h_1(T_k(u))$ weakly-* in $L^\infty(\OT)$, when ${\et}\searrow 0$. Therefore,  passing to the limit in~\eqref{est-integr2}  implies
\begin{equation}\label{est-integr-lim}
\begin{split}
& \iOT A\cdot \nabla (h_1(T_k(u))\xi) \,dx\,dt-\iOT F (h_1(T_k(u))\xi) \,dx\,dt\leq \iOT \partial_t\xi \int_{u_0}^{u(t,x)} {h_1}(T_k(\s))\,d\s\  dx\,dt.
\end{split}
\end{equation} 

Since $T_k(u_0)\in L^\infty(\Omega),$ there exists a sequence $\{u_{0}^{n}\}_n\subset{C_c^\infty}(\Omega)$ such that $T_k(u_{0}^{n})\to T_k(u_0)$ in~$L^1(\Omega)$ and a.e. in~$\Omega$ as $n\to\infty$. For $t<0$ and all $x\in\Omega$ we put $u(t,x)=u_{0}(x)$. Recall that we consider nonnegative $\xi\in  C_c^\infty([0,T)\times\Omega )$ extended by~\eqref{xi:ext}. Note that the sequence $ \{(\wt{\zeta}_{\et})_\ve\}$ approximating $\wt{\zeta}_{\et}$, given by~\eqref{t-zeta-eta}, can be used as a test function in~\eqref{eq:lem:int-by-parts-1}. Via arguments of~\eqref{1sttestve}, we pass to the limit with   $\ve\to 0$ getting
\begin{equation*}
\begin{split}
 \iOT A\cdot \nabla \wt{\zeta}_{\et} \,dx\,dt-\iOT F \wt{\zeta}_{\et} \,dx\,dt&=\iOT \frac{1}{{\et}}\left(\zeta(t,x)-\zeta(t-{\et},x)\right)\left(u(t,x)-u_0(x)\right)=\\
&= \frac{1}{{\et}}\left(K_1+K_2+K_3\right),
\end{split}
\end{equation*}
where  
\begin{eqnarray}
K_1&=&\int_0^T\iO \zeta(t ,x)u(t,x)dxdt=\int_{\et}^{T+{\et}}\iO \zeta(t-{\et},x)u(t-{\et},x)dxdt\nonumber,\\
K_2&=&-\int_0^T\iO \zeta(t-{\et},x)u(t,x)dxdt,\label{K2}\\
K_3&=&-\int_0^T\iO \zeta(t ,x)u_0(x)\,dxdt+\int_0^T\iO  \zeta(t-{\et},x) u_0(x)\,dxdt=\nonumber \\
&=& \int_{-{\et}}^0\iO \zeta(t,x)u_0(x)\,dxdt =\int_0^{\et} \iO \zeta(t-{\et},x)  u_0(x)dxdt.\label{K3}
\end{eqnarray}
Therefore,~\eqref{K2} and~\eqref{K3} give 
\begin{equation}\label{2ndtest}
\begin{split}
 \iOT A\cdot \nabla \wt{\zeta}_{\et} \,dx\,dt-\iOT F \wt{\zeta}_{\et} \,dx\,dt&= \frac{1}{{\et}}\left(L_1+L_2\right),
\end{split}
\end{equation}
with 
\begin{eqnarray*}
L_1&=&\int_{\et}^T\iO \zeta(t-{\et} ,x)(u(t-{\et},x)-u(t,x))\,dxdt\nonumber,\\
L_2&=&\int_0^{\et}\iO h_1(T_k(u_{0}^{n}))\xi(u(t-{\et},x)-u(t,x))\,dxdt+\nonumber\\
&&+\int_0^{\et}\iO (h_1(T_k(u_{0}))-h_1(T_k(u_{0}^{n})))\xi(u(t-{\et},x)-u(t,x))\,dxdt
\end{eqnarray*} 
for sufficiently small ${\et}$, because $\xi(\cdot,x)$ has a compact support in $[0,T)$ almost everywhere in~$\Omega$. Due to~\eqref{h1-conv}, we have 
\begin{equation}
\label{b-eta}
\begin{split}\int_{u(t-{\et},x)}^{u(t,x)}-(h_1(T_k(\s)))\,d\s&\leq -(u(t,x)-u(t-{\et},x))h_1(T_k(u(t-{\et},x)))\quad \text{ a.e. in }({\et},T)\times\Omega,\\
\int_{u(t-{\et},x)}^{u(t,x)}-(h_1(T_k(\s)))\,d\s  & \leq -(u(t,x)-u_0)h_1(T_k(u_0)) \quad  \text{ a.e. in }(0,{\et})\times\Omega.\end{split}
\end{equation}

Combining~\eqref{2ndtest} and~\eqref{b-eta} we get\begin{equation*}
\begin{split}
& \iOT A\cdot \nabla \wt{\zeta}_{\et} \,dx\,dt- \iOT F\wt{\zeta}_{\et} \,dx\,dt \geq    \frac{1}{{\et}} \iOT  \xi(t,x) \left(\int_{u(t,x)}^{u(t-{\et},x)} h_1(T_k(\s))d\s \right) dx\,dt+ \\
&\qquad +\int_0^{\et} \iO (h_1(T_k(u_{0}))-h_1(T_k(u_{0}^{n})))\xi(u_0-u(t,x)) dx\,dt\\ 
&\quad\geq  \frac{1}{{\et}} \iOT ( \xi(t-{\et},x)-\xi(t,x)) \left(\int_{u(t,x)}^{u(t-{\et},x)} h_1(T_k(\s))d\s \right) dx\,dt+ \\
&\qquad -  \iO |h_1(T_k(u_{0}^{n}))-h_1(T_k(u_{0}))||\xi|(|u_0|+|u(t,x)|) dx\,dt.
\end{split}
\end{equation*}

To pass with ${\et}\searrow 0$ and then $n\to\infty$ on the left-hand side above, as in~\eqref{est-integr-lim}, by the Lebesgue Dominated Convergence Theorem we obtain\begin{equation}\label{est-integr-lim-2}
\begin{split}
& \iOT A\cdot \nabla (h_1(T_k(u))\xi) \,dx\,dt-\iOT F (h_1(T_k(u))\xi) \,dx\,dt\geq\frac{1}{{\et}} \iOT \partial_t\xi  \int_{u_0}^{u(t,x)} {h_1}(T_k(\s))\,d\s\  dx\,dt.
\end{split}
\end{equation}

Combining~\eqref{est-integr-lim} with~\eqref{est-integr-lim-2} we conclude
\begin{equation}\label{end}
\begin{split}
& \iOT A\cdot \nabla (h_1(T_k(u))\xi) \,dx\,dt-\iOT F (h_1(T_k(u))\xi) \,dx\,dt=\frac{1}{{\et}} \iOT \partial_t\xi  \int_{u_0}^{u(t,x)} {h_1}(T_k(\s))\,d\s\  dx\,dt.
\end{split}
\end{equation}
for all nondecreasing and Lipschitz $h_1:\R\to\R$ and for all nonnegative $\xi$.

We can replace $h_1(T_k(u))$ by  $-h_2(T_k(u))$ in~\eqref{end} and in turn we can also replace it by  $h(T_k(u))=h(u).$ Since $\xi=\xi^++\xi^-$, where $\xi^+,\xi^-\in \VTMi$ we get the claim.\end{proof}

\section{Comparison principle}\label{ssec:Comp}

\begin{proof}[Proof of Proposition~\ref{prop:comp-princ}] Let us define two-parameter family of functions $\bt:\R\to\R$ by  
\begin{equation*}\bt(s):=\left\{\begin{array}{ll}
1& \text{for }s\in[0,\tau],\\
\frac{-s+\tau+r}{r}& \text{for }s\in[\tau,\tau+r],\\
0& \text{for }s\in[\tau+r,T]
\end{array}\right.
\end{equation*}
with arbitrary $\tau\in (0,T)$ and sufficiently small $r>0$, such that $\tau+r<T$, one-parameter family of~functions $H_\delta:\R\to\R$ by  
\[H_\delta(s)=\left\{\begin{array}{ll}
0,& s\leq 0,\\
s/\delta,& s\in(0,\delta),\\
1,& s\geq \delta,
\end{array}\right.\] 
with $\delta\in(0,1)$ and sets
\[Q_T^\delta=\{(t,x): 0<T_{l+1}(v^1)-T_{l+1}(v^2)<\delta\},\]
\[Q_T^{\delta+}=\{(t,x):  T_{l+1}(v^1)-T_{l+1}(v^2)\geq \delta\}.\]

Using (R2) with $h(v^1)=\psi_l(v^1)$, $\xi=H_\delta(T_{l+1}(v^1)-T_{l+1}(v^2))\bt(t)$ and $h(v^2)=\psi_l(v^2)$, $\xi=H_\delta(T_{l+1}(v^1)-T_{l+1}(v^2))\bt(t)$ and subtract the second from the first we get
 \[\begin{split}
&D^{\delta,r,l,\tau}_1+D^{\delta,r,l,\tau}_2+D^{\delta,r,l,\tau}_3+D^{\delta,r,l,\tau}_4+D^{\delta,r,l,\tau}_5=\\
=&-\int_{\OT} \left(\int_{v^1_0}^{v^2_0} \psi_l(\sigma)d\sigma+\int_{v^2}^{v^1} \psi_l(\sigma)d\sigma\right) \partial_t (H_\delta( T_{l+1}(v^1)-T_{l+1}(v^2))) \bt(t)\, dx\,dt+\\
&+\iO\frac{1}{r}\int_\tau^{\tau+r}  \left(\int_{v^1_0}^{v^2_0} \psi_l(\sigma)d\sigma+\int_{v^2}^{v^1} \psi_l(\sigma)d\sigma\right) H_\delta ( T_{l+1}(v^1)-T_{l+1}(v^2))\,dt\, dx +\\
&+\int_{Q_T^\delta}\frac{1}{\delta}(A(x,\nabla v^1)(\psi_l(v^1)-\psi_l(v^2))  \nabla ((T_{l+1}(v^1)-T_{l+1}(v^2))\bt(t)) \,dx\,dt+\\
&+\int_{Q_T^\delta}\frac{1}{\delta}(A(x,\nabla v^1)-A(x,\nabla v^2))\psi_l(v^2)  \nabla ((T_{l+1}(v^1)-T_{l+1}(v^2))\bt(t)) \,dx\,dt+\\
&+\int_{Q_T^\delta\cup Q_T^{\delta+}} (A(x,\nabla v^1)\nabla v^1\psi_l'(v^1)-A(x,\nabla v^2)\nabla v^2\psi_l'(v^2))  H_\delta(T_{l+1}(v^1)-T_{l+1}(v^2)) \bt(t) \,dx\,dt=\\
=&\int_{Q_T^\delta\cup Q_T^{\delta+}} (f^1 \psi_l(v^1)-f^2 \psi_l(v^2)) H_\delta(T_{l+1}(v^1)-T_{l+1}(v^2)) \bt(t)\,dx\,dt=D^{\delta,r,l,\tau}_R.
\end{split}\] We observe that  
\[\begin{split}|D^{\delta,r,l,\tau}_1|\leq& \int_{Q_T^\delta} \left|\partial_t\left(\int_{v^1_0}^{v^2_0} \psi_l(\sigma)d\sigma+\int_{v^2}^{v^1} \psi_l(\sigma)d\sigma\right)   \frac{1}{\delta}( T_{l+1}(v^1)-T_{l+1}(v^2))  \bt(t)\right| dx\,dt+\\
&+\int_{Q_T^\delta}\left|\left(\int_{v^1_0}^{v^2_0} \psi_l(\sigma)d\sigma+\int_{v^2}^{v^1} \psi_l(\sigma)d\sigma\right)  \frac{1}{\delta}( T_{l+1}(v^1)-T_{l+1}(v^2)) \partial_t\bt(t)\right| dx\,dt=\\
=&\int_{Q_T^\delta} \left|\left(\psi_l(\partial_t (T_{l+1}(v^1))-\psi_l(\partial_t T_{l+1}(v^2)) \right)  
 \frac{1}{\delta}\cdot\delta  \right| dx\,dt+\\
&+\int_{Q_T^\delta}\left|\left(T_{l+1}(v^1_0)-T_{l+1}(v^2_0)+T_{l+1}(v^1)-T_{l+1}(v^2)\right)  \frac{1}{\delta}\cdot\delta\right|  dx\,dt\leq (2+4(l+1))|Q_T^\delta|.\end{split}\]
Hence,  the Dominated Convergence Theorem yields that $D^{\delta,r,l,\tau}_1\to 0$ when $\delta\to 0$. 
In the case of $D^{\delta,r,l,\tau}_2$, $D^{\delta,r,l,\tau}_5$, and $D^{\delta,r,l,\tau}_R$ it also suffices to apply the Dominated Convergence Theorem. Furthermore, the monotonicity of~truncations implies $D^{\delta,r,l,\tau}_3\geq 0$, whereas the monotonicity of $A$ implies $D^{\delta,r,l,\tau}_4\geq 0$. 
We erase nonnegative terms on the left-hand side and pass to the limit with $\delta\to 0$ in the remaining ones, getting
 \[\begin{split}
&\lim_{\delta\to 0}D^{\delta,r,l,\tau}_2+\lim_{\delta\to 0}D^{\delta,r,l,\tau}_5=D^{ r,l,\tau}_2+ D^{ r,l,\tau}_5=\\
=&\iO\frac{1}{r}\int_\tau^{\tau+r}  \left(\int_{v^1_0}^{v^2_0} \psi_l(\sigma)d\sigma+\int_{v^2}^{v^1} \psi_l(\sigma)d\sigma\right) \sg ( T_{l+1}(v^1)-T_{l+1}(v^2))\,dt\, dx +\\
&+\iOT (A(x,\nabla v^1)\nabla v^1\psi_l'(v^1)-A(x,\nabla v^2)\nabla v^2\psi_l'(v^2))  \sg(T_{l+1}(v^1)-T_{l+1}(v^2)) \bt(t) \,dx\,dt\leq\\
\leq&\iOT (f^1 \psi_l(v^1)-f^2 \psi_l(v^2)) \sg(T_{l+1}(v^1)-T_{l+1}(v^2)) \,dx\,dt=\lim_{\delta\to 0}D^{\delta,r,l,\tau}_R.
\end{split}\]

What is more, due to~\eqref{eq:contr:rad:n} and uniform boundedness of the rest expression in $D^{r,l,\tau}_5$, we infer that $\lim_{l\to\infty}D^{r,l,\tau}_5=0.$ The Monotone Convergence Theorem enables to pass with $l\to\infty$ also in $D^{r,l,\tau}_2$ and $D^{r,l,\tau}_5$. Consequently, we obtain
 \[ \iO\frac{1}{r}\int_\tau^{\tau+r}  \left( {v^2_0}-{v^1_0} +{v^1}-{v^2} \right) \sg ( v^1-v^2)\,dt\, dx \leq \iOT (f^1  -f^2  ) \sg( v^1 -v^2) \,dx\,dt.\]
 Since a.e. $\tau\in[0,T)$ is the Lebesgue point of the integrand on the left-hand side and we can pass with $r\to 0$. After rearranging terms it results in
\[\begin{split}&\iO(v^1(\tau,x)-v^2(\tau,x))\sg(v^1(\tau,x)-v^2(\tau,x))dx\leq\\
&\leq\iOT(f^1(t,x)-f^2(t,x))\sg(v^1(\tau,x)-v^2(\tau,x))dx\,dt\\
&+\iO(v_0^1(\tau,x)-v_0^2(\tau,x))\sg(v^1(\tau,x)-v^2(\tau,x))dx.\end{split},\]
for a.e. $\tau\in (0,T)$. Note that the left-hand side is nonnegative. Since $f^2\geq f^1$ and $v_0^2\geq v_0^1$, the right-hand side is nonpositive. Hence,  $\sg(v^1(\tau,x)-v^2(\tau,x))=0$ for a.e. $\tau\in (0,T)$ and~consequently $v^1 \leq v^2 $ a.e. in~$\OT$.\end{proof}
\bibliographystyle{plain}
\bibliography{PGISAZG-para-arxiv}

\def\ocirc#1{\ifmmode\setbox0=\hbox{$#1$}\dimen0=\ht0 \advance\dimen0
  by1pt\rlap{\hbox to\wd0{\hss\raise\dimen0
  \hbox{\hskip.2em$\scriptscriptstyle\circ$}\hss}}#1\else {\accent"17 #1}\fi}
  \def\cprime{$'$} \def\ocirc#1{\ifmmode\setbox0=\hbox{$#1$}\dimen0=\ht0
  \advance\dimen0 by1pt\rlap{\hbox to\wd0{\hss\raise\dimen0
  \hbox{\hskip.2em$\scriptscriptstyle\circ$}\hss}}#1\else {\accent"17 #1}\fi}
  \def\ocirc#1{\ifmmode\setbox0=\hbox{$#1$}\dimen0=\ht0 \advance\dimen0
  by1pt\rlap{\hbox to\wd0{\hss\raise\dimen0
  \hbox{\hskip.2em$\scriptscriptstyle\circ$}\hss}}#1\else {\accent"17 #1}\fi}
  \def\cprime{$'$}
\begin{thebibliography}{10}

\bibitem{ammar}
K.~Ammar and P.~Wittbold.
\newblock Existence of renormalized solutions of degenerate elliptic-parabolic
  problems.
\newblock {\em Proc. Roy. Soc. Edinburgh Sect. A}, 133(3):477--496, 2003.

\bibitem{BWZ}
M.~Bendahmane, P.~Wittbold, and A.~Zimmermann.
\newblock Renormalized solutions for a nonlinear parabolic equation with
  variable exponents and {$L^1$}-data.
\newblock {\em J. Differential Equations}, 249(6):1483--1515, 2010.

\bibitem{BenkiraneDouieb}
A.~Benkirane, J.~Douieb, and M.~Ould Mohamedhen~Val.
\newblock An approximation theorem in {M}usielak-{O}rlicz-{S}obolev spaces.
\newblock {\em Comment. Math.}, 51(1):109--120, 2011.

\bibitem{B}
D.~Blanchard.
\newblock Renormalized solutions for parabolic problems with {$L^1$} data.
\newblock In {\em Free boundary problems, theory and applications ({Z}akopane,
  1995)}, volume 363 of {\em Pitman Res. Notes Math. Ser.}, pages 177--185.
  Longman, Harlow, 1996.

\bibitem{BMR99}
D.~Blanchard, F.~Murat, and H.~Redwane.
\newblock Existence et unicit\'e de la solution renormalis\'ee d'un probl\`eme
  parabolique non lin\'eaire assez g\'en\'eral.
\newblock {\em C. R. Acad. Sci. Paris S\'er. I Math.}, 329(7):575--580, 1999.

\bibitem{BMR}
D.~Blanchard, F.~Murat, and H.~Redwane.
\newblock Existence and uniqueness of a renormalized solution for a fairly
  general class of nonlinear parabolic problems.
\newblock {\em J. Differential Equations}, 177(2):331--374, 2001.

\bibitem{BPR13}
D.~Blanchard, F.~Petitta, and H.~Redwane.
\newblock Renormalized solutions of nonlinear parabolic equations with diffuse
  measure data.
\newblock {\em Manuscripta Math.}, 141(3-4):601--635, 2013.

\bibitem{BR}
D.~Blanchard and H.~Redwane.
\newblock Solutions renormalis\'ees d'\'equations paraboliques \`a deux non
  lin\'earit\'es.
\newblock {\em C. R. Acad. Sci. Paris S\'er. I Math.}, 319(8):831--835, 1994.

\bibitem{BR98}
D.~Blanchard and H.~Redwane.
\newblock Renormalized solutions for a class of nonlinear evolution problems.
\newblock {\em J. Math. Pures Appl. (9)}, 77(2):117--151, 1998.

\bibitem{boc-ors1}
L.~Boccardo, A.~Dall'Aglio, T.~Gallou{\"e}t, and L.~Orsina.
\newblock Nonlinear parabolic equations with measure data.
\newblock {\em J. Funct. Anal.}, 147(1):237--258, 1997.

\bibitem{bgSOLA}
L.~Boccardo and T.~Gallou\"et.
\newblock Nonlinear elliptic and parabolic equations involving measure data.
\newblock {\em J. Funct. Anal.}, 87(1):149--169, 1989.

\bibitem{boc-g-d-m}
L.~Boccardo, D.~Giachetti, J.~I. Diaz, and F.~Murat.
\newblock Existence and regularity of renormalized solutions for some elliptic
  problems involving derivatives of nonlinear terms.
\newblock {\em J. Differential Equations}, 106(2):215--237, 1993.

\bibitem{BCM}
M.~Bul\'i\v{c}ek, L.~Consiglieri, and J.~M\'alek.
\newblock On solvability of a non-linear heat equation with a non-integrable
  convective term and data involving measures.
\newblock {\em Nonlinear Anal. Real World Appl.}, 12(1):571--591, 2011.

\bibitem{martin}
M.~Bul\'i\v{c}ek, P.~Gwiazda, M.~Kalousek, and A.~\'Swierczewska-Gwiazda.
\newblock Homogenization of nonlinear elliptic systems in nonreflexive
  {M}usielak--{O}rlicz spaces.
\newblock {\em ArXiv}, (1703.08355), 2017.

\bibitem{ch-o}
K.~Che\l{}mi\'nski and S.~Owczarek.
\newblock Renormalised solutions in thermo-visco-plasticity for a
  {N}orton-{H}off type model. {P}art {II}: the limit case.
\newblock {\em Nonlinear Anal. Real World Appl.}, 31:643--660, 2016.

\bibitem{min-double-reg}
M.~Colombo and G.~Mingione.
\newblock Regularity for double phase variational problems.
\newblock {\em Arch. Ration. Mech. Anal.}, 215(2):443--496, 2015.

\bibitem{DMOP}
G.~Dal~Maso, F.~Murat, L.~Orsina, and A.~Prignet.
\newblock Renormalized solutions of elliptic equations with general measure
  data.
\newblock {\em Ann. Scuola Norm. Sup. Pisa Cl. Sci. (4)}, 28(4):741--808, 1999.

\bibitem{dall}
A.~Dall'Aglio.
\newblock Approximated solutions of equations with {$L^1$} data. {A}pplication
  to the {$H$}-convergence of quasi-linear parabolic equations.
\newblock {\em Ann. Mat. Pura Appl. (4)}, 170:207--240, 1996.

\bibitem{diperna-lions}
R.~J. DiPerna and P.-L. Lions.
\newblock On the {C}auchy problem for {B}oltzmann equations: global existence
  and weak stability.
\newblock {\em Ann. of Math. (2)}, 130(2):321--366, 1989.

\bibitem{Donaldson}
T.~Donaldson.
\newblock Nonlinear elliptic boundary value problems in {O}rlicz-{S}obolev
  spaces.
\newblock {\em J. Differential Equations}, 10:507--528, 1971.

\bibitem{DP}
J.~Droniou, A.~Porretta, and A.~Prignet.
\newblock Parabolic capacity and soft measures for nonlinear equations.
\newblock {\em Potential Anal.}, 19(2):99--161, 2003.

\bibitem{DrP}
J.~Droniou and A.~Prignet.
\newblock Equivalence between entropy and renormalized solutions for parabolic
  equations with smooth measure data.
\newblock {\em NoDEA Nonlinear Differential Equations Appl.}, 14(1-2):181--205,
  2007.

\bibitem{ElMes}
A.~Elmahi and D.~Meskine.
\newblock Parabolic equations in {O}rlicz spaces.
\newblock {\em J. London Math. Soc. (2)}, 72(2):410--428, 2005.

\bibitem{F15}
J.~Fischer.
\newblock Global existence of renormalized solutions to entropy-dissipating
  reaction-diffusion systems.
\newblock {\em Arch. Ration. Mech. Anal.}, 218(1):553--587, 2015.

\bibitem{Gossez2}
J.-P. Gossez.
\newblock Nonlinear elliptic boundary value problems for equations with rapidly
  (or slowly) increasing coefficients.
\newblock {\em Trans. Amer. Math. Soc.}, 190:163--205, 1974.

\bibitem{Gossez3}
J.-P. Gossez.
\newblock Orlicz-{S}obolev spaces and nonlinear elliptic boundary value
  problems.
\newblock In {\em Nonlinear analysis, function spaces and applications ({P}roc.
  {S}pring {S}chool, {H}orni {B}radlo, 1978)}, pages 59--94. Teubner, Leipzig,
  1979.

\bibitem{Gossez}
J.-P. Gossez.
\newblock Some approximation properties in {O}rlicz-{S}obolev spaces.
\newblock {\em Studia Math.}, 74(1):17--24, 1982.

\bibitem{GMWK}
P.~Gwiazda, P.~Minakowski, and A.~Wr{\'o}blewska-Kami{\'n}ska.
\newblock Elliptic problems in generalized {O}rlicz-{M}usielak spaces.
\newblock {\em Cent. Eur. J. Math.}, 10(6):2019--2032, 2012.

\bibitem{pgisazg1}
P.~Gwiazda, I.~Skrzypczak, and A.~Zatorska-Goldstein.
\newblock Existence of renormalized solutions to elliptic equation in
  {M}usielak-{O}rlicz space.
\newblock {\em J. Differential Equations}, 264(1):341--377, 2018.

\bibitem{gwiazda-non-newt}
P.~Gwiazda and A.~{\'S}wierczewska-Gwiazda.
\newblock On non-{N}ewtonian fluids with a property of rapid thickening under
  different stimulus.
\newblock {\em Math. Models Methods Appl. Sci.}, 18(7):1073--1092, 2008.

\bibitem{gwiazda-tmna}
P.~Gwiazda and A.~{\'S}wierczewska-Gwiazda.
\newblock On steady non-{N}ewtonian fluids with growth conditions in
  generalized {O}rlicz spaces.
\newblock {\em Topol. Methods Nonlinear Anal.}, 32(1):103--113, 2008.

\bibitem{Gparabolic}
P.~Gwiazda and A.~{\'S}wierczewska-Gwiazda.
\newblock Parabolic equations in anisotropic {O}rlicz spaces with general
  {$N$}-functions.
\newblock In {\em Parabolic problems}, volume~80 of {\em Progr. Nonlinear
  Differential Equations Appl.}, pages 301--311. Birkh\"auser/Springer Basel
  AG, Basel, 2011.

\bibitem{gwiazda2}
P.~Gwiazda, A.~{\'S}wierczewska-Gwiazda, and A.~Wr{\'o}blewska.
\newblock Monotonicity methods in generalized {O}rlicz spaces for a class of
  non-{N}ewtonian fluids.
\newblock {\em Math. Methods Appl. Sci.}, 33(2):125--137, 2010.

\bibitem{gwiazda-ren-ell}
P.~Gwiazda, P.~Wittbold, A.~Wr{\'o}blewska, and A.~Zimmermann.
\newblock Renormalized solutions of nonlinear elliptic problems in generalized
  {O}rlicz spaces.
\newblock {\em J. Differential Equations}, 253(2):635--666, 2012.

\bibitem{gwiazda-ren-cor}
P.~Gwiazda, P.~Wittbold, A.~Wr{\'o}blewska-Kami{\'n}ska, and A.~Zimmermann.
\newblock Corrigendum to ``{R}enormalized solutions of nonlinear elliptic
  problems in generalized {O}rlicz spaces'' [{J}. {D}ifferential {E}quations
  253 (2) (2012) 635--666] 
\newblock {\em J. Differential Equations}, 253(9):2734--2738, 2012.

\bibitem{gwiazda-ren-para}
P.~Gwiazda, P.~Wittbold, A.~Wr{\'o}blewska-Kami{\'n}ska, and A.~Zimmermann.
\newblock Renormalized solutions to nonlinear parabolic problems in generalized
  {M}usielak-{O}rlicz spaces.
\newblock {\em Nonlinear Anal.}, 129:1--36, 2015.

\bibitem{HMR}
S.~Hadj~Nassar, H.~Moussa, and M.~Rhoudaf.
\newblock Renormalized solution for a nonlinear parabolic problems with
  noncoercivity in divergence form in {O}rlicz spaces.
\newblock {\em Appl. Math. Comput.}, 249:253--264, 2014.

\bibitem{hhk}
P.~Harjulehto, P.~H\"ast\"o, and R.~Kl\'en.
\newblock Generalized {O}rlicz spaces and related {PDE}.
\newblock {\em Nonlinear Anal.}, 143:155--173, 2016.

\bibitem{hht}
P.~Harjulehto, P.~H\"ast\"o, and O.~Toivanen.
\newblock H\"older regularity of quasiminimizers under generalized growth
  conditions.
\newblock {\em Calc. Var. Partial Differential Equations}, 56(2):56:22, 2017.

\bibitem{Landes-meth}
R.~Landes.
\newblock On the existence of weak solutions for quasilinear parabolic
  initial-boundary value problems.
\newblock {\em Proc. Roy. Soc. Edinburgh Sect. A}, 89(3-4):217--237, 1981.

\bibitem{LG}
Z.~Li and W.~Gao.
\newblock Existence of renormalized solutions to a nonlinear parabolic equation
  in {$L^1$} setting with nonstandard growth condition and gradient term.
\newblock {\em Math. Methods Appl. Sci.}, 38(14):3043--3062, 2015.

\bibitem{MMR17}
M.~Mabdaoui, H.~Moussa, and M.~Rhoudaf.
\newblock Entropy solutions for a nonlinear parabolic problems with lower order
  term in {O}rlicz spaces.
\newblock {\em Anal. Math. Phys.}, 7(1):47--76, 2017.

\bibitem{mmos:ap}
F.-Y. Maeda, Y.~Mizuta, T.~Ohno, and T.~Shimomura.
\newblock Approximate identities and {Y}oung type inequalities in
  {M}usielak-{O}rlicz spaces.
\newblock {\em Czechoslovak Math. J.}, 63(138)(4):933--948, 2013.

\bibitem{mmos2013}
F.-Y. Maeda, Y.~Mizuta, T.~Ohno, and T.~Shimomura.
\newblock Boundedness of maximal operators and {S}obolev's inequality on
  {M}usielak-{O}rlicz-{M}orrey spaces.
\newblock {\em Bull. Sci. Math.}, 137(1):76--96, 2013.

\bibitem{murat}
F.~Murat.
\newblock Soluciones renormalizadas de edp elipticas no lineales.
\newblock {\em Publ. Laboratoire d'Analyse Num\'{e}rique, Univ. Paris 6, R
  93023}, 1993.

\bibitem{Musielak}
J.~Musielak.
\newblock {\em Orlicz spaces and modular spaces}, volume 1034 of {\em Lecture
  Notes in Mathematics}.
\newblock Springer-Verlag, Berlin, 1983.

\bibitem{Mustonen}
V.~Mustonen and M.~Tienari.
\newblock On monotone-like mappings in {O}rlicz-{S}obolev spaces.
\newblock {\em Math. Bohem.}, 124(2-3):255--271, 1999.

\bibitem{Novotny}
A.~Novotn{\'y} and I.~Stra{\v{s}}kraba.
\newblock {\em Introduction to the mathematical theory of compressible flow},
  volume~27 of {\em Oxford Lecture Series in Mathematics and its Applications}.
\newblock Oxford University Press, Oxford, 2004.

\bibitem{pedr}
P.~Pedregal.
\newblock {\em Parametrized measures and variational principles}.
\newblock Progress in Nonlinear Differential Equations and their Applications,
  30. Birkh\"auser Verlag, Basel, 1997.

\bibitem{P}
F.~Petitta.
\newblock Renormalized solutions of nonlinear parabolic equations with general
  measure data.
\newblock {\em Ann. Mat. Pura Appl. (4)}, 187(4):563--604, 2008.

\bibitem{PPP}
F.~Petitta, A.~C. Ponce, and A.~Porretta.
\newblock Diffuse measures and nonlinear parabolic equations.
\newblock {\em J. Evol. Equ.}, 11(4):861--905, 2011.

\bibitem{rao-ren}
M.~M. Rao and Z.~D. Ren.
\newblock {\em Theory of {O}rlicz spaces}, volume 146 of {\em Monographs and
  Textbooks in Pure and Applied Mathematics}.
\newblock Marcel Dekker, Inc., New York, 1991.

\bibitem{R10}
H.~Redwane.
\newblock Existence results for a class of nonlinear parabolic equations in
  {O}rlicz spaces.
\newblock {\em Electron. J. Qual. Theory Differ. Equ.}, pages No. 2, 19, 2010.

\bibitem{Sk1}
M.~S. Skaff.
\newblock Vector valued {O}rlicz spaces generalized {$N$}-functions. {I}.
\newblock {\em Pacific J. Math.}, 28:193--206, 1969.

\bibitem{Sk2}
M.~S. Skaff.
\newblock Vector valued {O}rlicz spaces generalized {$N$}-functions. {II}.
\newblock {\em Pacific J. Math.}, 28:413--430, 1969.

\bibitem{ASGcoll}
A.~\'Swierczewska-Gwiazda.
\newblock Anisotropic parabolic problems with slowly or rapidly growing terms.
\newblock {\em Colloq. Math.}, 134(1):113--130, 2014.

\bibitem{Aneta}
A.~Wr{\'o}blewska.
\newblock Steady flow of non-{N}ewtonian fluids---monotonicity methods in
  generalized {O}rlicz spaces.
\newblock {\em Nonlinear Anal.}, 72(11):4136--4147, 2010.

\bibitem{ZZ}
C.~Zhang and S.~Zhou.
\newblock Renormalized and entropy solutions for nonlinear parabolic equations
  with variable exponents and {$L^1$} data.
\newblock {\em J. Differential Equations}, 248(6):1376--1400, 2010.

\bibitem{zhang17}
C.~Zhang and S.~Zhou.
\newblock The well-posedness of renormalized solutions for a non-uniformly
  parabolic equation.
\newblock {\em Proc. Amer. Math. Soc.}, 145(6):2577--2589, 2017.

\end{thebibliography}
\end{document}